\newcommand{\Z}{\mathbb{Z}}
\newcommand{\R}{\mathbb{R}}
\newcommand{\Hom}{\operatorname{Hom}}
\newcommand{\Map}{\operatorname{Map}}
\newcommand{\Nat}{\operatorname{Nat}}
\newcommand{\Fun}{\operatorname{Fun}}
\newcommand{\iso}{\overset{\sim}{\longrightarrow}}
\renewcommand{\=}{\cong}
\newcommand{\colim}{\operatorname{\underrightarrow{colim}}}
\newcommand{\Gcolim}{G\operatorname{-\underrightarrow{colim}}}
\newcommand{\Psh}{\ul{\operatorname{Psh}}_G}
\newcommand{\xto}{\xrightarrow}
\newcommand{\set}[1]{\left\{{#1}\right\}}
\newcommand{\B}{\mathscr{B}}
\newcommand{\C}{\mathscr{C}}
\newcommand{\D}{\mathscr{D}}
\newcommand{\E}{\mathscr{E}}
\newcommand{\EE}{\mathbb{E}}
\newcommand{\G}{\mathscr{G}}
\newcommand{\Sp}{\mathbf{Sp}}
\newcommand{\Ss}{\mathscr{S}}
\newcommand{\Top}{\mathbf{Top}}
\newcommand{\GTop}{\mathbf{Top}^G}
\newcommand{\adj}{\leftrightarrows}
\newcommand{\Cat}{\mathscr{C}\mathrm{at}}
\newcommand{\ul}[1]{\underline{#1}}
\newcommand{\ol}[1]{\overline{#1}}
\newcommand{\ulTopG}{\ul{\mathbf{Top}}^G}
\newcommand{\OG}{\mathcal{O}_G}
\newcommand{\OGop}{{\OG^{op}}}
\newcommand{\GFin}{\ul{\mathbf{Fin}}^G}
\newcommand{\mfld}{\mathbf{Mfld}}
\newcommand{\ulmfld}{\ul{\mathbf{Mfld}}}
\newcommand{\mfldGD}{\mfld^{G,\sqcup}}
\newcommand{\GmfldD}{\ul{\mfld}^{G,\sqcup}}
\newcommand{\disk}{\mathbf{Disk}}
\newcommand{\orb}{\operatorname{Orbit}}
\newcommand{\Fin}{\mathbf{Fin}}
\newcommand{\ulFin}{\ul{\Fin}}
\newcommand{\ulFun}{\ul{\Fun}}
\newcommand{\ultimes}{\ul{\times}}
\newcommand{\HF}{H\underline{\mathbb{F}}}
\newcommand{\HZ}{H\underline{\mathbb{Z}}}
\newcommand{\Pic}{\ul{\operatorname{Pic}}}
\renewcommand{\B}{\Pic(\ul\Sp^G)}
\newcommand{\Th}{\mathbf{Th}}
\newcommand{\Alg}{\operatorname{Alg}}
\newcommand{\CAlg}{\operatorname{CAlg}}
\newcommand{\CMon}{\operatorname{CMon}}
\newcommand{\fgt}{\mathit{fgt}}
\newcommand{\THR}{\operatorname{THR}}
\newcommand{\THH}{\operatorname{THH}}
\numberwithin{equation}{subsection}
\newtheorem{thm}[equation]{Theorem}
\newtheorem{prop}[equation]{Proposition}
\newtheorem{cor}[equation]{Corollary}
\newtheorem{lem}[equation]{Lemma}
\newtheorem{mydef}[equation]{Definition}
\theoremstyle{definition}
\newtheorem{rem}[equation]{Remark}
\newtheorem{ex}[equation]{Example}
\newtheorem{notation}[equation]{Notation}
\newtheorem{construction}[equation]{Construction}
\newcommand{\cof}{\hookrightarrow}
\newcommand{\fib}{\twoheadrightarrow}
\newcommand{\pullbackcorner}[1][dr]{\save*!/#1-1.2pc/#1:(1,-1)@^{|-}\restore}
\newcommand{\pbcorner}{\arrow[dr, phantom, "\ulcorner" description, very near start]}
\newcommand{\myemph}{\textit}
\begin{document}

\title{Equivariant nonabelian Poincar\'e duality and equivariant factorization homology of Thom spectra}

\author{Asaf Horev, Inbar Klang, and Foling Zou \\
Appendix by Jeremy Hahn and Dylan Wilson}
\date{\today}

\maketitle

\begin{abstract}
In this paper, we study genuine equivariant factorization homology and its interaction with equivariant Thom spectra, which we construct using the language of parametrized higher category theory. We describe the genuine equivariant factorization homology of Thom spectra, and use this description to compute several examples of interest. A key ingredient for our computations is an equivariant non-abelian Poincar\'e duality theorem, in which we prove that factorization homology with coefficients in a $G$-space is given by a mapping space. We compute the Real topological Hochschild homology ($\THR$) of the Real bordism spectrum $MU_\mathbb{R}$ and of the equivariant Eilenberg--MacLane spectra $\HF_2$ and $\HZ_{(2)}$, as well as factorization homology of the sphere $S^{2\sigma}$ with coefficients in these Eilenberg--MacLane spectra. In Appendix B, Jeremy Hahn and Dylan Wilson compute $\THR(\HZ)$. 
\end{abstract}

\tableofcontents

\section{Introduction}

In this paper, we study the equivariant factorization homology of Thom
spectra. Factorization homology has emerged as a fruitful topic of research in
recent years; its roots lie in the study of configuration spaces and their
relation to mapping spaces, but it has also proven valuable in studying
topological field theories, and as a unified way to treat Hochschild homology
theories. Here we primarily take the axiomatic perspective on factorization
homology, introduced by Ayala--Francis \cite{AF}. Ayala--Francis describe
factorization homology with coefficients in an $\EE_n$-algebra $A$, $\int_{-}
A$, as a ``homology theory'' for $n$-manifolds: it satisfies a variation
of the Eilenberg--Steenrod axioms, including functoriality and excision, and is determined by these axioms.

In the case $n=1$, $\int_{S^1}A$ agrees with Hochschild homology of a ring
$A$. Furthermore, If $A$ is a commutative ring spectrum, the factorization
homology $\int_M A$ agrees with the Loday construction \cite{Pirashvili}, which
gives higher Hochschild homology for $M = S^n$ and iterated Hochschild homology
for $M = \mathbb{T}^n$. As such, it is reasonable to expect that factorization
homology might be of use in understanding invariants related to algebraic
K-theory, and recently, Ayala--Mazel-Gee--Rozenblyum used factorization homology
to obtain a new description of the cyclotomic trace from K-theory to topological
cyclic homology (see, e.g., \cite{AMR_GeometryCycTrace}.)

\medskip
In this paper, we consider genuine equivariant factorization homology,
introduced by the first named author \cite{GFH} in his thesis; other definitions
of equivariant factorization homology have also been introduced independently by
Weelinck \cite{Weelinck} and by the third named author \cite{Zou}. The paper
\cite{GFH} uses parametrized higher category theory to define equivariant
factorization homology axiomatically as a homology theory for $G$-manifolds,
where $G$ is a finite group. We consider equivariant factorization homology for
$V$-framed $G$-manifolds, where $V$ is a finite dimensional
$G$-representation. In this theory, the coefficients are $\EE_V$-algebras.
For 1-dimensional manifolds, this construction recovers Real topological
Hochschild homology of \cite{Real_THH} and $C_n$-relative topological Hochschild
homology of \cite{TC_via_the_norm}. Therefore, equivariant factorization
homology provides a new perspective from which to study these invariants, and
other invariants of equivariant ring spectra.

\medskip

A particularly nice class of structured ring spectra is given by Thom spectra,
for which properties of the base space of a spherical fibration can be used to
deduce spectrum-level results. Lewis \cite{lewis1986equivariant} showed that the
Thom spectrum of an $n$-fold loop map is an $\EE_n$-ring
spectrum. Blumberg--Cohen--Schlichtkrull \cite{BCS} showed that the Thom spectrum
functor respects the cyclic bar construction, and used this to describe the
topological Hochschild homology of Thom spectra and to compute several
examples. Schlichtkrull \cite{Sch} generalized this to higher Hochschild
homology of commutative ring spectra. The second named author
\cite{KlangFactorizarionHomologyThom} showed that the Thom spectrum functor
respects factorization homology, and used this to describe the factorization
homology and $\EE_n$ topological Hochschild cohomology of Thom spectra and to
compute examples.

\medskip

In this paper, we apply this philosophy to equivariant Thom spectra and equivariant factorization
homology. A point-set model of the equivariant Thom spectrum functor was constructed by Lewis and May in
\cite[X.3.1]{lewis1986equivariant}. We give an alternative construction of it, reminiscent of the Thom spectrum construction of
Ando--Blumberg--Gepner--Hopkins--Rezk \cite{ABGHR}, as a colimit in parametrized higher
categories. We then prove the properties of this equivariant Thom spectrum functor that govern its interaction with equivariant factorization homology. Here $\B$ denotes the Picard $G$-space of the $G$-category of $G$-spectra; at the orbit $G/H$, its value is the $H$-space $\operatorname{Pic}(\Sp^H)$.
  
  \paragraph{Theorem 1.}  The $G$-Thom spectra functor $\Th$ has
  the following properties:
\begin{enumerate}[(1)]
\item (\cref{Th_of_representables}) Let  $E\in \Sp^G$ be an invertible genuine $G$-spectrum and  $e \colon X \to \B$ be a
  $G$-map  from a $G$-space $X$ such that $e$ is $G$-homotopic to the constant map with value $E$.
  Then $\Th(e) \simeq E\otimes \Sigma^{\infty}_+X\in \Sp^G$.
\item  (\cref{thm:G_Thom_as_GSM_functor}) It is $G$-symmetric monoidal.
\end{enumerate}

The equivariant Thom spectrum functor (e.g. of Lewis--May) is known to be symmetric monoidal; however, to our knowledge, the $G$-symmetric monoidal property has not yet appeared elsewhere in the literature. This property ensures that $\Th$ commutes with Hill--Hopkins--Ravenel norms and preserves $G$-operadic algebra structures:

\paragraph{Corollary 2.}
\begin{enumerate}[(1)]
\item  Let $X$ be an $H$-space and $f: X \to \operatorname{Pic}(\Sp^H)$ be an
  $H$-map. Then $\mathrm{Th}(\mathrm{N}_H^Gf) \simeq \mathrm{N}_H^G
  \mathrm{Th}(f)$, where
  $$\mathrm{N}_H^Gf: \mathrm{N}_H^GX \to \mathrm{N}_H^G\operatorname{Pic}(\Sp^H)
  \to \operatorname{Pic}(\Sp^G).$$
\item  Let $X$ be a $G$-space and $\Omega^Vf: \Omega^VX \to
  \operatorname{Pic}(\Sp^G)$ be a $V$-fold loop $G$-map. Then
  $\mathrm{Th}(\Omega^Vf)$ is an $\EE_V$-algebra spectrum.
\end{enumerate}

These properties ensure
that $\Th$ respects equivariant factorization homology: 

\paragraph{Theorem 3. } (\cref{thm-fh-thom}) Let $X$ be a pointed $G$-space and $\Omega^V f: \Omega^V X \to \B$ be a map of $\EE_V$-algebras.
  Then for every $V$-framed $G$-manifold $M$, the genuine $G$-factorization homology $\int_M \Th(\Omega^V f)$ is equivalent to the genuine $G$-spectrum given by 
  \begin{align*}
    \Th \left( \int_M \Omega^V X \xto{f_*} \int_M \B \to \B \right) ,
  \end{align*}
  where $\Th \colon \ulTopG_{/\B} \to \ul\Sp^G$ is the Thom $G$-functor we construct. 
  
 \medskip
Thus we can leverage knowledge of equivariant
factorization homology on the level of spaces to determine equivariant factorization homology of
Thom spectra. This is made particularly useful by the equivariant non-abelian Poincar\'e duality
theorem of the third named author \cite{Zou}, which we improve here in the axiomatic context:

\paragraph{Theorem 4.}(\cref{thm:NPD}) For $M$ a $V$-framed $G$-manifold and $X$ a pointed
$G$-space such that $\pi_k(X^H) = 0$ for all subgroups $H \subset G$ and $k < \mathrm{dim}(V^H)$, there is a natural equivalence of $G$-spaces
$$\int_M \Omega^{V}X \simeq \Map_{*}(M^+, X).$$
Here, $M^+$ is the one-point compatification of $M$; $\Map_{*}$ is the $G$-space
of based non-equivariant maps with $G$ acting by conjugation.
\medskip

This theorem generalizes the non-abelian Poincar\'e duality theorem of Salvatore
\cite[Theorem 6.6]{Salvatore2001configuration}, Lurie \cite[5.5.6.6]{HA}, and
Ayala--Francis \cite[Corollary 4.6]{AF}. It describes equivariant factorization homology of an equivariant algebra in the category of $G$-spaces
as a compactly supported mapping space. From our equivariant non-abelian Poincar\'e duality theorem, we recover equivariant Atiyah duality and equivariant Poincar\'e duality for $V$-framed $G$-manifolds.
(See \cref{sec:NAPD}.)
\medskip

We use our structural results on the equivariant Thom
spectrum functor, along with the equivariant non-abelian Poincar\'e duality
theorem, to make several computations of interest.
\paragraph{Theorem 5.} (\cref{thm-comp}) Let  $V$ and $W$ be finite
  dimensional $G$-representations and $A$ be the $G$-Thom spectrum of an $\EE_{V \oplus W}$-map,
   $$\Omega^{V \oplus W} f: \Omega^{V \oplus W}X \to \Pic(\ul\Sp^G),$$ and let
 $M$ be a smooth $G$-manifold such that $M \times W$ equivariantly embeds in $V \times W$.
   With some assumptions on $X$ and $M$,
$$\int_{M \times W} A \simeq A \otimes \Sigma^\infty_+ \text{(an explicit
  mapping space)}.$$
Using the theorem, we compute the factorization homology of representation
spheres with coefficients in the Real bordism spectrum $MU_\mathbb{R}$, the Real
topological Hochschild homology of $\HF_2$ and $\HZ_{(2)}$, and the equivariant
factorization homology of the representation spheres $S^{2\sigma}$ with
coefficients in $\HF_2$ and $\HZ_{(2)}$. For example, we recover $\THR(\HF_2)$:
\begin{equation*}
\THR(\HF_2) \simeq \bigoplus_{k \ge 0} \Sigma^{k\rho} \HF_2.
\end{equation*}
Appendix B, written by Jeremy Hahn
and Dylan Wilson, uses these theorems to compute the Real topological Hochschild
homology of $\HZ$:
\paragraph{Theorem 6.}(\cref{app:main-thm})
There is an equivalence of $\HZ$-module spectra
$$\THR(\HZ) \simeq \HZ \oplus \bigoplus_{k \ge 2} \Sigma^{k\rho-1} H\underline{\mathbb{Z}/k}.$$

\medskip

The computations in this paper rely on the two main theorems quoted above: equivariant non-abelian Poincar\'e duality (\cref{thm:NPD}) and the behavior of the Thom spectrum functor under equivariant factorization homology (\cref{thm-fh-thom}). The reader mainly interested in computations can keep these theorems in mind while focusing on \cref{sec:computation} and Appendix B.

\paragraph{Structure of the paper.}
In \cref{sec:topo-func}, we give preliminaries on $G$-symmetric monoidal $G$-$\infty$-categories. We also review the genuine operadic nerve
  construction, which constructs a $G$-symmetric monoidal $G$-$\infty$-category from $G$-objects in a symmetric  monoidal topological category.
In \cref{sec:GFH}, we review the construction of genuine equivariant
factorization homology and its properties. In \cref{sec:NAPD}, we prove the
equivariant nonabelian Poincar\'e duality theorem, and recover equivariant
Atiyah duality for $V$-framed $G$-manifolds. In \cref{sec:G_Thom_spectra}, we
define the $G$-Thom spectrum functor, with preliminaries on parametrized
$\infty$-category theory given in \cref{sec:Parametrized_prelim},
and show that our $G$-Thom spectrum functor respects equivariant factorization homology. In \cref{sec:v-fold-loop}, we explain how the Thom spectrum of a $V$-fold loop map gives rise to an  $\EE_V$-algebra, and give a description of the equivariant factorization homology of the $G$-Thom spectrum of a $V$-fold loop map. In \cref{sec:computation}, we give computations of the equivariant factorization homology of certain Thom spectra using results from the previous sections. Appendix B, by Jeremy Hahn and Dylan Wilson, gives a computation of $\THR(\HZ)$.

\paragraph{Foundations.}
We take an eclectic viewpoint on the foundations. When dealing with
  manifolds and in \cref{sec:NAPD}, we work with topological categories. For the
  rest, we
use Joyal's quasi-categories as a theory of $\infty$-categories, developed in
\cite{HTT} and \cite{HA}.
We make extensive use of the theory of parametrized-$\infty$-categories of
Barwick--Dotto--Glasman--Nardin--Shah, developed in \cite{GCats_intro},
\cite{Expose1}, \cite{Expose2}, \cite{Expose4}, \cite{Shah_thesis},
\cite{Nardin_thesis}. \Cref{sec:prelim} is dedicated to an exposition of the
language.

\paragraph{Notation.}

We use underlines to indicate parametrized notions and constructions.
For example, we write $\ulTopG$ for the $G$-$\infty$-category of $G$-spaces and $\ulFin^G_*$ for the $G$-$\infty$-category of finite based $G$-spaces.
Following \cite{HA}, we write $\Sp$ for the $\infty$-category of spectra, and use the symbol $\otimes$ to denote the smash product symmetric monoidal structure on $\Sp$.
We use the notation $E \otimes \Sigma^\infty_+ X$ or $E \otimes X$ for the smash product of a spectrum $E$ and a space $X$ (exhibiting $\Sp$ as tensored over spaces). We denote the wedge product of spectra by $\oplus$.
We write $\Top^G$ for the $\infty$-category  of
$G$-spaces and $G$-equivariant maps; write $\Sp^G$ for the $\infty$-category of genuine
$G$-spectra and $G$-equivariant maps, and denote smash products of genuine $G$-spectra by $\otimes$.
Finally, we denote the smash product of $E\in \Sp^G$ and $X\in \Top^G$ by $E \otimes \Sigma^\infty_+ X$ or $E \otimes X$.

\paragraph{Acknowledgments.} We would like to thank Jeremy Hahn and Dylan Wilson for their contribution to this paper, as well as for many helpful conversations on these topics. We would also like to thank Peter Bonventre for sharing a draft of his paper, Mike Hill for helpful remarks on Snaith splittings, and Mona Merling for an illuminating discussion during an early part of this project. The authors would like to thank the Isaac Newton Institute for Mathematical Sciences for support and hospitality during the programme ``Homotopy harnessing higher structures'', when work on this project was started. 
This work was supported by EPSRC grant number EP/R014604/1.
The first author acknowledges support by ERC-2017-STG 75908 to D. Petersen, and 
by ISF grant 87590021 to I. Dan-Cohen.

\section{From topological functors to G-symmetric monoidal functors}
\label{sec:topo-func}
In \cref{sec:prelim}, we state some preliminaries on $G$-$\infty$-categories
  and $G$-symmetric monoidal structures, which give correct $\infty$-categorical
  language for working with $G$-spaces and genuine $G$-$\EE_{\infty}$ ring spectra.
In \cref{app:top_G_objs}, we review the genuine operadic nerve construction of
\cite{Bonventre}, for the case of a symmetric monoidal topological category
(considered as a topological operad with many objects). Applying the construction to
$\mfld_n$,  we get another equivalent definition of the $G$-symmetric monoidal $G$-$\infty$-category
$\GmfldD$. 
In \cref{sec:one-point}, we use the functoriality of the operadic nerve construction to produce a $G$-$\infty$-categorical version of the one-point compactification functor $(-)^+ \colon \mfld_n \to \Top_*$.
The resulting $G$-symmetric monoidal functor $  (-)^+ \colon \GmfldD \to (\ul{\Top}^{G,\vee}_*)^{vop}$ will play a central role in expressing the right hand side of
\cref{thm:NPD} as a $G$-symmetric monoidal functor.

\subsection{Preliminaries on $G$-$\infty$-categories}
\label{sec:prelim}
We use $\OG$ to denote the (discrete) $\infty$-category of $G$-orbits, i.e. transitive
$G$-sets and $G$-maps.
We have the $\infty$-category of $G$-spectra, denoted as $\Sp^G$. For the purpose of describing
the genuine $\EE_{\infty}$-ring structure, we must consider a $G$-spectrum with all
its restrictions to subgroups $H \subset G$. Therefore, it is convenient to consider
the $G$-$\infty$-category $\ul{\Sp}^G$.
\begin{mydef}
  A $G$-$\infty$-category is a coCartesian fibration (as in \cite[def. 2.4.2.1]{HTT}) over the $\infty$-category $\OGop$.
\end{mydef}
\noindent Roughly speaking, this is a diagram of $\infty$-categories $\OGop \to
\Cat_{\infty}$. It is a special case of a parametrized $T$-$\infty$-category as
in \cref{sec:Parametrized_prelim}. 
\begin{notation}
\label{notn:fibC}  For a $G$-$\infty$-category $\ul\C$ and $O\in \OGop$, let $\ul\C_{[O]}$ be the fiber of
  $\ul\C \fib \OGop$ over $O$. 
\end{notation}
\begin{ex}(\cite[Example 1.4]{Nardin_thesis})
  Let $\Top^G$ be the $\infty$-category of $G$-spaces and $G$-maps.
  The $G$-$\infty$-category $\ul{\Top}^G$ (which is denoted $\ul{\Top}_G$ in the reference)
  has the following datum:
\begin{itemize}
\item For each $G/H \in \OGop$, an $\infty$-category $\ul{\Top}^G_{[G/H]} \simeq \Top^H$;
\item For each $\varphi: G/K \to G/H$, coCartesian lifts along $\varphi$ induce maps
  $$\varphi^{*}: \ul{\Top}^G_{[G/H]} \simeq \Top^H  \to \ul{\Top}^G_{[G/K]} \simeq \Top^K$$ given by
  restrictions of group actions and conjugations.
\end{itemize} And similarly for the $G$-$\infty$-category of $G$-spectra, $\ul{\Sp}^G$.
\end{ex}

  \begin{rem}
    Explicitly, one takes $\ul{\Top}^G_{[G/H]} = \Top^G/(G/H)$. That is, an
    object is $(X,f)$ where $X$ is a $G$-space and $f: X \to G/H$ is a
    $G$-map; a morphism from $(X,f)$ to $(X',f')$ is a $G$-equivariant
      map $X \to X'$ over $G/H$.
 
\begin{itemize}
\item    Then $X_0 = f^{-1}(eH)$ is a $H$-space and $X \cong G \times_H X_0$;
\item  $\Map_{\ul{\Top}^G_{[G/H]}}(G\times_HX_0, G\times_HX_0') \cong
      \Map_H(X_0,X_0')$;
\item For a projection $\varphi: G/K \to G/H$, $\varphi^{*}(X,f)$
    is the pullback, and there is $\varphi^{*}X \cong G \times_K X_0$
    (via a shearing isomorphism),
    so $(\varphi^{*}f)^{-1}(eK) = \mathrm{res}^H_K X_0$.
\end{itemize}  \end{rem}

  \begin{rem}
    Suppose $X, Y \in \ul\Top^G_{[G/G]}$ are $G$-spaces. Then $\Map_{\ul{\Top}^G_{[G/G]}}(X,Y)$ is the
    space of $G$-equivariant maps. The projection $p:G/e \to G/G$ gives
    $p^{*}X = G/e \times X \cong G \times_e X
    \in\ul\Top^G_{[G/e]}$. Then $\Map_{\ul\Top^G_{[G/e]}}(p^{*}X, p^{*}Y) \cong
    \Map_{\mathrm{id}_{G/e}}(G\times_eX , G\times_e Y)$ is the space of
    non-equivariant maps from $X$ to $Y$. Conjugation by $\varphi: G/e \to G/e$ endows
    a $G$-action on this mapping space. This matches with the conjugation
    $G$-action on $\Map_e(X,Y)$ because of the shearing isomorphism.
  \end{rem}

We use $\GFin_*$ to denote the (discrete) $G$-$\infty$-category of finite pointed $G$-sets
(see \cite[def. 2.14]{Nardin_thesis}). Explicitly, we have that:
\begin{itemize}
\item An object of $\GFin_*$ is given by a map of finite $G$-sets $U \to O$ where
  $O$ is an orbit.
\item A morphism $\psi:I_0 \to I_1$ in $\GFin_*$ from $I_0=(U_0 \to O_0)$ to $I_1=(U_1 \to O_1)$ is given by a span of arrows (a diagram of $G$-sets) of the form 
\begin{align} \label{eq:psi_diagram}
  \xymatrix{
    U_0 \ar[d] & U_0' \ar[d] \ar[l]_{f} \ar[r]^{p} & U_1 \ar[d] \\
    O_0 & O_1 \ar[r]^{=} \ar[l]_{\varphi} \ar[r]^{=} & O_1
  }
\end{align}
where the induced map $U_0' \to \varphi^* U_0$ is injective (or equivalently
there exists another $G$-set $U''_0$ with a $G$-map $U''_0 \to \varphi^* U_0$
that induces an isomorphism $  U'_0 \coprod U''_0 \xto{\=} \varphi^* U_0$).
\item Compositions of morphisms are given by pullbacks, details of which we defer
  to  \cref{construction:topological_G_objects}.
\end{itemize}
Note that there is an inclusion

  $\OGop \to \GFin_*$ by sending $O$ to $(O = O)$ and sending $O_0
  \overset{\varphi}{ \longleftarrow } O_1$ to
  \begin{align*} 
  \xymatrix{
    O_0 \ar[d]^{=} & O_1 \ar[d]^{=} \ar[l]_{\varphi} \ar[r]^{=} & O_1 \ar[d]^{=} \\
    O_0 & O_1 \ar[r]^{=} \ar[l]_{\varphi} \ar[r]^{=} & O_1.
  }
\end{align*}
\begin{ex}
  For $G=e$, $\ul{\Fin}_{*}^e \cong \Fin_{*}$. The isomorphism sends $I = (U \to *)$
  to $U \amalg \{0\}$; a morphism given as \eqref{eq:psi_diagram} uniquely determines a
  morphism $U_0 \amalg \{0\} \to U_1  \amalg \{0\}$ by sending $U_0' \subset U_0$ 
  to $U_1$ via $f$ and sending $U_0'' \amalg \{0\}$  to $0 $. We write objects
  of $\Fin_{*}$ as $\langle n \rangle :=\{0,1,\cdots,n\}$ with base point $0$, where $n \geq 0$.
\end{ex}
\medskip
Non-equivariantly, a symmetric monoidal $\infty$-category is a coCartesian fibration
$$p: \C^{\otimes} \fib \Fin_{*}$$ satisfying the Segal condition that certain maps
$\C^{\otimes}_{\langle n \rangle} \to (\C^{\otimes}_{\langle 1 \rangle})^n$ are equivalences. The underlying
$\infty$-category is $\C : = \C^{\otimes}_{\langle 1 \rangle}$. The coCartesian lifts of
$\alpha_n: \langle n \rangle \to \langle 1 \rangle$ where $\alpha_n(i) = 1$ for $1 \leq i \leq n$ give  multiplications
$(\C^{\otimes}_{\langle 1  \rangle})^n \simeq \C^{\otimes}_{\langle n \rangle}  \to \C^{\otimes}_{\langle 1 \rangle}$.
Commutative algebra in $\C^{\otimes}$ are sections of
$p$ sending inert morphisms (i.e. $f: \langle m \rangle \to \langle n \rangle$ such that
$|f^{-1}\{i\}|=1$ for $1 \leq i \leq n$) to $p$-coCartesian edges. Intuitively, this
amounts to picking $X_{\langle n \rangle} \in \C^{\otimes}_{\langle n \rangle}$ and multiplications
$(X_{\langle 1 \rangle})^n \simeq X_{\langle n \rangle} \to X_{\langle 1 \rangle}$. 
Commutative algebras in the symmetric monoidal $\infty$-category $\Sp^{\otimes}$ are $\EE_{\infty}$-ring spectra.

Equivariantly, there is a hierarchy of commutativity for ring $G$-spectra.
Intuitively, a naive $\EE_{\infty}$-ring $G$-spectrum $E$ has a
$G$-equivariant multiplication $\otimes_2E \to E$ that is commutative up to coherent
homotopies. A genuine $\EE_{\infty}$, or $G$-$\EE_{\infty}$ ring spectrum , has
additional $G$-equivariant multiplications indexed by $G$-orbits $\otimes_{G/H} E \to E$,
where $\otimes_{G/H} E = \mathrm{N}^G_H \mathrm{res}_H^G E$ is given by the
Hill--Hopkins--Ravanel norm.

\begin{mydef}
  [\cite{Parametrized_algebra}, combination of def. 2.2.3. and prop. 2.2.6]
  \label{defn:g-symmetric-monoidal}
  A $G$-symmetric monoidal $G$-$\infty$-category is a coCartesian fibration $ p
  \colon \ul\C^\otimes \fib \GFin_*$ such that the following \textbf{$G$-Segal
    condition} is satisfied:
  \begin{center}
   For every $I=(U\to O) \in \GFin_*$,  the map
$ \ul\C^\otimes_I \to \prod_{W \in Orb(U)} \ul\C^\otimes_{[W]}$ induced by the morphisms
\begin{align*}
  \xymatrix{
    U \ar[d] & W \ar[d] \ar[l]_f \ar[r]^{=} \ar[d]^{=} & W \ar[d]^{=} \\
    O & W \ar[r]^{=} \ar[l] \ar[r]^{=} & W,
  }
\end{align*}
in $\GFin_*$ is an equivalence,
where $f:W \to U$ is an inclusion of the orbit $W$.
\end{center}
\end{mydef}
\begin{notation}
  \label{notn:fibCTensor}
  Let $ p  \colon \ul\C^\otimes \fib \GFin_*$ be a coCartesian fibration.
\begin{itemize}
\item For $I\in \GFin_*, \, I=(U\to O)$, let $\ul\C^\otimes_I$ be the fiber of $p$ over $I$. 
\item For $W$ a $G$-orbit, let $\ul\C^\otimes_{[W]}$ be the fiber of $p$ over $(W \xto{=} W)$.
\end{itemize}
\end{notation}
\noindent The underlying $G$-$\infty$-category of $ p \colon \ul\C^\otimes \fib \GFin_*$
is the coCartesian fibration
$$\ul\C:=\ul\C^\otimes \times_{\GFin_*} \OGop \fib \OGop$$
defined by pulling back $p$ along the inclusion  $\OGop \to \GFin_*$.
Note that $\ul\C_{[G/H]}$ as in \cref{notn:fibC} coincides with $\ul\C^{\otimes}_{[G/H]}$ as
in \cref{notn:fibCTensor}.

\begin{ex}
  Let $\ul\Sp^G \fib \OGop$ be the $G$-$\infty$-category of genuine $G$-spectra, see \cite{Nardin_thesis}.
  There is an essentially unique $G$-symmetric monoidal structure on $\ul\Sp^G$
  with the sphere spectrum as unit, see \cite[cor. 3.28]{Nardin_thesis}.
  By construction, $(\ul\Sp^G)^\otimes$ is a distributive $G$-symmetric monoidal $G$-$\infty$-category (in other words, parametrized smash products distribute over parametrized colimits).
  Informally, this $G$-symmetric monoidal structure encodes smash products and Hill-Hopkin-Ravenel norms. 
  In $(\ul\Sp^G)^\otimes \fib \ul\Fin_{*}^{G}$,
  there is $(\ul\Sp^G)^\otimes_{[G/H]} \simeq
  \Sp^H$ for $G/H \in \OG$, and moreover, 
  $(\ul\Sp^G)^\otimes_{(\nabla_n)} \simeq (\Sp^H)^n$ for $n\geq1$ and $\nabla_n: \amalg_n G/H \to G/H$ the fold map. More
  interestingly, for $K \subset H \subset G$ and projections $\varphi: G/K \to G/H$,
\begin{itemize}
\item The $G$-Segal map $(\ul\Sp^G)^\otimes_{(\varphi)} \xrightarrow{\sim} (\ul\Sp^G)^\otimes_{[G/K]} (\simeq \Sp^K)$ is
  given by a coCartesion lift along the following morphism in  $\ul{\Fin}_{*}^G$:
\begin{equation}
\label{eq:Segal}
   \xymatrix{
    G/K \ar[d]_{\varphi} & G/K \ar[d]_{=} \ar[l]_{=} \ar[r]^{=} & G/K \ar[d]_= \\
    G/H & G/K \ar[r]^{=} \ar[l]_{\varphi} \ar[r]^{=} & G/K.
  }
\end{equation}

\item The restriction $\mathrm{res}_{K}^{H}: \Sp^H \simeq (\ul\Sp^G)^\otimes_{[G/H]} \to
  (\ul\Sp^G)^\otimes_{[G/K]}\simeq \Sp^K$ is given by a coCartesion lift along the
  following morphism in  $\ul{\Fin}_{*}^G$:
\begin{equation}
\label{eq:res}
\xymatrix{
    G/H \ar[d]_{=} & G/K \ar[d]_{=} \ar[l]_{\varphi} \ar[r]^{=} & G/K \ar[d]_= \\
    G/H & G/K \ar[r]^{=} \ar[l]_{\varphi} \ar[r]^{=} & G/K.
  }
\end{equation}
 
\item The norm map $\mathrm{N}_{K}^{H}: \Sp^K \simeq (\ul\Sp^G)^\otimes_{(\varphi)} \to
  (\ul\Sp^G)^\otimes_{[G/H]}\simeq \Sp^H$ is given by a coCartesian lift along the
  following morphism in $\ul{\Fin}_{*}^G$:
\begin{equation}
\label{eq:norm}
  \xymatrix{
    G/K \ar[d]_{\varphi} & G/K \ar[d]_{\varphi} \ar[l]_{=} \ar[r]^{\varphi} & G/H \ar[d]_= \\
    G/H & G/H \ar[r]^{=} \ar[l]_{=} \ar[r]^{=} & G/H.
  }
\end{equation}
\end{itemize}
\end{ex}

  \begin{rem}
    In a general $G$-symmetric monoidal $G$-$\infty$-category $\ul\C^{\otimes}$ and
    $K \subset H \subset G$, we refer to the map  $\ul\C_{[G/K]} \to \ul\C_{[G/H]}$ induced by
    \eqref{eq:Segal} and \eqref{eq:norm} as the induction map.
 One can define a naive symmetric monoidal $G$-$\infty$-category as a
 $\OGop$-diagram of symmetric monoidal $\infty$-categories. Such gadgets will not
 have induction maps, and can be modeled by coCartesian fibrations over the
 subcategory of $\ul{\Fin}_{*}^G$ spanned by $I$ of the form $(\amalg_n O \to O)$ for
 $n \geq 0$.
\end{rem}

\subsection{The $G$-$\infty$-category of topological $G$-objects}
\label{app:top_G_objs}
Let $\C$ be a topological category.
Let $\otimes$ be an enriched symmetric monoidal structure\footnote{See the following mathoverflow post: https://mathoverflow.net/questions/51783/enriched-monoidal-categories \cite{Enriched_SM_categories}.
In Kelly's book as linked in the post, the tensor product of $\mathcal{V}$-enriched
categories is defined on page 12.} on $\C$ with unit $I$.
We refer to such an enriched symmetric monoidal category as a \myemph{symmetric
  monoidal topological category}.
It is natural to consider objects in $\C$ with $G$-action or action by a
  subgroup of $G$, which we refer
  to as \myemph{topological $G$-objects in $\C$.}
  Such gadgets should admit  restrictions and $G/H$-indexed tensor products.

The goal of this subsection is to make this rigorous using the mechanism of
action groupoids and monoidal pushforward functors.  We will construct a
topological category $\ul\C^\otimes$ over $\GFin_*$ of topological $G$-objects in $\C$ 
(\cref{construction:topological_G_objects}) and  will
prove:
\begin{thm}
  \label{thm:operaded-nerve-topological-cat}
  Let $N(\ul\C^\otimes)$ denote the coherent nerve of $\ul\C^\otimes$ (as in
\cite[def. 1.1.5.5]{HTT}). 
  Then 
  \[
    N(\ul\C^\otimes) \to \GFin_*
  \]
  is a $G$-symmetric monoidal $G$-$\infty$-category. 
\end{thm}

\medskip
We start with some preliminaries needed for constructing $\ul\C^{\otimes}$.

Let $I$ be a small category.
Make $\Fun(I,\C)$ into a topological category by endowing the set $\Nat(F,G)$ of
natural transformations between $F,G:I\to \C$ with the topology from being the
equalizer of \( \prod_{i\in I} \Map_{\C}(Fi, Gi) \rightrightarrows
\prod_{\phi:i\to i'} \Map_{\C}(Fi, Gi') \), where one map is induced by precomposition with $F(\phi)$ and the other by postcomposition with $G(\phi)$.

A covering map $p:I\to J$ induces a monoidal pushforward functor $p^\otimes_*:
\Fun(I,\C) \to \Fun(J,\C)$ (see \cite[A.3.2]{HHR} or \cite[3.2.4]{Rubin}).
\begin{prop}
  The monoidal pushforward functor $p^\otimes_*: \Fun(I,\C) \to \Fun(J,\C)$ is a topological functor.
\end{prop}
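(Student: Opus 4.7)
The plan is to verify continuity of the induced map on natural transformation spaces
\[
  p^\otimes_* \colon \Nat(F,G) \longrightarrow \Nat(p^\otimes_* F, p^\otimes_* G)
\]
for each pair $F,G \in \Fun(I,\C)$, by factoring through the ambient products and appealing to the universal property of the equalizer topology.

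First, I would recall (as already used in the definition of $p^\otimes_*$ on ordinary categories) that on components, $p^\otimes_*\alpha$ is given at $j \in J$ by $\bigotimes_{i \in p^{-1}(j)} \alpha_i$. The crucial input is that the enriched symmetric monoidal structure on $\C$ makes the iterated tensor operation
\[
  \bigotimes \colon \prod_{i \in p^{-1}(j)} \Map_\C\bigl(F(i), G(i)\bigr) \longrightarrow \Map_\C\Bigl(\bigotimes_{i \in p^{-1}(j)} F(i),\ \bigotimes_{i \in p^{-1}(j)} G(i)\Bigr)
\]
continuous for each $j \in J$ (here the fibers of $p$ are finite, which is part of the meaning of a covering map in the relevant sense, so there is no issue with infinite products).

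Next, I would display $p^\otimes_*$ as a composite of continuous maps into the ambient product $\prod_{j \in J}\Map_\C((p^\otimes_* F)(j),(p^\otimes_* G)(j))$:
\begin{enumerate}
  \item The inclusion $\Nat(F,G) \hookrightarrow \prod_{i \in I}\Map_\C(F(i),G(i))$ is continuous by the definition of the equalizer topology on $\Nat(F,G)$.
  \item The reshuffling $\prod_{i \in I}\Map_\C(F(i),G(i)) \cong \prod_{j\in J}\prod_{i \in p^{-1}(j)}\Map_\C(F(i),G(i))$ is a homeomorphism.
  \item Taking the product over $j$ of the tensor operations above yields a continuous map to $\prod_{j\in J}\Map_\C((p^\otimes_* F)(j),(p^\otimes_* G)(j))$.
\end{enumerate}
The composite lands in the subspace $\Nat(p^\otimes_* F, p^\otimes_* G)$ because $p^\otimes_*\alpha$ is a genuine natural transformation (this is exactly the content of $p^\otimes_*$ being a functor of ordinary categories). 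Since the target carries the subspace (equalizer) topology, the factorization $\Nat(F,G) \to \Nat(p^\otimes_* F, p^\otimes_* G)$ is automatically continuous.

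The main obstacle is not really conceptual but notational: one must unwind the explicit formula for $p^\otimes_*$ on morphisms (involving the coherence data of the symmetric monoidal structure together with the covering property of $p$ to identify $p^{-1}(j')$ with $p^{-1}(j)$ along a morphism $j \to j'$) and check that each ingredient is continuous in the variables of $\alpha$. But every such ingredient is either a projection, a permutation of coordinates, a structure map of a fixed tensor product (hence independent of $\alpha$), or the tensor map on hom-spaces, all of which are continuous. The statement is thus essentially a formal consequence of the enriched symmetric monoidal structure of $\C$ together with the finiteness of the fibers of $p$.
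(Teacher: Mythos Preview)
Your proposal is correct and follows essentially the same approach as the paper's proof: both arguments observe that the enriched symmetric monoidal structure makes the tensor map $\prod_{p(i)=j}\Map_\C(Fi,Gi)\to\Map_\C(\bigotimes_{p(i)=j}Fi,\bigotimes_{p(i)=j}Gi)$ continuous, and then use the equalizer topology on $\Nat(-,-)$ to conclude that the induced map on natural transformation spaces is continuous. Your write-up is somewhat more explicit about the factorization through the ambient product and the reason the image lands in the equalizer, but the content is the same.
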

\begin{proof}
Let $F, G \in \Fun(I, \C)$ be functors. The mapping space $\Nat(p^\otimes_* F, p^\otimes_* G)$ is
obtained as the equalizer of $$ \prod_{j\in J} \Map_{\C}(\otimes_{pi=j}Fi, \otimes_{pi=j}Gi)
\rightrightarrows \prod_{\phi:j\to j'} \Map_{\C}(\otimes_{pi=j}Fi, \otimes_{pi' = j'} Gi'). $$
Because $\C$ is a symmetric monoidal topological category, the maps
$$\prod_{pi=j} \Map_{\C}(Fi,Gi) \to \Map_{\C}(\otimes_{pi=j}Fi, \otimes_{pi=j} Gi)$$
are continuous, and similarly with $i',j'$.
Thus $\Nat(F, G) \to \Nat(p^\otimes_* F, p^\otimes_* G)$ is continuous.
\end{proof}

Let $U$ be a $G$-set. The \textbf{action groupoid} of $U$, denoted $B_U G$, has
objects $x \in U$ and morphisms $\Hom(x,y)= \set{g\in G | gx =y}$. Write $BG$ for $B_{G/G}G$.
A map of $G$-sets $f:U \to V$ induces a functor on the action groupoids, which we
denote by $Bf:B_UG \to B_V G$. 

Note that the action groupoid of a pullback of $G$-sets is isomorphic to the strict pullback of action groupoids, 
\begin{align*}
  P \= X\times_Z Y \Rightarrow B_P G \= B_X G \times_{B_Z G} B_Y G.
\end{align*}

\paragraph{Constructing a topological category over $\GFin_*$.} 

\begin{construction} \label{construction:topological_G_objects}
  Let $\C$ be a symmetric monoidal topological category.
  We construct a topological category $\ul\C^\otimes$ over $\GFin_*$ as follows. 
  \begin{itemize}
    \item An object $x\in\ul\C^\otimes$ over $I\in\GFin_*, I=(U\to O)$ is a functor $x: B_U G \to \C$.
    \item Let $x\in \ul\C^\otimes$ be an object over $I_0=(U_0 \to O_0)$,
       $y\in \ul\C^\otimes$ be over $I_1=(U_1\to O_1)$, and $\psi:I_0 \to I_1$ be a morphism of $\GFin_*$ given by the diagram \eqref{eq:psi_diagram}.
      Define the space of morphisms of $\ul\C^\otimes$ from $x$ to $y$ over $\psi$ to be $\Map^\psi_{\ul\C^\otimes}(x,y) = \Nat(p^\otimes_*f^* x, y)$.
      Here,  $f^* x$ is the composition $B_{U'_0} G
      \xto{f} B_{U_0} G \xto{x} \C$, and  $p^\otimes_* f^* x: B_{U_1} G \to \C$
      is the monoidal pushforward of $f^{*}x$ along $Bp:B_{U'_0} G \to B_{U_1} G$. This is defined because $Bp: B_{U'_0} G \to B_{U_1} G$ is always a covering map.
    \item Define the mapping space in the topological category $\ul\C^\otimes$ as $\Map_{\ul\C^\otimes}(x,y) = \coprod_\psi \Map^\psi_{\ul\C^\otimes}(x,y)$, where the coproduct is indexed over all $\psi\in \Hom_{\GFin_*}(I_0,I_1)$.

    \item Let $x_0,x_1,x_2 \in \ul\C^\otimes$ be objects over $I_0,I_1,I_2\in \GFin_*$.
      In what follows we construct continuous maps 
      \begin{align*}
        \Map^{\psi_1}_{\ul\C^\otimes}(x_0,x_1) \times \Map^{\psi_2}_{\ul\C^\otimes}(x_1,x_2)  \to \Map^{\psi_2\psi_1}_{\ul\C^\otimes}(x_0,x_2) ,
      \end{align*}
      for each $\psi_1:I_0 \to I_1, \, \psi_2: I_1\to I_2$ in $\GFin_*$.
      This allows us to define the composition map
      $$\Map_{\ul\C^\otimes}( x_0,x_1) \times \Map_{\ul\C^\otimes}(x_1,x_2) \to \Map_{\ul\C^\otimes}(x_0,x_2) $$
      as the coproduct of these maps.
      In other words, we make sure that  \( \ul\C^\otimes \to \GFin_* \) respects compositions by definition. 

      We first choose an explicit description of the composition $\psi_2 \psi_1: I_0 \to I_2$.
      Let $I_0= (U_0 \to O_0), \, I_1 = (U_1 \to O_1), \, I_2 = (U_2 \to O_2) $ and 
      \begin{align}\label{eq:psi1psi2}
        \psi_1 = \left( \vcenter{\xymatrix{
            U_0 \ar[d] & U_0' \ar[d] \ar[l]_{f_1} \ar[r]^{p_1} & U_1 \ar[d] \\
            O_0 & O_1 \ar[r]^{=} \ar[l]_{\varphi_1} \ar[r]^{=} & O_1
          }}
        \right), \quad 
        \psi_2 = \left( \vcenter{\xymatrix{
            U_1 \ar[d] & U_1' \ar[d] \ar[l]_{f_2} \ar[r]^{p_2} & U_2 \ar[d] \\
            O_1 & O_2 \ar[r]^{=} \ar[l]_{\varphi_2} \ar[r]^{=} & O_2
          }}
        \right).
      \end{align}
      The composition $\psi_2 \psi_1$ is given by 
      \begin{align} \label{eq:GFin_composotion}
        \psi_2 \psi_1 = \left( \vcenter{\xymatrix{
            U_0 \ar[d] & U_2' \ar[d] \ar[l]_{f_1 \ol{f}_2} \ar[r]^{p_2 \ol{p}_1} & U_2 \ar[d] \\
            O_0 & O_2 \ar[r]^{=} \ar[l]_{\varphi_1 \varphi_2} \ar[r]^{=} & O_2
          }}
        \right), \quad 
        \vcenter{\xymatrix{
          U'_2 \ar[r]^{\ol{p}_1} \ar[d]_{\ol{f}_2} \pullbackcorner & U'_1 \ar[d]_{f_2} \ar[r]^{p_2} & U_2 \\
          U'_0 \ar[r]^{p_1} \ar[d]_{f_1} & U_1 \\
          U_0
        } }
      \end{align}
      where the maps $\overline{f}_2: U'_2 \to U'_0, \, \overline{p}_1 : U'_2 \to U'_1 $ are given by the pullback square of finite $G$-sets in \eqref{eq:GFin_composotion}. 
      Note that the pullback square of diagram \eqref{eq:GFin_composotion} induces a strict pullback square of action groupoids in the following diagram of groupoids
      \begin{align*}
        \xymatrix{
          \pullbackcorner B_{U'_2} G \ar[r]^{\ol{p}_1} \ar[d]_{\ol{f}_2} & B_{U'_1} G \ar[d]_{f_2} \ar[r]^{p_2} & B_{U_2} G \\
          B_{U'_0} G \ar[r]^{p_1} \ar[d]_{f_1} & B_{U_1} G \\
          B_{U_0} G.
        }
      \end{align*}
      By \cite[prop. A.31]{HHR} it follows that the following diagram commutes up to natural isomorphism (given by the symmetric monoidal structure of $\C$)
      \begin{align*}
        \xymatrix{
          \Fun(B_{U'_2} G, \C) \ar[r]^{(\ol{p}_1)^\otimes_*} & \Fun(B_{U'_1} G, \C) \ar[r]^{(p_2)^\otimes_*} & \Fun(B_{U_2} G, \C) \\
          \Fun(B_{U'_0} G, \C) \ar[r]^{(p_1)^\otimes_*} \ar[u]^{(\ol{f}_2)^*} & \Fun(B_{U_1} G, \C) \ar[u]^{(f_2)^*} \\
          \Fun(B_{U_0} G, \C) \ar[u]^{(f_1)^*}.
        } 
      \end{align*}
      In particular, for \( x_0\in \Fun(B_{U_0} G, \C) \) we get a natural isomorphism 
      \begin{align} \label{eq:parametrized_indexed_tensor_products}  
        (p_2)^\otimes_* (f_2)^* (p_1)^\otimes_* (f_1)^* x_0 \= 
        (p_2)^\otimes_* (\ol{p}_1)^\otimes_* (\ol{f}_2)^* (f_1)^* x_0 \=
        (p_2 \ol{p}_1)^\otimes_* (f_1 \ol{f}_2)^* x_0 ,
      \end{align}
      where the second isomorphism is given by \cite[prop. A.29]{HHR}.
      Note that the mapping spaces of the topological functor categories in \eqref{eq:parametrized_indexed_tensor_products} are the spaces of natural transformations, 
      so the functor \( (p_2)^\otimes_* (f_2)^* : \Fun(B_{U_1} G, \C) \to \Fun(B_{U_2} G, \C)\) induces a continuous map
      \begin{align} \label{eq:pushing_eta_1}
        \Nat \left( (p_1)^\otimes_* (f_1)^* x_0 ,x_1 \right) \to \Nat \left( (p_2)^\otimes_* (f_2)^* (p_1)^\otimes_* (f_1)^* x_0 , (p_2)^\otimes_* (f_2)^* x_1 \right)
      \end{align}
      We now define the map 
      \( \Map^{\psi_1}_{\ul\C^\otimes}(x_0,x_1) \times \Map^{\psi_2}_{\ul\C^\otimes}(x_1,x_2)  \to \Map^{\psi_2\psi_1}_{\ul\C^\otimes}(x_0,x_2) \)
      as the composition
      \begin{align} \label{eq:composition}
        \xymatrix{
          \Nat \left( (p_1)^\otimes_* (f_1)^* x_0, x_1 \right) \times \Nat \left( (p_2)^\otimes_* (f_2)^* x_1, x_2 \right) \ar[d] \\
          \Nat \left( (p_2)^\otimes_* (f_2)^* (p_1)^\otimes_* (f_1)^* x_0, (p_2)^\otimes_* (f_2)^* x_1 \right) \times \Nat \left( (p_2)^\otimes_* (f_2)^* x_1, x_2 \right) \ar[d]^-{\circ} \\
          \Nat \left( (p_2)^\otimes_* (f_2)^* (p_1)^\otimes_* (f_1)^* x_0, x_2 \right) \ar[d]^{\=} \\
          \Nat \left( (p_2 \ol{p}_1)^\otimes_* (f_1 \ol{f}_2)^* x_0, x_2 \right) ,
        }
      \end{align}
      where the first map is given by \eqref{eq:pushing_eta_1} on the first coordinate and the identity on the second, 
      the second map is the composition in $\Fun(B_{U_2} G,\C)$ and
      the last isomorphism is induced by \eqref{eq:parametrized_indexed_tensor_products}.
  \end{itemize}
  Associativity of the composition in $\ul\C^\otimes$ follows from \cite[prop. A.29]{HHR}.
\end{construction}
\begin{ex}
Applying \cref{construction:topological_G_objects} to $\Fin_{*}$ yields $\GFin_*$ (\cite[Example 6.16]{Bonventre}).
\end{ex}

\paragraph{Checking that $\ul\C^\otimes \to \GFin_*$ is a coCartesian fibration.}
\begin{lem}
  \label{lem:cocart}
  Let $\psi_1:I_0 \to I_1$ be a morphism of $\GFin_*$ given by \eqref{eq:psi1psi2} and $x \in \ul\C^\otimes$ over $I_0$, i.e., a functor \( x: B_{U_0} G \to \C \).
\begin{itemize}
\item Define $y: B_{U_1} G \to \C$ over $I_1$ by setting $y= (p_{1})^\otimes_* (f_{1})^* x$, and define
  $\ol{\psi_{1}} \in \Map^{\psi_1}_{\ul\C^{\otimes}}(x,y)$ as the identity natural
  transformation \( (p_{1})^\otimes_* (f_{1})^* x \xto{=}  (p_{1})^\otimes_*
  (f_1)^* x =y \).
\item Then for every $\psi_2 : I_1\to I_2$ in $\GFin_*$ given by~\eqref{eq:psi1psi2} and $t\in \ul\C^\otimes$ over $I_2$ the continuous
  map \( (\overline{\psi_{1}})^*:  \Map^{\psi_{2}}_{\ul\C^\otimes}(y,t)  \to  \Map^{\psi_{2}
    \psi_{1}}_{\ul\C^\otimes}(x,t) \) as defined in~\eqref{eq:composition} is an isomorphism.
\end{itemize} 
\end{lem}
\begin{proof}
 Explicitly, $(\ol{\psi_1})^{*}$ is the composite
    \begin{equation*}
   \Nat \left( (p_2)^\otimes_* (f_2)^* y, z \right) \overset{\ol{\psi_1}\circ}{\to}
        \Nat \left( (p_2)^\otimes_* (f_2)^* (p_1)^\otimes_* (f_1)^* x, t \right)  \overset{\cong}{
          \to } \Nat \left( (p_2 \ol{p}_1)^\otimes_* (f_1 \ol{f}_2)^* x, t \right).
    \end{equation*}
The first map is an isomorphism by the definition of $y$ and the second map is an isomorphism induced by~(\ref{eq:parametrized_indexed_tensor_products}) as before.
\end{proof}

\begin{cor} \label{top_G_objects_coCart_fib}
  The map $N(\ul\C^\otimes) \to \GFin_*$ is a coCartesian fibration.
\end{cor}
\begin{proof}
 \Cref{lem:cocart} implies that the square
  \begin{align*}
    \xymatrix{
      \Map_{\ul\C^\otimes}(y,t)  \ar@{->>}[d] \ar[r]^{\overline{\psi}^*} & \Map_{\ul\C^\otimes}(x,t) \ar@{->>}[d] \\
      \Hom_{\GFin_*}(I_1,I_2) \ar[r]^{\psi^*} & \Hom_{\GFin_*}(I_0,I_2) 
    }
  \end{align*}
  is homotopy Cartesian, since it induces weak equivalences on the homotopy fibers of the vertical maps.
  Therefore by \cite[prop. 2.4.1.10]{HTT} the morphism $\ol{\psi} : x\to y$ in $\ul\C^\otimes$ is a coCartesian lift of $\psi:I_0 \to I_1$ in  $\GFin_*$.
  We have shown that every $\psi:I_0 \to I_1$ and $x \in \ul\C^\otimes$ over $ I_0$ has a coCartesian lift $ \ol{\psi}$.
  Passing to the coherent nerve of $\ul\C$ (see \cite[def. 1.1.5.5]{HTT}) we know that the map $N(\ul\C^\otimes) \to \GFin_*$ is an inner fibration (again by \cite[prop. 2.4.1.10]{HTT}), so we have shown that it is coCartesian fibration by verifying \cite[def. 2.4.2.1]{HTT}. 
\end{proof}

\paragraph{Checking the $G$-Segal conditions.}
Let $I = (U \to O)$ be as in Definition \ref{defn:g-symmetric-monoidal}. Then $C^\otimes_I$ (\cref{notn:fibCTensor}) is the topological category with objects given by functors $x:B_U G \to \C$ and mapping spaces $\Map_{\ul\C^\otimes_I}(x,y)= \Map^{id_I}_{\ul\C^\otimes}(x,y) = \Nat(x,y)$.
\begin{rem}
  It is easy to see that if $W \= G/H$ then $B_W G \simeq BH$, hence $\ul\C^\otimes_{[W]}$ is equivalent to the topological category $\Fun(BH,\C)$ of $H$-objects in $\C$.
\end{rem}
Let $I=(U \to O)\in \GFin_*$ and $f:W \to U$ be an inclusion of orbit as in Definition \ref{defn:g-symmetric-monoidal}.
Our choice of coCartesian edges above implies that the $G$-Segal map is just the coherent nerve of
\begin{align*}
  f^*: \ul\C^\otimes_I \to \ul\C^\otimes_{[W]}, \quad (x: B_U G \to \C) \mapsto (f^* x: B_W G \xto{Bf} B_U G \xto{x} \C).
\end{align*}
Taking the product over all orbits $W\in \orb(U)$ we get a functor of topological categories \( \ul\C^\otimes_I \to \prod_{W\in \orb(U)} \ul\C^\otimes_{[W]} \)
whose coherent nerve is equivalent to the $G$-Segal map.  
Checking the $G$-Segal conditions amounts to proving
\begin{lem} \label{G_Segal_conds}
  The functor \( \ul\C^\otimes_I \to \prod_{W\in \orb(U)} \ul\C^\otimes_{[W]} \) is an equivalence.
\end{lem}
\begin{proof}
  This follows from the description of $\ul\C^{\otimes}_I$ and $f^{*}$ above.
\end{proof}

\begin{proof}[{Proof of \cref{thm:operaded-nerve-topological-cat} }]
  The map $N(\ul\C^\otimes) \to \GFin_*$ is a coCartesian fibration by \cref{top_G_objects_coCart_fib}, 
  and by \cref{G_Segal_conds} it satisfies the $G$-Segal conditions. 
\end{proof}

\subsection{The one point compactification functor}
\label{sec:one-point}
In this section, we upgrade the one point compactification functor to
a $G$-symmetric monoidal functor
\begin{align} \label{eq:OPC_as_GSM}
  (-)^+ \colon \GmfldD \to (\ul{\Top}^{G,\vee}_*)^{vop}.
\end{align}

We rely on the fact that one point compactification defines a functor of topological categories. 
\begin{construction} \label{const:one_point_compactification}
  Let $M\in \mfld_n$ be an $n$-dimensional manifold, and denote its one point compactification by $M^+ \in \Top_*$.
  Since morphisms in $\mfld_n$ are open embeddings of manifolds, one point compactification defines a functor \( (-)^+  \colon \mfld_n \to (\Top_*)^{op} \).
  Furthermore, $(-)^+$ takes disjoint unions to wedge sums, so it defines a symmetric monoidal
  functor $$ (-)^+ \colon (\mfld_n,\sqcup) \to ((\Top_*)^{op}, \vee). $$
  We consider both categories $\mfld_n$ and $\Top_*$ as \myemph{topological} symmetric monoidal categories using the compact open topology.
  By \cite{Continuity_of_collapse_maps}, this one point compactification is a functor of topological categories. 
\end{construction}

\begin{lem}
  Let $(\mfld_n,\sqcup)$ be the topological symmetric monoidal category of \cite[def. 2.1]{AF}.
  The $G$-symmetric monoidal $G$-$\infty$-category of topological $G$-objects in $(\mfld_n,\sqcup)$ is equivalent to the $G$-symmetric monoidal $G$-$\infty$-category $\GmfldD$ defined in \cite[sec. 3.4]{GFH}.
\end{lem}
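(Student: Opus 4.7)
The plan is to identify both sides as $G$-symmetric monoidal $G$-$\infty$-categories whose fiber over each orbit $G/H$ is the topological $\infty$-category of $H$-manifolds with $H$-equivariant open embeddings, equipped with disjoint union, and whose restriction and norm functors are induced by the evident underlying-$H$-manifold and disjoint union operations.

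First I would unpack the left-hand side. A topological $G$-object in the symmetric monoidal topological category $(\mfld_n,\sqcup)$ is, in the sense of \cref{app:top_G_objs}, a presheaf of symmetric monoidal topological categories on $\OG^{op}$ given by $G/H \mapsto (\mfld_n)^H$, the $H$-fixed topological category whose objects are $H$-actions on $n$-manifolds and whose mapping spaces are $H$-fixed points of spaces of open embeddings. The symmetric monoidal structure on each fiber is disjoint union, and restriction along $G/K \to G/H$ is the underlying-$K$-manifold functor. Applying the genuine operadic nerve then produces a $G$-$\infty$-operad $\mathbf{N}^\otimes(\mfld_n,\sqcup)^G$ that unstraightens a functor $\OG^{op}\to \CAlg(\Cat_\infty)$.

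Next I would recall the construction of $\GmfldD$ from \cite[sec. 3.4]{GFH}. By construction, its underlying $G$-$\infty$-category $\Gmfld$ has fiber over $G/H$ equivalent to the coherent nerve of the topological $\infty$-category of $H$-manifolds and $H$-equivariant open embeddings, with restriction functors induced by forgetting along $K \hookrightarrow H$; the $G$-symmetric monoidal structure $\GmfldD \to \GFin_*$ is the one whose active morphisms encode $H$-equivariant disjoint union. Fiber-by-fiber, the active part of $\mathbf{N}^\otimes(\mfld_n,\sqcup)^G$ over $I = (U\to G/H)$ is computed by the genuine operadic nerve to be $\prod_{W\in\orb(U)} (\mfld_n)^W$ with morphisms encoding tuples of equivariant open embeddings whose images are disjoint, and the structure map over a morphism in $\GFin_*$ is induced by forgetting along $K<H$ together with disjoint union; this is precisely the active part of $\GmfldD$.

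Finally I would upgrade this pointwise identification to an equivalence of $G$-$\infty$-operads (equivalently, of $G$-symmetric monoidal $G$-$\infty$-categories) by invoking a straightening argument: both sides are the unstraightening of the same functor $\OG^{op}\to \CAlg(\Cat_\infty)$, $G/H\mapsto (\mathbf{N}\mfld^H,\sqcup)$, with transition functors induced by restriction of group actions. The coCartesian edges in both presentations are characterized by the same property — being sent to disjoint union in each fiber — so a cofinality/universal-property argument from \cref{app:top_G_objs} promotes the pointwise equivalence to a $G$-symmetric monoidal equivalence.

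The main obstacle is the bookkeeping required to match the norm (equivalently, active) morphisms over a general $I=(U\to G/H)$: one must verify that the genuine operadic nerve of the fixed-point topological categories produces exactly the equivariant multi-embedding spaces appearing in the indexed tensor structure of $\GmfldD$. Once this identification is made at the level of simplicial mapping spaces, the straightening/unstraightening comparison of \cref{app:top_G_objs} delivers the equivalence.
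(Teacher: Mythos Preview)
The paper states this lemma without proof; it is asserted as a consequence of the genuine operadic nerve construction of \cite{Bonventre} described in \cref{app:top_G_objs}, but no argument is given. So there is nothing to compare your proposal against.

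That said, a few comments on your sketch. Your overall plan---identify the fibers over each $G/H$ with $H$-manifolds and $H$-equivariant embeddings, then verify that restriction and norm functors agree---is the natural one and is what the paper's construction in \cref{app:top_G_objs} is designed to make checkable. However, your reduction step contains an imprecision: a $G$-symmetric monoidal $G$-$\infty$-category is \emph{not} the same data as a functor $\OGop \to \CAlg(\Cat_\infty)$. That would only encode the fiberwise symmetric monoidal structure and the restriction maps; it loses exactly the norm maps (the indexed tensor products over non-trivial $G$-maps of orbits) that make the structure genuinely $G$-symmetric monoidal. You acknowledge this in your ``main obstacle'' paragraph, but then your straightening argument as stated cannot close the gap, since the functor you straighten to does not remember the norms. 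What you would actually need is either to work directly with the coCartesian fibration over $\GFin_*$ (as in \cref{construction:topological_G_objects}) and construct an explicit equivalence over $\GFin_*$, or to identify both sides as $G$-commutative monoids in $\ul\Cat_\infty^G$ in the sense of \cite{Nardin_thesis}. The actual verification---that the mapping spaces $\Nat(p^\otimes_* f^* x, y)$ in \cref{construction:topological_G_objects} recover the equivariant multi-embedding spaces defining $\GmfldD$ in \cite[sec. 3.4]{GFH}---is the substance of the lemma, and your sketch correctly isolates this as the crux.
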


\begin{proof}
  The proof follows from the definitions.
\end{proof}

\noindent Similarly, one can use the genuine operadic nerve construction to construct the $G$-$\infty$-category of pointed $G$-spaces with the $G$-coCartesian $G$-symmetric monoidal structure (see \cite[ex. 2.4.1]{Parametrized_algebra}).

The next statement uses the notion of an opposite $G$-$\infty$-category $\ul\C^{vop} \to \OGop$ of a $G$-$\infty$-category $\ul\C \to \OGop$, see \cite[def. 3.1]{Expose1}.
\begin{lem}
  The  $G$-symmetric monoidal $G$-$\infty$-category of topological $G$-objects in $((\Top_*)^{op},\vee)$ is equivalent to the $G$-$\infty$-category $(\ul{\Top}^{G, \vee}_*)^{vop}$, where the $G$-$\infty$-category of pointed $G$-spaces $\ul{\Top}^G_*$ is endowed with the $G$-coCartesian $G$-symmetric monoidal structure. 
\end{lem}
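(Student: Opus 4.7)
The plan is to unpack both sides of the claimed equivalence and identify them by a direct categorical comparison. The key observation to open the argument is that the wedge sum $\vee$ is the categorical coproduct in $\Top_*$, so $((\Top_*)^{op}, \vee)$ is nothing other than the opposite category $(\Top_*)^{op}$ endowed with its \emph{Cartesian} symmetric monoidal structure. I would state this as the first step, since it reduces the problem to understanding how the genuine operadic nerve interacts with Cartesian structures and with the formation of opposites.

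Next, I would separate the identification of the underlying $G$-$\infty$-category from the identification of the $G$-symmetric monoidal structure. For the underlying category, the construction of topological $G$-objects applied to $\Top_*$ yields, after genuine operadic nerve, precisely $\ul\Top^G_*$: the fiber over $G/H$ is the $\infty$-category of pointed $H$-spaces, and the restriction functors along $G/K \to G/H$ are the usual restriction of group actions. Since the construction is functorial in the underlying topological category and commutes with the formation of opposites (the genuine operadic nerve of $\C^{op}$ is the vertical opposite of the nerve of $\C$, as only vertical morphisms get reversed), passing from $\Top_*$ to $(\Top_*)^{op}$ produces exactly $(\ul\Top^G_*)^{vop}$.

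For the $G$-symmetric monoidal structure, I would argue that the genuine operadic nerve of a Cartesian symmetric monoidal topological category produces the $G$-Cartesian $G$-symmetric monoidal structure on the resulting $G$-$\infty$-category. The tensor products in each fiber are fiberwise products by construction, and the norm map along $\varphi \colon G/K \to G/H$ is computed by the $H$-indexed product $\prod_{H/K}$, which matches the formula for norms in a $G$-Cartesian structure (see \cite[sec. $B$-coCartesian operads]{Parametrized_algebra}). Applied to $((\Top_*)^{op}, \vee) = ((\Top_*)^{op}, \times)$, this shows that the topological $G$-objects carry the $G$-Cartesian structure on $(\ul\Top^G_*)^{vop}$.

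To conclude, I would invoke the general fact that for any $G$-$\infty$-category $\ul\D$ admitting finite $G$-coproducts, the $G$-Cartesian structure on $\ul\D^{vop}$ is canonically equivalent, as a $G$-symmetric monoidal $G$-$\infty$-category, to the vertical opposite of the $G$-coCartesian structure on $\ul\D$; applied to $\ul\D = \ul\Top^G_*$ this gives the target $(\ul\Top^{G,\vee}_*)^{vop}$. The main obstacle I anticipate is the bookkeeping: carefully distinguishing the ordinary opposite used at the topological-category level from the parametrized \emph{vertical} opposite used in the $G$-$\infty$-categorical output, and verifying that the coCartesian norm maps of $\ul\Top^{G,\vee}_*$ (given by folding wedge sums over orbits) are dual to the Cartesian norm maps coming from the genuine operadic nerve of $((\Top_*)^{op}, \vee)$. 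Once this compatibility is checked on the generating data, the two $G$-symmetric monoidal $G$-$\infty$-categories agree by the universal property of the (co)Cartesian structure.
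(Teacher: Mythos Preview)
The paper states this lemma without proof, so there is no argument to compare against. Your approach is sound and essentially what one would expect: reduce to the observation that $\vee$ is the coproduct in $\Top_*$ and hence the product in $(\Top_*)^{op}$, then argue that the genuine operadic nerve of a Cartesian symmetric monoidal topological category yields the $G$-Cartesian $G$-symmetric monoidal structure (because the monoidal pushforward of \cref{construction:topological_G_objects} specializes to indexed products), and finally identify the $G$-Cartesian structure on $(\ul\Top^G_*)^{vop}$ with the vertical opposite of the $G$-coCartesian structure on $\ul\Top^G_*$.

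One point worth tightening: your claim that ``the genuine operadic nerve of $\C^{op}$ is the vertical opposite of the nerve of $\C$'' deserves care. In \cref{construction:topological_G_objects} the mapping space $\Map^\psi_{\C^\otimes}(x,y)$ is $\Nat(p^\otimes_* f^* x, y)$, computed in $\Fun(B_{U_1}G,\C)$; replacing $\C$ by $\C^{op}$ reverses these natural transformations but leaves the monoidal pushforward formula $p^\otimes_*$ unchanged as an operation on objects. You should check explicitly that this yields a coCartesian fibration over $\GFin_*$ equivalent to the fiberwise opposite of the original one, rather than merely asserting functoriality in $\C$. This is routine but is the place where the distinction between the ordinary opposite and the vertical opposite you flag as a potential obstacle actually enters.
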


\begin{proof}
  The proof follows from the definitions.
\end{proof}

Since the construction of the $G$-symmetric monoidal $G$-$\infty$-category of topological $G$-objects is functorial, we can apply it to the one point compactification functor,
and get the $G$-symmetric monoidal functor \eqref{eq:OPC_as_GSM}.

\medskip
Next, we describe the $G$-symmetric monoidal functor $\ul{\Map}_*(-,X)$.
\begin{construction} \label{cons:Mapc_X_as_top_functor}
  Let \( X \in (\ul{\Top}^{G}_*)_{[G/G]} \) be a pointed topological $G$-space.
  Applying the parametrized Yoneda embedding of \cite[def. 10.2]{Expose1} to $X$ we get a $G$-functor
  \begin{align*}   
    \ul{\Map}_*(-,X) \colon (\ul{\Top}^{G}_*)^{vop} \to \ulTopG.
  \end{align*}
  By \cite[cor. 11.9]{Expose2} the functor $\ul{\Map}_*(-,X) \colon (\ul{\Top}^{G}_*)^{vop} \to \ulTopG$ preserves $G$-limits.
  Since the $G$-symmetric monoidal structures on $(\ul{\Top}^{G}_*)^{vop}$ and $\ulTopG$ are $G$-Cartesian it follows that $\ul{\Map}_*(-,X)$ extends to a $G$-symmetric monoidal functor 
  \begin{align} \label{eq:Map_X_as_GSM}
    \ul{\Map}_*(-,X) \colon (\ul{\Top}^{G,\vee}_*)^{vop} \to \ul{\Top}^{G,\times}.
  \end{align}
\end{construction}

Composing \eqref{eq:OPC_as_GSM} and \eqref{eq:Map_X_as_GSM}, 
  we get a $G$-symmetric monoidal functor 
  \begin{align}
    \label{eq:Mapc_X}
    \ul{\Map}_*\left( (-)^+,X \right) \colon \GmfldD \xto{(-)^+ } (\ul{\Top}^{G,\vee}_*)^{vop}  \xto{\ul{\Map}_*(-,X)} \ul{\Top}^{G,\times}.
  \end{align}

\begin{rem} \label{Mapc_X_as_top_functor}
  Restricting the $G$-symmetric monoidal functor of \eqref{eq:Mapc_X} to the fiber over $G/G\in \OGop$ defines a symmetric monoidal functor of $\infty$-categories 
  \begin{align*}
    \Map_{*} \left( (-)^+,X \right) \colon \mfldGD \xto{(-)^+ } (\Top^{G,\vee}_*)^{op}  \xto{\Map_*(-,X)} \Top^{G,\times}.
  \end{align*}
  Note that, to obtain this symmetric monoidal functor alone, we can simply apply the operadic nerve construction of \cite[2.1.1.23]{HA} to the corresponding symmetric monoidal \myemph{topological} categories and functors. Indeed, we can identify the $\infty$-categories and functors above as follows:
  \begin{itemize}
    \item The symmetric monoidal $\infty$-category $\mfldGD$ is equivalent to the operadic nerve of the topological category of smooth $n$-dimensional $G$-manifolds and $G$-equivariant smooth embeddings, with symmetric monoidal structure given by disjoint unions. 
    \item The symmetric monoidal $\infty$-category $\Top^{G,\vee}_*$ is equivalent to the operadic nerve of topological category of pointed $G$-CW spaces with the coCartesian monoidal structure (given by the wedge of pointed $G$-spaces). 
      The operadic nerve of the opposite topological category is equivalent to $\left( \Top^{G,\vee}_* \right)^{op}$. 
    \item The symmetric monoidal $\infty$-category $\Top^{G,\times}$ is equivalent to the operadic nerve of topological category of $G$-CW spaces with the Cartesian monoidal structure (given by the products of $G$-spaces). 
    \item The symmetric monoidal functor \( (-)^+ \colon \mfldGD \to (\Top^{G,\vee}_*)^{op} \) can be identified with the operadic nerve of the one point compactification functor, as in \cref{const:one_point_compactification}.
    \item The symmetric monoidal functor \(  \Map_*(-,X)  \colon (\Top^{G,\vee}_*)^{op} \to \Top^{G,\times} \) can be identified with the operadic nerve of the topological functor sending a pointed $G$-space $Y$ to the space of pointed maps $\Map_*(Y,X)$, with $G$-action given by conjugation.
  \end{itemize}
  We can therefore identify the $G$-space $\Map_{*}( M^+,X)$ with the space of pointed maps $M^+ \to X$, with $G$ acting by conjugation.
\end{rem}

\section{Genuine equivariant factorization homology}
\label{sec:GFH}
We give a quick introduction to genuine equivariant factorization homology and
its axiomatic characterization, and explain more details in the remainder of this section.

Let $A$ be in a $\EE_n$-ring spectrum and $M$ a framed manifold of dimension $n$.
The factorization homology of $A$ over $M$ is the spectrum $\int_M A$ given by the homotopy colimit of a diagram
\[
  (\disk^{fr}_n)_{/M} \to \Sp, \quad ( \sqcup_k \R^n \to M) \mapsto A^{\otimes k}
\]
sending an open embedding of $k$-disks in $M$ to the $k$-fold smash product of
$M$. The functor $\int_-A$ satisfies certain properties, which are taken to be
the the axiomatic definition of factorization homology theories in
\cite{AF}.

Let $G$ be a finite group and $V$ be a fixed $n$-dimensional
$G$-representation. There is a $G$-symmetric monoidal $G$-$\infty$-category
$\ul\mfld^{G,V-fr,\sqcup}$ of $V$-framed $G$-manifolds.
For a general $G$-symmetric monoidal $G$-$\infty$-category
$\ul\C^{\otimes}$, let $\ul{A}$ be a $\EE_V$-algebra in $\ul\C$, which is an equivariant
analog of a $\EE_n$-algebra.
The genuine equivariant factorization homology is a functor
\begin{equation*}
\ul\mfld^{G,V-fr} \to \ul\C, \, M \mapsto \int_M \ul{A},
\end{equation*}
which satisfies the following properties
\begin{enumerate}
\item It is $G$-symmetric monoidal.
  \item It sends $G$-collar decompositions to relative tensor products\footnote{here the relative tensor product is given by a two sided bar construction $|Bar(\int_{M_1} \ul{A}, \int_{M_0 \times \R} \ul{A}, \int_{M_2} \ul{A} )|$ in $\C_{[G/G]}$.}
    \[
      M_1 \cup_{M_0 \times \R} M_2 \mapsto 
      \int_{M_1} \ul{A} \otimes_{ \int_{M_0 \times \R} \ul{A} } \int_{M_2} \ul{A} 
    \]
  \item It sends sequential unions to colimits
    \[
      M = \cup_{i=0}^\infty M_i \mapsto \colim_i \int_{M_i} \ul{A} 
    \]
    where the $G$-submanifolds $M_i \subseteq M$ satisfy $\ol{M_i} \subset M_{i+1}$.
  \end{enumerate}
Let $\Fun^{G-\otimes}(\ul\mfld^{G,V-fr}, \ul\C)$ denote the full subcategory of
$G$-symmetric monoidal functors and $ \mathcal{H}(\ul\mfld^{G,V-fr}, \ul\C)$
denote the full subcategory of genuine factorization homologies.

\medskip
Throughout this paper we use several contexts of this construction:
\begin{enumerate}
  \item In \cref{sec:NAPD}, $\ul{\C}$ is the category of $G$-spaces.
  \item In \cref{sec:Thom_GFH}, $\ul{\C}$ is the category of $G$-spaces with a $G$-map to
   $\Pic(\ul\Sp^G)$, where $\Pic(\ul\Sp^G)$ is the $G$-space classifying equivariant stable spherical fibrations.
  \item In \cref{sec:computation}, $\ul{\C}$ is the category of genuine $G$-spectra.
  \end{enumerate}

\subsection{$V$-framed $G$-manifolds and $G$-disks}
 A smooth $G$-manifold is a smooth manifold with a $G$-action by diffeomorphisms. 
  We will assume all $G$-manifolds to be smooth and drop the word in this paper.
\begin{mydef}
  Let  $\mfld^G$ be the topological category of $n$-dimensional $G$-manifolds and
  $G$-equivariant open embeddings, endowed with the open compact topology.
  
  Applying the coherent nerve construction to $\mfld^G$ defines an $\infty$-category.
  By abuse of notation we will denote this $\infty$-category by $\mfld^G$.

  Disjoint union of $G$-manifolds defines a symmetric monoidal structure on $\mfld^G$.
\end{mydef}

\begin{mydef}
  A $V$-framing of a $G$-manifold $M$ is an equivariant isomorphism of vector bundles 
  \[
    \phi_M \colon TM \xto{\=} M \times V.
  \]
\end{mydef}

The tangent bundle of a $G$-manifold $M \in \mfld^G$ is a $G$-vector bundle, classified by a $G$-map $\tau_M \colon M \to BO_n(G)$ to the classifying space of $G$-vector bundles of rank $n$.
We can alternatively describe a $V$-framing on $M$ as a homotopy lift
\begin{equation}
  \begin{tikzcd}\label{diag:V-framing}
      & \ast \ar[d,"V"] \\
    M \ar[r,"\tau_M"] \ar[ur,"\phi_M"] & BO_n(G),
  \end{tikzcd}
\end{equation}
where $V \colon \ast \to BO_n(G)$ is the $G$-map classifying $V$, considered as a $G$-vector bundle over a point.
Note that the diagram above is precisely a morphism in $\Top^G/BO_n(G)$, the $\infty$-category of $G$-spaces over $BO_n(G)$.

Since tangent bundles pull back along open embeddings, the assignment $M \mapsto \tau_M$ can be assembled into a functor of $\infty$-categories $\tau \colon \mfld^G \to \Top^G$, see \cite[sec. 3.2]{GFH} for details.

\begin{mydef}
  The $\infty$-category of $V$-framed $G$-manifolds $\mfld^{G,V-fr}$ is given by the pullback 
  \begin{equation*}
    \begin{tikzcd}
      \mfld^{G,V-fr} \ar[d] \ar[r] \pbcorner & \Top^G / \ast \ar[d,"V"] \\
      \mfld^G \ar[r,"\tau"] & \Top^G / BO_n(G).
    \end{tikzcd}
  \end{equation*}
  One can extend disjoint union of $G$-manifolds to a symmetric monoidal structure on $\mfld^{G,V-fr}$. 
\end{mydef}
One can use the universal property of $BO_n(G)$ to get an explicit description
of a morphism in $\mfld^{G,V-fr}$ of $V$-framed 
$G$-manifolds in terms of trivialization's of $G$-vector bundles.
(See \cite[Sec 3.1]{Zou}. The spaces of morphisms here are the $G$-fixed points of the $G$-space of
  morphisms there.)

\begin{mydef}
  The $G$-manifold $V$ has a canonical $V$-framing, $can_V \colon TV \= V \times V$.
  For $U$ a finite $G$-set endow the $G$-manifold $ U \times V$ with the canonical $V$-framing
  \[
    T (U \times V) = U \times TV \xto{id_U \times can_V} U \times V \times V.
  \]
  Let $\disk^{G,V-fr} \subset \mfld^{G,V-fr}$ be the full subcategory spanned by
  $U \times V$ for all finite $G$-sets $U$.
 The disjoint union restricts to a symmetric monoidal structure on $\disk^{G,V-fr}$.
\end{mydef}

  \begin{mydef}(\cite[sec. 3.3]{GFH})
    The  $G$-$\infty$-category of $V$-framed $G$-manifolds is
    $$\ul{\mfld}^{G,V-fr} = \mfld^{G,V-fr} \times_{\Top^G} \ul\Top^G \fib \OGop.$$ 
  \end{mydef}
  This comes with a functor to $\ul{\mfld}^G$, which can be thought of as forgetting the framing. For a subgroup $H \subset G$, we can consider $V$ as an $H$-representation by restricting the action.
\begin{lem}
  There is an equivalence of $\infty$-categories
  \[
    \ul\mfld^{G,V-fr}_{[G/H]} \iso \mfld^{H,V-fr}, 
  \]
  which restricts to an equivalence  $ \ul\disk^{G,V-fr}_{[G/H]} \iso \disk^{H,V-fr} $.
\end{lem}
\begin{proof}
An object $(M \to G/H,\phi_M) \in \ul\mfld^{G,V-fr}_{[G/H]}$ is a $V$-framed $G$-manifold $(M,\phi_M)$ with a $G$-map $f \colon M \to G/H$.
Then the fiber $M_0 = f^{-1}(eH)$ is a $H$-manifold,
and  $\phi_M|_{M_0}$ is a $V$-framing. On the other hand, let $(M, \phi_M)$ be a $V$-framed $H$-manifold. Then $G \times_H M$
  is a $V$-framed $G$-manifold given by
\begin{equation*}
  T(G \times_H M) \cong G \times_H TM \overset{id \times \phi_M}{
    \longrightarrow } G \times_H(M \times V) \cong (G \times_HM) \times V.
\end{equation*}
And similarly for mapping spaces.
So
$$\ul{\mfld}^{G,V-fr}_{[G/H]} \simeq\mfld^{G,V-fr} \times_{\Top^G} \ul\Top^G_{[G/H]} \simeq \mfld^{G,V-fr} \times_{\Top^G} \Top^G/(G/H) \simeq \mfld^{H,V-fr}.$$
\end{proof}

The disjoint union gives a $G$-symmetric monoidal structure to the $G$-$\infty$-categories $\ul\disk^{G,V-fr}$ and
$\ul\mfld^{G,V-fr}$. (For details, see \cite[Sec 3.4]{GFH}.) The functor which forgets the framing extends to a $G$-symmetric monoidal functor $\ul\mfld^{G,V-fr,\sqcup} \to \ul\mfld^{G,\sqcup}$. We have $\ul\mfld^{G,V-fr,\sqcup}_{[G/H]} \simeq
\mfld^{H,V-fr}$. For $K \subset H \subset G$, the restriction maps $\mfld^{H,V-fr,\sqcup} \to
\mfld^{K,V-fr,\sqcup}$ are restrictions of actions; the induction maps
$\mfld^{K,V-fr,\sqcup} \to \mfld^{H,V-fr,\sqcup}$ are $K\times_H -$.

\subsection{$\EE_V$-algebras}
The notion of an $\EE_n$-algebra in symmetric monoidal $\infty$-category admits
an equivariant refinement, called an $\EE_V$-algebra. 
In \cref{sec:v-fold-loop} we will see two examples of $\EE_V$-algebras, $G$-commutative algebras and $V$-fold loop spaces.
\begin{mydef}[$\EE_V$-algebras] \label{def:Vdisk_alg}
  Let $V$ be a real $n$-dimensional representation of $G$ and $\ul\C$ a $G$-symmetric monoidal $G$-$\infty$-category.
  An {$\EE_V$-algebra} in $\ul\C$ is a $G$-symmetric monoidal functor 
  \begin{align*}
    \ul{A} \colon    \ul\disk^{G,V-fr,\sqcup} \to \ul\C^\otimes.
  \end{align*}
 Let $\Alg_{\EE_V}(\C)$ denote the $\infty$-category of $G$-symmetric
  monoidal functors $\Fun^{G-\otimes}(\ul\disk^{G,V-fr}, \ul\C)$. 
\end{mydef}
\begin{rem}
  In \cite{GFH} the definition above is referred to as a \myemph{$V$-framed $G$-disk algebra}, while the notion of an $\EE_V$-algebra is reserved for algebras over the $G$-$\infty$-operad $\EE_V$, constructed from the $G$-operad of little $V$-disks using a genuine operadic nerve construction of \cite[ex. 6.5]{Bonventre}.
  In this paper we chose not to distinguish between $\EE_V$-algebras and $V$-framed $G$-algebras, since these notions coincide. 
  Indeed, \cite[lem. 3.7.2, prop. 3.9.8]{GFH} shows that $\ul\disk^{G,V-fr}$ is the $G$-symmetric monoidal envelope of $\EE_V$.
\end{rem}

\subsection{Genuine equivariant factorization homology}
We define genuine equivariant factorization homology as a $G$-left Kan
  extension. (For details, see \cite[sec. 4.1]{GFH} and \cite[sec. 10]{Expose2}.)
\begin{mydef}
  [Genuine equivariant factorization homology]
  \label{defn:genu-equiv-fh}
  Let $\ul{A} \in \Alg_{\EE_V}(\ul{\C})$ be an $\EE_V$-algebra in a presentably $G$-symmetric monoidal $\infty$-category $\ul\C$.
  Define
  \[
    \int \ul{A} \colon \ul\mfld^{G,V-fr} \to \ul{\C}
  \]
  as the $G$-left Kan extension of $\ul{A} \colon \ul\mfld^{G,V-fr} \to \ul\Sp^G$ along the inclusion $\iota \colon \ul\disk^{G,V-fr} \subset \ul\mfld^{G,V-fr} $.
\end{mydef}
If $M \in \mfld^{G,V-fr}$ then 
\begin{equation}
\label{eq:FHdefn}
\int_M \ul{A}  \simeq G-\colim \left(
  \ul\disk^{G,V-fr}_{/\ul{M}} \to \ul\disk^{G,V-fr} \xto{\ul{A}} \ul\C
  \right)
\end{equation}
is a $G$-colimit in $\ul\C$.

We can express $\int_M \ul{A}$ as a colimit in the $\infty$-category $\ul\C_{[G/G]}$, which requires some preparation.
The functor
\[
  \Cat_{\infty,G} \to \Cat_\infty, \quad \ul\E \mapsto \ul\E_{[G/G]}
\]
has a left adjoint given by $\E \mapsto ( \E \times \OGop \to \OGop)$.
The counit of this adjunction is a natural $G$-functor
\[
  \ul\E_{[G/G]} \times \OGop \to \ul\E, \quad (x,G/H) \mapsto x(G/H)
\]
where $x(G/H)$ is the restriction of $x$ along the forgetful functor $\ul\E_{[G/G]} \to \ul\E_{[G/H]}$.
 We show that this $G$-functor preserves $G$-colimits for $\ul{\E}=\ul\disk^{G,V-fr}_{/\ul{M}}$.
\begin{lem}  \label{cofinal_disks_over_M_restrictions}
  The restriction map
\begin{equation}
\label{eq:diskCofinal}
    (\disk_G)_{/M} \to  (\disk_H)_{/M}
\end{equation}  is cofinal for every $H<G$.
\end{lem}
\begin{proof}
  Let $\D \subset \left(\disk_H\right)_{/M} $ denote the full subcategory spanned those $H$-embeddings $\varphi \colon D \to M$ for which the induced $G$-map
  \[
    G \times_H D \to M
  \]
  is an embedding.
  Observe that the inclusion $\D \subset \left(\disk_H\right)_{/M} $ is an equivalence.
  Indeed, every $\varphi \colon D \to M$ in $\left(\disk_H\right)_{/M}$ is
  equivalent to an object of $\D$ by decresing the radius of the disks.
  
  The restriction functor \eqref{eq:diskCofinal} clearly factors through $\D$, and 
  \[
    \left(\disk_G\right)_{/M} \to \D 
  \]
  is right adjoint, hence cofinal; its left adjoint is given by 
  \[
    \D \to \left(\disk_G\right)_{/M}, \quad ( \varphi \colon D \to M) \mapsto (G \times_H D \to M).
  \]
  The cofinality of \eqref{eq:diskCofinal} follows as it is a composition of a cofinal functor and an equivalence.
\end{proof}

\begin{prop}
  Let $M \in \mfld^{V-fr}_G$ be a $V$-framed $G$-manifold and $\ul{A} \in
  \Alg_{\EE_V}(\ul{\C})$ be an $\EE_V$-algebra.   Then the genuine equivariant factorization homology $\int_M \ul{A}$ is equivalent to the colimit of 
  \[
     (\disk^{V-fr}_G)_{/M} \to\disk^{V-fr}_G \xto{\ul{A}_{[G/G]}} \C_{[G/G]}. 
    \]
\end{prop}

\begin{proof}
   We first show that the $G$-functor
  \[
    (\disk^{V-fr}_G)_{/M} \times \OGop \to \ul\disk^{G,V-fr}_{/\ul{M}}
  \]
  is $G$-cofinal.  
  By \cite[prop. 4.2.6]{GFH} it is enough to prove that 
  \[
    (\disk_G)_{/M} \times \OGop \to \ul\disk^{G}_{/\ul{M}}
  \]
  is $G$-cofinal.
  However, $G$-cofinality is equivalent to fiberwise cofinality by
  \cite[thm. 6.7]{Expose2}, which is proved in
  \cref{cofinal_disks_over_M_restrictions}.

  So, the $G$-colimit in \eqref{eq:FHdefn} is equivalent to the $G$-colimit
  of a diagram 
\begin{equation*}
  (\disk^{V-fr}_G)_{/M} \times \OGop \to \ul{\C};
  \end{equation*} the conclusion then
  follows from \cite[prop. 5.8]{Expose2}.

\end{proof}

\subsection{Equivariant Morse theory }
Equivariant Morse theory of $G$-manifolds was developed by Wasserman in \cite{Wasserman}, for a compact Lie group $G$.
We will only use this theory for $G$ a finite group, which simplifies the resulting equivariant handlebody decompositions \footnote{in a nutshell, handlebundles are just disjoint unions of handles}.
Wasserman uses a $G$-invariant Morse function to construct finite equivariant handle decompositions for compact $G$-manifolds.  
The same techniques can be used to decompose a non-compact $G$-manifold $M$ (constructing $M$ inductively by attaching a finite number of handles at each stage), providing that the Morse function satisfies some reasonable assumptions.

Recall that a $G$-invariant Morse function on a $G$-manifold $M$ is simply a Morse function $f \colon M \to \R$ which is $G$-invariant.
It will be useful to assume that for every $a\in \R$ the submanifold $f^{-1}(-\infty,a]$ is compact.
Such a function exists by the results of~\cite{Hepworth}.
\begin{thm}[Hepworth] \label{Hepworth_Morse}
  Every $G$-manifold $M$ admits a $G$-invariant Morse function $M \to \R$ such that $f^{-1}(-\infty,a]$ is compact for each $a\in \R$.
\end{thm}
\begin{proof}
  By \cite[ex. 3.2]{Hepworth} $M$ defines a differentiable Deligne-Mumford stack $\mathfrak{M}$.
  Apply \cite[cor. 6.12]{Hepworth} to $\mathfrak{M}$.
\end{proof}
Since $G$ is finite, the Morse lemma implies that the critical points of $f$ are isolated and decompose into $G$-orbits (see \cite[cor. 4.8]{Hepworth}).
We will always assume that our $G$-manifold is equipped with a complete $G$-invariant Riemannian metric.
\begin{lem}
  Every a $G$-manifold $M$ admits a complete $G$-invariant Riemannian metric.
\end{lem}
\begin{proof}
  This follows from examining the proof of theorem 1 of~\cite{nomizu1961existence}.
  Let $(,)$ be a $G$-invariant Riemannian metric on $M$.
  If $(,)$ is not complete, then~\cite{nomizu1961existence} construct modified metric $(-,-)' = \frac{1}{r(x)}(-,-)$.
  By inspection the function $r(x) = \sup\set{ r>0 \mid \set{y|d(x,y)<r} \text{ is relatively compact} }$ is $G$-invariant, hence the complete metric $(-,-)'$ is also $G$-invariant.
\end{proof}

We can use $G$-Morse functions as above to express any $G$-manifold as a sequential union.
\begin{construction} \label{G_Morse_decomp}
  Let $M$ be a $G$-manifold and $f \colon M \to \R$ a $G$-invariant Morse function as in \cref{Hepworth_Morse}.
  Choose a sequence of regular values $a_0 < a_1 < \cdots$ such that $f^{-1}(-\infty,a_0] = \emptyset$ and the interval $[a_{i-1},a_i]$ contains a single critical value for every $i>0$.
  Define $G$-submanifolds 
  \[
    M_i = \set{ x\in M | f(x) < a_i}  \subseteq M.
  \]
  Then each $M_i$ is the interior of a compact $G$-manifold with boundary $\overline{M_i} = \set{ x\in M | f(x) \leq a_i}$ and $M= \cup_i M_i$ is a sequential union of $G$-manifolds.
  By compactness of the $G$-manifolds $M_i$, the critical points $U$ of a critical value $c$ form a finite $G$-set.
\end{construction}
The gap between $M_{i-1}$ and $M_i$ can be described using equivariant handle attachment.

\paragraph{$G$-handles and the attaching lemma}
We will use a slight variant of Wasserman's treatment of equivariant handlebody decomposition.
Namely, we work with open $G$-manifolds and $G$-handles, which are the interiors of Wasserman's $G$-manifolds and $G$-handles.
\begin{mydef}\label{def:open_G_handles}
  Let $U$ be a finite $G$-set and $E \to U$ be a $G$-vector bundle with a $G$-invariant metric.
  Define $D(E)$ to be the $G$-manifold
  \[
    D(E) = \set{v \in E \mid |v| <1}.
  \]
  An \myemph{(open) $G$-handle bundle} is a $G$-manifold of the form $D(E) \times_U D(E')$.
\end{mydef}
\begin{rem}
  Note that as a manifold $D(E) \times_U D(E')$ is simply the disjoint union of handles
  \[
    D(E) \times_U D(E') = \bigsqcup_{x\in U} D(E_x) \times D(E'_x).
  \]
\end{rem}
\begin{rem}
  Wasserman defines a $G$-handle bundle of type $(E,E')$ as a $G$-manifold with corners
  \[
    E(1) \times_U E'(1) = \set{ (x,y) \in E \oplus E' \mid |x|\leq 1, |y|\leq1}.
  \]
   The boundary of such a $G$-handle bundle is 
   \( \big(\partial E(1) \times_U E'(1)\big) \cup \big(E(1) \times_U \partial E'(1)\big) \),
   and the $G$-handle bundle is attached along $\partial E(1) \times_U E'(1)$.
  Our definition of an open $G$-handle agrees with the \myemph{interior} of Wasserman's handle-bundle.
\end{rem}
\begin{ex}[$G$-Handle-bundle of critical orbits]\label{G_handle_crit_orb}
  Let $M$ be a $G$-manifold equipped with a $G$-invariant Morse function $f \colon M \to \R$.
  Assume $f$ has a critical value $c\in \R$ with a isolated critical points.
Write $U$ for the $G$-set of critical points of value $c$.
  For each $x\in U$ the tangent space at $x$ decomposes into negative definite and positive definite subspaces, $T_x M = E_x \oplus E'_x$. 
  These decompositions are preserved by the action of $G$ because $f$ has a $G$-invariant Hessian, so we get a decomposition of $G$-vector bundles over $U$
  \[
    TM |_U = E \oplus E',
  \] 
  to which we associated an open $G$-handle-bundle $D(E) \times_U D(E')$.
  The exponential map identifies $D(E) \times_U D(E')$ (suitably normalized) to
  an open $G$-sub-manifold of $M$, see \cref{fig:G_Morse}. 
  Let $\delta > 0$ be a small number. Write
  \[
    M_0 = \set{ x\in D(E) \times_U D(E') \mid f(x)=c-\delta}.
  \]
  Then flowing along the (suitably normalized) gradient of $f$ defines equivariant diffeomorphisms 
\begin{align*}
  \set{ x\in D(E) \times_U D(E') \mid c-2\delta < f(x) < c-\delta}
  & \= M_0 \times (c-2\delta,c-\delta)  \= M_{0} \times \R \\
  \set{ x\in D(E) \times_U D(E') \mid f(x) > c-2\delta}
  & \= D(E) \times_U D(E') 
\end{align*}
Via the diffeomorphisms, $M_0 \times \R $ is an open $G$-sub-manifold of the
$G$-handle-bundle $D(E) \times_U D(E')$.
\end{ex}

\paragraph{Equivariant handle-body decomposition}
Let $f \colon M \to \R$ be a $G$-invariant Morse function with regular values $a<b$ and a unique critical value in the interval $[a,b]$.
Wasserman uses a $G$-invariant version of the attaching lemma to show that $\overline{M_b}$ is $G$-diffeomorphic to $\overline{M_a}$ with a $G$-handle bundle attached.
Passing to the interior of these $G$-manifolds, we get the following variant.
\begin{figure}[h]
  \centering
  \includegraphics[width=0.75\linewidth]{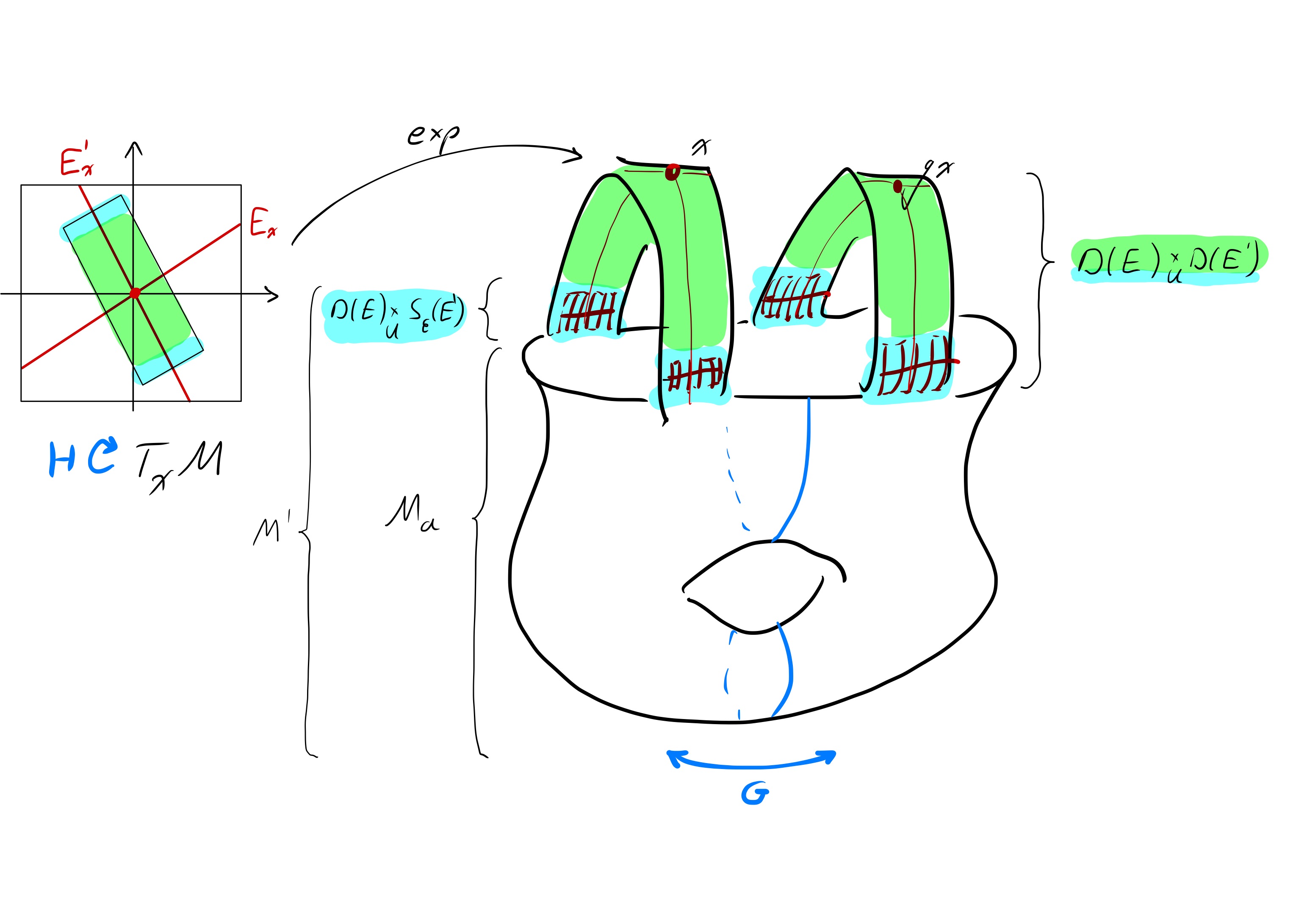}
  \caption{Attaching a $G$-handle $D(E) \times_U D(E')$ of a critical point $x \in U$ with stabilizer $H$ and orbit $U=\set{x,gx}$}
  \label{fig:G_Morse}
\end{figure}
\begin{prop} \label{G_Morse_handle}
  Let $f \colon M \to \R$ be a $G$-invariant Morse function with regular values $a<b$.
  Assume $f$ has a single critical value $a<c<b$ with finite isolated critical points $U$ of value $c$, and let $TM|_U = E \oplus E'$ be the decomposition of the tangent bundle into positive definite and negative definite subspace.
 
  Then $M_b= \set{x \in M \mid f(x)<b}$ is equivariantly diffeomorphic to a $G$-collar decomposition 
  \[
    M_a \bigcup_{M_{0} \times \R} D(E) \times_U D(E'),
  \]
  where $M_a = \set{x \in M \mid f(x)<a}$, $D(E) \times_U D(E')$ is an open $G$-handle bundle (see
  \cref{def:open_G_handles}) and $M_{0} \times \R \subset D(E) \times_U D(E')$ is an open
  $G$-sub-manifold (see \cref{G_handle_crit_orb}).
\end{prop}
\begin{proof}
  The proof follows from \cite[thm. 4.6]{Wasserman} after choosing a complete $G$-invariant Riemannian metric on $M$.
Since $f$ has no critical values in $[a,c)$, without loss of
    generality we assume that $a$ is sufficiently close to $c$, or $a =
    c-\delta$. Wasserman shows that $\overline{M_b}$  is equivariantly
  diffeomorphic to the disjoint union of $\overline{M_a}$ and
  $E(1) \times_U E'(1)$ along  $\partial E(1) \times_U E'(1)$. 
  Passing to the interior, we see that $M_b$ is equivariantly diffeomorphic to the union $M' \bigcup_{M_0 \times \R} (D(E) \times_U D(E'))$, where
  \[
      M' = \set{ x\in M \mid (f(x) < c- 2\delta) \text{ or } ( x \in D(E) \times_U
        D(E') \text{ and } f(x) < c-\delta ). }
  \]
  See \cref{fig:G_Morse}.
  Observe that $M'$ is equivariantly diffeomorphic to $M_a$. This completes the proof.
\end{proof}

\subsection{Axiomatic characterization}
In this section, we prove the following result, which is for later use  of genuine factorization homology
\begin{thm} \label{cor:GFHaxiom}  
  Let $F \colon \ul\mfld^{G,V-fr} \to \ul\C$ be a $G$-symmetric monoidal functor.
  In order to show that $F$ is a genuine equivariant factorization homology theory, it is enough to check that 
  \begin{enumerate}
    \item $F$ respects sequential unions of the form $M=\cup_i M_i$ for a
      $V$-framed $G$-manifold $M$ and 
      $M_i = \set{ x\in M | f(x) < a_i}$,
      where $f \colon M \to \R$ is a $G$-invariant, bounded below proper Morse
      function with isolated critical values, and $a_0<a_1< \cdots$ is a sequence of regular values.
    \item $F$ satisfies $\otimes$-excision for $G$-collar decompositions of the form 
      \[ 
        M = M' \cup_{M_0 \times \R} M''
      \]
      where $M$ is the interior of a compact $V$-framed $G$-manifold with boundary (possibly with empty boundary), 
      $M'$ and $M''$ are open $G$-submanifolds and $M_0$ is a closed $G$-submanifold of codimension 1.
  \end{enumerate}
  Indeed, the open $G$-submanifolds $M', M'' \subseteq M$ can be taken to be
\begin{equation*}
    M'= M_{i-1}, \quad M''= D(E) \times_U D(E'), \quad M_0 \times \R \text{ as
      in \cref{G_handle_crit_orb}}.
\end{equation*}
\end{thm}

To give the proof, we review the proof of the following theorem.
\begin{thm}
  (\cite[thm. 6.0.2]{GFH})
  \label{thm:GFHaxiom}
  Let $\ul\C$ be a presentably $G$-symmetric monoidal $G$-$\infty$-category.
  The functor
  \begin{align*}
    & \int \colon \EE_V-\Alg(\ul{\C}) \to \Fun^{G-\otimes}(\ul\mfld_G^{V-fr}, \ul\C), \\
    & \ul{A} \mapsto \left( \int \ul{A} \colon \ul\mfld^{G,V-fr} \to \ul\C \right)
  \end{align*}
  is fully faithful with essential image $\mathcal{H} (\ul\mfld^{G,V-fr},\ul\C)$ and homotopy inverse given by the restriction along $\ul\disk^{G,V-fr} \to \ul\mfld^{G,V-fr}$.
\end{thm}
\begin{proof}[Idea of proof]
  First, observe that $\int$ is fully faithful.
  Indeed, it is an operadic $G$-left Kan extension along the fully faithful inclusion $\ul\disk^{G,V-fr} \to \ul\mfld^{G,V-fr}$.

  The hard part of the proof is showing that 
  $\mathcal{H} (\ul\mfld^{G,V-fr},\ul\C)$ is a $G$-factorization homology theory.
  This is done in \cite[sec. 5.2, 5.3]{GFH} in the framework of parametrized $\infty$-category theory.

  It remains to show that the counit of the adjunction
  \[
    \int \colon \EE_V-\Alg(\ul\C) \adj \mathcal{H}(\ul\mfld^{G,V-fr}, \ul\C) 
  \]
  is a natural equivalence.
  Let $F \in \mathcal{H}(\ul\mfld^{G,V-fr}, \ul\C)$ be a genuine equivariant factorization homology theory, and $\ul{A} \colon \ul\disk^{G,V-fr} \to \ul \C$ its restriction to an $\EE_V$-algebra.
  We have to show that the counit map
  \[
    \eta \colon \int_M \ul{A} \to F(M)
  \]
  is an equivalence for every $M \in \ul\mfld^{G,V-fr}_{[G/H]} \simeq \mfld_H^{V-fr}$.
  Since the subgroup $H\leq G$ will remained fixed we can assume $M \in \mfld_G^{V-fr}$ without loss of generality.
  Note that at this point we know that $\eta$ is a natural transformation of genuine equivariant factorization homology theories.
  The idea is to use equivariant Morse theory to get a geometric decomposition of $M$, and then inductively prove that the counit map is an equivalence.

  Let $M = \cup_i M_i$ be the equivariant Morse decomposition of \cref{G_Morse_decomp}.
  Naturality of $\eta$ and the fact that $F$ and $\int \ul{A}$ both respect sequential unions imply that it is enough to show that 
  \[
    \int_{M_i} \ul{A} \to F(M_i)
  \]
  is a weak equivalence, where $M_i$ is the interior of the compact $G$-manifold $\overline{M_i}$.
  Next, by \cref{G_Morse_handle}, $M_i$ is equivariantly diffeomorphic to a $G$-collar decomposition 
  \[
    M_{i-1} \bigcup_{ M_0 \times \R } D(E) \times_U D(E').
  \]
  Since $\eta$ is a natural transformation between functors that satisfy $\otimes$-excision we get a commutative diagram
  \begin{equation}
    \begin{tikzcd}
      \int_{M_{i-1}} \ul{A} 
      \otimes_{ \int_{M_0 \times \R} \ul{A} }
      \int_{D(E) \times_U D(E')} \ul{A} \ar[d,"\simeq"] \ar[r] &
      F(M_{i-1})
      \otimes_{F(M_0 \times \R)}
      F(D(E) \times_U D(E') ) \ar[d,"\simeq"] \\
      \int_{M_i} \ul{A} \ar[r,"\eta_{M_i}"] & F(M_i).
    \end{tikzcd}
  \end{equation}
  Consider the maps
  \begin{align*}
    \eta_{M_{i-1}} \colon & \int_{M_{i-1}} \ul{A} \to F(M_{i-1}), \quad 
    \eta_{D(E) \times_U D(E')} \colon \int_{D(E) \times_U D(E')} \ul{A} \to F(D(E) \times_U D(E')) \\
                          & \eta_{M_0 \times \R} \colon \int_{M_0 \times \R} \ul{A} \to F(M_0 \times \R )
  \end{align*}
  inducing the top morphism in the diagram above.
  The first map is an equivalence by induction and the second is an equivalence since the $G$-handle $D(E) \times_U D(E') \subset M$ is a $V$-framed $G$-disk and $\ul{A} = F|_{\ul\disk^{G,V-fr}}$ agrees on $V$-framed $G$-disks by construction.

  It remains to show that 
  \begin{align} \label{eq:eta_G_collar}
    \eta_{M_0 \times \R} \colon \int_{M_0 \times \R} \ul{A} \to F(M_0 \times \R)
  \end{align}
  is an equivalence.
The insight here is that $G$ acts trivially on $\R$ in the $G$-collar $M_0 \times \R$.\footnote{
    Note that $M_0 \times \R$ has a $V$-framing induced by the inclusion into $M$.
    However we do not (and cannot) assume that $ M_0 \times \R$ is a product of framed $G$-manifolds.
  }

  We can therefore finish the proof by wrapping the above argument in yet another induction.
  Consider $V$-framed $G$-manifolds of the form $N = N' \times\R^k$, with $G$ acting trivially on $\R^k$.
  Go back to the beginning of the argument, and prove that 
  \[
    \eta \colon \int_{N} \ul{A} \to F(N) 
  \]
  is an equivalence by induction on $\dim N'$.
  When $\dim N'=0$ the $G$-manifold $ N = D(E) \simeq E$ is a $G$-vector bundle over a finite $G$-set $U$, hence a $V$-framed $G$-disk, so $\eta_N$ is an equivalence.
  For $\dim N'>0$, we use $G$-Morse theory use a $G$-invariant Morse function on $N'$ to decompose $N$ as above, and use the new inductive hypothesis to ensure that~\eqref{eq:eta_G_collar} is an equivalence.
\end{proof}

\begin{proof}(Proof of \cref{cor:GFHaxiom})
 Note that $F$ is a genuine $G$-equivariant factorization homology theory in the sense of \cref{defn:genu-equiv-fh} if for every $H \leq G$ the symmetric monoidal functor
 \[
   F_{[G/H]} \colon \mfld_H^{V-fr} \simeq \ul\mfld^{G,V-fr}_{[G/H]} \to \ul\C_{[G/H]}
  \]
  satisfies $\otimes$-excision and respects sequential unions.
Examining the role of $\otimes$-excision and sequential unions in the above
proof, we see that the axioms are only invoked for decompositions of the form
that arise in equivariant Morse theory, which is exactly the content of \cref{cor:GFHaxiom}.
\end{proof}

\section{Equivariant nonabelian Poincar\'e duality}
\label{sec:NAPD}
\newcommand{\mapcX}[1]{\ensuremath{\mathrm{Map}_{*}((#1)^+,X)}}
In this section we prove the equivariant version of nonabelian Poincar\'e duality
regarding equivariant factorization homology (\cref{thm:NPD}).
Our approach is motivated by  \cite[sec. 4]{AF}. 

\begin{thm}[Equivariant nonabelian Poincar\'e duality] \label{thm:NPD}
   For a $V$-framed $G$-manifold  $M$ and  $X \in \mathbf{Top}^G_{*}$  such that
$\pi_k(X^H) = 0$ for all subgroups $H \subset G$ and $k < \mathrm{dim}(V^H)$, there is a natural equivalence
of $G$-spaces
$$\int_M \Omega^{V}X \simeq \mathrm{Map}_{*}(M^+, X).$$
Here, $M^+$ is the one-point-compactification of $M$.
\end{thm}
 We first explain the functors in the statement. In
 \cref{cons:Mapc_X_as_top_functor} and \cref{Mapc_X_as_top_functor}, we construct the $\infty$-functor
  $\Map_*((-)^+, X)$ as the underlying functor of the $G$-symmetric monoidal $\infty$-functor
  $$\ul{\mathrm{Map}}_*((-)^+,X) \in \mathrm{Fun}^{\otimes}_{G}(\ulmfld^{G,\sqcup},\ulTopG_{*})$$
  at the fiber over $G/G$.
  Precomposing with the forgetful $G$-functor $ \ulmfld^{G,V-fr,\sqcup} \to \GmfldD $,
  we get
\begin{equation}\label{eq:map-to-X-framed}
\ul{\mathrm{Map}}_*((-)^+,X) \in
 \mathrm{Fun}^{\otimes}_{G}(\ulmfld^{G,V-fr,\sqcup},\ulTopG_{*}),
\end{equation}
whose underlying functor over $G/G$ we still denote by $\Map_{*}((-)^{+},X)$.
The functor $\Omega^VX$ is the restriction of $\ul{\mathrm{Map}}_*((-)^+,X)$ to
$V$-framed $V$-disks and thus an $\EE_V$-algebra in $\ulTopG$. (For more
details, see \cref{EValg_of_V_loop_space}.)

\subsection{Equivariant Atiyah duality and equivariant Poincar\'e duality}
Assuming \cref{thm:NPD}, we deduce a version of the equivariant Atiyah duality theorem (\cref{cor:AD}) and the equivariant Poincar\'e duality theorem (\cref{cor:PD}) for $V$-framed $G$-manifolds.
Atiyah duality for $G$-manifolds has also previously been studied in \cite[III.5]{lewis1986equivariant}.

\begin{prop}\label{cor:AD}
  Suppose $M$ is a $V$-framed $G$-manifold and $E$ is $G$-spectrum such that
  $\pi_k ^H(\Sigma^V E) = 0$ for $k < \mathrm{dim}(V^H)$.
  Then there is a $G$-equivalence:
\begin{equation*}
\Omega^\infty (\Sigma^\infty_+ M \otimes E) \simeq \mathrm{Map}_{*}(M^{+}, \Omega^{\infty}\Sigma^VE).
\end{equation*}
In particular, taking $E = \Sigma^{\infty} S^0$ to be the $G$-sphere spectrum, we recover Atiyah duality for $M$:
\begin{equation*}
\Omega^\infty (\Sigma^\infty_+ M) \simeq \mathrm{Map}_{*}(M^{+}, \Omega^{\infty}\Sigma^\infty S^V).
\end{equation*}
\end{prop}

\begin{proof}
  We consider $\Omega^\infty E$, the $G$- infinite loop space of $E$. 
Denote $X = \Omega^{\infty} \Sigma^VE$.
Then we have $\Omega^\infty E \simeq \Omega^V X$ as $\EE_V$-algebras.
Moreover,  $\pi_k ^H (X) \cong \pi_k ^H( \Sigma^V E)$
for $k \geq 0$. Therefore by assumption, $X$ satisfies the connectivity hypotheses in \cref{thm:NPD},
and we obtain
\begin{equation}
\label{eq:pd}
\int_M \Omega^\infty E \simeq \mathrm{Map}_{*}(M^{+}, X).
\end{equation}

We claim that there is a $G$-equivalence:
\begin{equation}
\label{eq:compareFH}
\int_M \Omega^\infty E \simeq \Omega^\infty (\Sigma^\infty_+ M \otimes E).
\end{equation}
First, $\Omega^\infty ( \Sigma^\infty_+(-) \otimes E)$ is a $G$-factorization
homology on $V$-framed $G$-manifolds, as we show later in \cref{lem:HB}. Second,
the factorization homology theories $\int_{-} \Omega^\infty E$ and $\Omega^\infty( \Sigma^\infty_+ (-) \otimes
E) $ have the same coefficients: their coefficients are the $\EE_V$-algebras
$\Omega^\infty E$ and
$\Omega^\infty( \Sigma^{\infty}_+V \otimes E) $. The map which contracts $V$ to a point,
$\Omega^\infty( \Sigma^{\infty}_+V \otimes E)  \to \Omega^\infty E$, is a map of $\EE_V$
-algebras, as it is a map of $G$-infinite loop spaces. It is an equivalence of $G$-spaces, and
therefore an equivalence of $\EE_V$-algebras.
So the two factorization homology theories in~\eqref{eq:compareFH} agree and we obtain the equivalence.

The claim follows from combining \eqref{eq:pd} and
\eqref{eq:compareFH}.
To apply it to
$E=\Sigma^{\infty}S^0$, we check when $k < \mathrm{dim}(V^H)$, the tom Dieck
splitting and $k < \mathrm{dim}(V^K)$ for $K \subset H$ gives
\begin{align*}
  \pi_k^H(\Sigma^\infty S^V)
  \cong & \oplus_{[K \subset H]} \pi_k(\Sigma^{\infty} EW_{H}K_+ \wedge_{W_HK}
          S^{V^K}) = 0.
          \qedhere
\end{align*}
\end{proof}

\begin{prop}
  \label{cor:PD}
  Suppose $M$ is a $V$-framed $G$-manifold and $\underline{B}$ is a Mackey functor.
Then
$$\mathrm{H}_{\star} (M; \underline{B}) \cong \widetilde{\mathrm{H}}^{V-\star} (M^+;
\underline{B}).$$
In particular, if $M$ is closed, then
$\mathrm{H}_{\star} (M; \underline{B}) \cong \mathrm{H}^{V-\star} (M;
\underline{B})$.
\end{prop}
\begin{proof}
  We can give $S^V$ an $H$-CW decomposition with the lowest cells
  other than the base point in dimension $\mathrm{dim}(V^H)$. So 
  we have $\pi_k ^H( \Sigma^V \mathrm{H}\underline{B}) \cong \tilde{H}^H _k (S^V ; \underline{B}) = 0$
  when $k < \mathrm{dim}(V^H)$. Therefore we can take $E$  in \cref{cor:AD} to be the Eilenberg--MacLane spectrum $\mathrm{H}\underline{B}$ and get
\begin{equation*}
\Omega^\infty (\Sigma^\infty_+ M \otimes \mathrm{H}\ul{B}) \simeq \mathrm{Map}_{*}(M^{+}, K(\underline{B},V)).
\end{equation*}
The desired Poincar\'e duality follows from taking homotopy groups on both sides and identifying:
\begin{align*}
  \pi_{\star}\Omega^\infty (\Sigma^\infty_+ M \otimes \mathrm{H}\ul{B})
  & \cong \mathrm{H}_{\star}(M, \underline{B}); \\
  \pi_{\star}(\mathrm{Map}_{*}(M^{+}, K(\underline{B}, V)))
  & \cong \widetilde{\mathrm{H}}^{V-\star}(M^+; \underline{B}). \qedhere
\end{align*}
\end{proof}

\begin{lem}\label{lem:HB}
  Let $E$ be a $G$-spectrum.
  Then
  $\Omega^\infty( \Sigma^\infty_+ (-) \otimes E)$ is a $G$-factorization homology theory on $V$-framed $G$-manifolds.
\end{lem}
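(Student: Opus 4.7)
The plan is to verify the three defining conditions of a $G$-factorization homology theory on $V$-framed $G$-manifolds (as axiomatized in \cite{GFH}) for the functor $F(-) := \Omega^\infty(\Sigma^\infty_+(-) \otimes E)$: $G$-symmetric monoidality, preservation of $G$-sequential unions, and $G$-$\otimes$-excision. First I would exhibit $F$ as the composition
\[
\ulmfld^{G,V-fr,\sqcup} \xto{\Sigma^\infty_+} (\ul\Sp^G,\oplus) \xto{(-)\otimes E} (\ul\Sp^G,\oplus) \xto{\Omega^\infty} (\ulTopG,\times),
\]
and then check each axiom using the colimit-preservation properties of the individual factors. Symmetric monoidality is formal: $\Sigma^\infty_+$ takes disjoint unions to wedges, $(-)\otimes E$ preserves wedges by bilinearity of the smash product, and $\Omega^\infty$ converts wedges (coproducts in $\ul\Sp^G$) into Cartesian products thanks to the stability of $\ul\Sp^G$.

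Preservation of $G$-sequential unions is similarly clear from the factorization: $\Sigma^\infty_+$ and $(-)\otimes E$ are left adjoints and preserve all colimits, while $\Omega^\infty$ preserves filtered colimits of $G$-spectra, so the composite $F$ preserves $G$-sequential colimits.

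The main step is $G$-$\otimes$-excision. For a collar decomposition $M \cong M_- \cup_{\R \times N} M_+$, the underlying $G$-space is a homotopy pushout of $M_-, M_+$ along $\R \times N$. Applying $\Sigma^\infty_+$ and then $(-)\otimes E$ (both left adjoints) yields a pushout in $\ul\Sp^G$ with apex $\Sigma^\infty_+ M \otimes E$. What remains is to match $\Omega^\infty$ of this spectral pushout with the bar construction $F(M_-) \otimes^{\mathbb L}_{F(\R \times N)} F(M_+)$ that governs $\otimes$-excision. This matching will be the main obstacle, since $\Omega^\infty$ does not preserve pushouts in general. To handle it, I would exploit that every object in the relevant diagram lies in the essential image of $\Omega^\infty$, namely grouplike $G$-$E_\infty$-spaces, which are equivalent via $\Omega^\infty$ to connective $G$-spectra. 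Because $F(\R \times N)$ is a commutative ($E_\infty$) object in this picture, the relative tensor product of modules over it coincides with a pushout of connective $G$-spectra, and this is exactly what our spectral pushout realizes upon applying $\Omega^\infty$. Assembling these steps gives the desired excision equivalence and completes the verification that $F$ is a $G$-factorization homology theory.
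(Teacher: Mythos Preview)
Your overall strategy—decompose $F$ and verify each axiom via properties of the factors—is exactly the paper's approach, and your treatment of $G$-symmetric monoidality and $G$-sequential unions matches the paper's (the paper additionally makes the forgetful functors $\ulmfld^{G,V-fr}\to\ulmfld^G\to\ulTopG$ explicit in the factorization).

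For $\otimes$-excision the paper argues more directly. Rather than passing to a pushout of spectra, it writes the collar gluing $M=M'\cup_{M_0\times\R}M''$ already in $\ulTopG$ as the geometric realization of the bar simplicial object $[n]\mapsto M'\sqcup(M_0\times\R)^{\sqcup n}\sqcup M''$, asserts that the composite $F$ preserves sifted colimits fiberwise, and uses $G$-symmetric monoidality to convert $\sqcup$ to $\times$ levelwise, obtaining $F(M)\simeq\mathbf{B}(F(M'),F(M_0\times\R),F(M''))$ in one step. Your detour through the equivalence between grouplike $G$-$E_\infty$-spaces and connective $G$-spectra has a gap: for arbitrary $E$ the spectra $\Sigma^\infty_+M'\otimes E$, etc., need not be connective, so under that equivalence the Cartesian bar construction corresponds to the pushout of the \emph{connective covers} $\tau_{\geq 0}(\Sigma^\infty_+(-)\otimes E)$, not to $\Omega^\infty$ of the pushout in $\ul\Sp^G$. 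Since $\tau_{\geq 0}$ is a right adjoint it does not commute with pushouts, and the sentence ``this is exactly what our spectral pushout realizes upon applying $\Omega^\infty$'' does not go through. Your instinct that $\Omega^\infty$ failing to preserve pushouts is the main obstacle is correct; the paper's route sidesteps the connective-cover detour by asserting sifted-colimit preservation of the composite directly from the list of factors.
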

\begin{proof}
One can express $\Omega^\infty( \Sigma^\infty_+ (-) \otimes E)$ as the composition of $G$-functors
\[
  \Omega^\infty( \Sigma^\infty_+ (-) \otimes E) \colon \ulmfld^{G,V-fr}  \to \ulmfld^{G} \xto{fgt} \ulTopG \xto{\Sigma^\infty_+} \ul\Sp^{G} \xto{- \otimes E} \ul\Sp^G \xto{\Omega^\infty} \ulTopG.
\]
Each $G$-functor in the composition extends to a $G$-symmetric monoidal functor:
\begin{enumerate}
  \item The $G$-functor \( \ulmfld^{G,V-fr} \to \ulmfld^G \), forgetting the $V$-framing, is $G$-symmetric monoidal by construction. 
  \item The functor \( fgt \colon \ulmfld^G \to \ulTopG \) is $G$-symmetric monoidal, as it can be defined by the following construction.
   As a functor of topological categories, the forgetful functor  \( \mfld_n \to \Top \) is symmetric monoidal (takes disjoint unions to coproducts).
  Construct the forgetful functor \( fgt \colon \ulmfld^G \to \ulTopG \) by applying the genuine operadic nerve construction (see also \cref{app:top_G_objs}).
  \item The $G$-functor \( \Sigma^\infty_+ \colon \ulTopG \to \ul\Sp^G \) is a $G$-left adjoint, hence strongly preserves $G$-colimits.
  In particular, it extends to a $G$-symmetric monoidal functor with respect to the $G$-coCartesian monoidal structure on both categories. 
  \item Similarly, \( \ul\Sp^{G} \xto{- \otimes E} \ul\Sp^G \) strongly preserves $G$-colimits, and therefore extends to a $G$-symmetric monoidal functor.
  \item The $G$-functor \( \ul\Sp^G \xto{\Omega^\infty} \ulTopG \) is a $G$-right adjoint, and therefore extends to a $G$-symmetric monoidal functor with respect to the \myemph{$G$-Cartesian} monoidal structures. 
    \item Finally, since $\ul\Sp^G$ is a $G$-semi-additive $G$-$\infty$-category, the $G$-Cartesian and $G$-coCartesian monoidal structure are canonically equivalent.
\end{enumerate}
In fact, this decomposition also shows that $\Omega^\infty( \Sigma^\infty_+ (-)
\otimes E):\ulTopG \to \ulTopG$ preserves sifted colimits fiberwise.

We now verify the $\otimes$-excision axiom.
Let $M= M' \cup_{M_0 \times \R} M''$ be a $G$-collar decomposition of $V$-framed $G$-manifolds.
After applying the forgetful functor 
\[
  \ulmfld^{G,V-fr}  \to \ulmfld^{G} \xto{fgt} \ulTopG
\]
the $G$-space $M = M' \coprod_{M_0 \times \R} M'' $ is equivalent to the geometric realization 
\[
  \big\lvert 
  \begin{tikzcd}
    M' \coprod M''  &
    M' \coprod M_0 \times \R \coprod M'' \ar[l, shift left] \ar[l, shift right] &
    M' \coprod M_0 \times \R \coprod M_0 \times \R \coprod M'' \ar[l]  \ar[l, shift left=2]  \ar[l, shift right=2]  &
    \cdots \ar[l, shift left=1]  \ar[l, shift left=3]  \ar[l, shift right=1]  \ar[l, shift right=3] 
  \end{tikzcd} 
  \big\rvert.
\]
Since $\Omega^\infty( \Sigma^\infty_+ (-) \otimes E)$ preserves geometric realizations, we have an equivalence
\[
  \Omega^\infty( \Sigma^\infty_+ (M) \otimes E) \simeq 
  \mathbf{B}\big(\Omega^\infty( \Sigma^\infty_+ (M') \otimes E), 
    \Omega^\infty( \Sigma^\infty_+ (M_0 \times \R) \otimes E),
    \Omega^\infty( \Sigma^\infty_+ (M'') \otimes E)\big) ,
\]
hence $  \Omega^\infty( \Sigma^\infty_+ (-) \otimes E)$ satisfies $\otimes$-excision.
A similar approach verifies $\Omega^\infty( \Sigma^\infty_+ (-) \otimes E) $ respects $G$-sequential unions, hence it is a $G$-factorization homology theory. 
\end{proof}

\subsection{Proof of \cref{thm:NPD}}
We prepare for the proof of  by showing that
the functor $\Map_{*}((-)^+,X)$ 
respects sequential unions and satisfies $\otimes$-excision $H$-equivariantly  for any subgroup
$H \subset G$ in the sense of \cref{cor:GFHaxiom}. Without loss of generality we may work with $H=G$.
We work in topological categories and 
$\Map_{*}((-)^+,X)$ is a topological functor between topological categories.

We start with proving the $G$-sequential union property.
We find it convenient to replace the space $\Map_*(M^+,X)$ with the space of compactly supported maps.
\begin{mydef}
  For a $G$-manifold $M$ and a based $G$-space $X$, let
  \[
    \Map_c(M,X) = \{f \in \Map(M,X)\, | \, \mathrm{supp}(f) \text{ is compact}\}
  \]
  be the space of compactly supported maps. 
  Here, the support of a map $f$ is the closure of the preimage of the complement of the base point.
\end{mydef}

\begin{prop}
  \label{Map_c_vs_Map_plus}
 There is a $G$-equivalence $\Map_c(M,X) \iso
  \Map_*(M^+,X)$, which is natural in open embeddings in the variable $M$.\footnote{See the first five bullet points of section 2.5 of https://etale.site/teaching/s16-lcfa/locally-constant.pdf for the non-equivariant version of this.} \qed
\end{prop}

\medskip 
 Let $M_1 \subset M_2$ be an open inclusion of $G$-manifolds. Extending $f \in
 \Map_c(M_1,X)$ by mapping to
  the base point on $M_2 - M_1$
  gives a $G$-map $\Map_c(M_1,X) \to \Map_c(M_2,X) $.  
The following lemma is a geometric observation.
\begin{lem}
\label{prop:cofibration}
  Let $f \colon M \to \R$ be a $G$-invariant Morse function with isolated critical points, and let $t < s$ be two regular
  values. Denote $M_0  = f^{-1}(-\infty,t)$, $M_1  = f^{-1}(-\infty,s)$, and let $X$ be
  any based $G$-space. Then $\Map_{c}(M_0, X) \to \Map_{c}(M_1,X)$  is a $G$-cofibration.
  \end{lem}
\begin{proof}
  We show that the inclusion  $\Map_{c}(M_0, X) \to \Map_{c}(M_1,X)$  is an $G$-NDR (neighborhood deformation retract) pair, thus a $G$-cofibration. 
The idea is that for a map in $\Map_{c}(M_1,X)$, we can use the gradient flow to
``drag its support towards $M_0$''.

  More precisely, let us assume $t=0$ without loss of generality and that $\epsilon>0$ is small enough such that $(-\epsilon, \epsilon)$ are all regular values for $f$. 
  We prepare several functions to construct the $G$-NDR data $(h,u)$.

  Let $u' \colon \Map_c(M_1,X) \to [0,s] $ be
  $$u'(-) = \sup\{f(\mathrm{supp}(-)),0\}$$
  and
  $u \colon \Map_c(M_1,X) \to [0,1] $ be
  
  \[
    u(-) = \min\{u'(-)/\epsilon,1\}.
  \]
  The map $u$ is equivariant because $f$ is, and it is easy to check that
  
\begin{equation*}
u^{-1}(0) = \Map_c(M_0,X) \text{ and } u^{-1}(t) = \Map_c(f^{-1}(-\infty,\epsilon t),X) \text{ for } t \in (0,1).
\end{equation*}
     
  Since $(-\epsilon,\epsilon)$ are all regular values of the Morse function, we can construct
  an equivariant flow $F \in \Map(I ,\mathrm{Diff}(M_1 , M_1))$ such that 
  \begin{equation*}
    F(0) = \mathrm{id}_{M_1} \text{ and } F(t)(M_0) = f^{-1}(-\infty, t\epsilon).
  \end{equation*}
  Now take $h \colon \Map_c(M_1,X) \times I \to \Map_c(M_1,X)$ to be 
  \begin{equation*}
    h(-,t) = - \circ F(u(-)t).
  \end{equation*}
  It is easy to verify that $(h,u)$ represents $\Map_{c}(M_0, X) \to \Map_{c}(M_1,X)$ as a $G$-NDR pair.
\end{proof}

\begin{prop}(Sequential union)
  \label{lem:NAPD-G-seq-union}
Let $f \colon M \to \R$ be a $G$-invariant Morse function with isolated critical points.  Let
 $s_1< s_2 < \cdots $ be regular values such that for each $s_i
\in \R$, $f^{-1}(-\infty,s_i]$ is compact, and write $M_i =f^{-1}(-\infty,s_i)$.
Then there is a $G$- equivalence
\begin{equation*}
\mathrm{Map}_*(M^+,X) \simeq \mathrm{hocolim}_{i}\mathrm{Map}_*((M_i)^{+},X).
\end{equation*}
\end{prop}
\begin{proof}
As $\overline{M_i} \subset f^{-1}(-\infty,s_i] \subset M_{i+1}$, we have
\begin{equation*}
 \mathrm{Map}_c(M,X) \simeq \mathrm{colim}_{i}\mathrm{Map}_c(M_i,X).
\end{equation*}
Since $\mathrm{Map}_{c}(M_i, X) \to
\mathrm{Map}_{c}(M_{i+1},X)$ is a $G$-cofibration (\cref{prop:cofibration}),
\begin{equation*}
 \mathrm{colim}_{i}\mathrm{Map}_c(M_i,X) \simeq  \mathrm{hocolim}_i \mathrm{Map}_c(M_i,X).
\end{equation*}
Via the functorial identification $\mathrm{Map}_c(M,X) \simeq \mathrm{Map}_*(M^+,X)$, we have
\begin{equation*}
\mathrm{Map}_*(M^+,X) \simeq \mathrm{hocolim}_{i}\mathrm{Map}_*((M_i)^{+},X). \qedhere
\end{equation*}
\end{proof}

Next, we prove the $G$-$\otimes$-excision property.
We begin by fixing a $G$-collar decomposition.
\medskip\\
\begin{minipage}{0.75\textwidth}
  \begin{notation}
  \label{nota:collar}
  In the rest of this section, $X$ is a pointed $G$-space and $M$ is an
  $n$-dimensional $G$-manifold, either closed or the interior of a
  compact manifold with boundary.
  We fix a $G$-collar decomposition $M = M' \bigcup_{M_0 \times \mathbb{R}}
  M''$, where $M'$, $M''$ are open $G$-submanifolds and $M_0$ is a closed
  $G$-submanifold of codimension 1.
 We abuse notation to write $M-M'$ for $M-(M'-M_0 \times [0,+\infty))$ and
 $M-M''$ for $M-(M''-M_0 \times (-\infty,0])$. (See \cref{fig:pic-collar}.)
 They are diffeomorphic as manifolds with boundary, but we gain better control of the boundary and collar gluing.
 Both $M-M'$ and $M-M''$ have $M_0 \times \{0\} \cong M_0$ in their  boundaries.
\end{notation}
\end{minipage}
\begin{minipage}{0.25\textwidth}
   \centering
   \includegraphics[width=\textwidth]{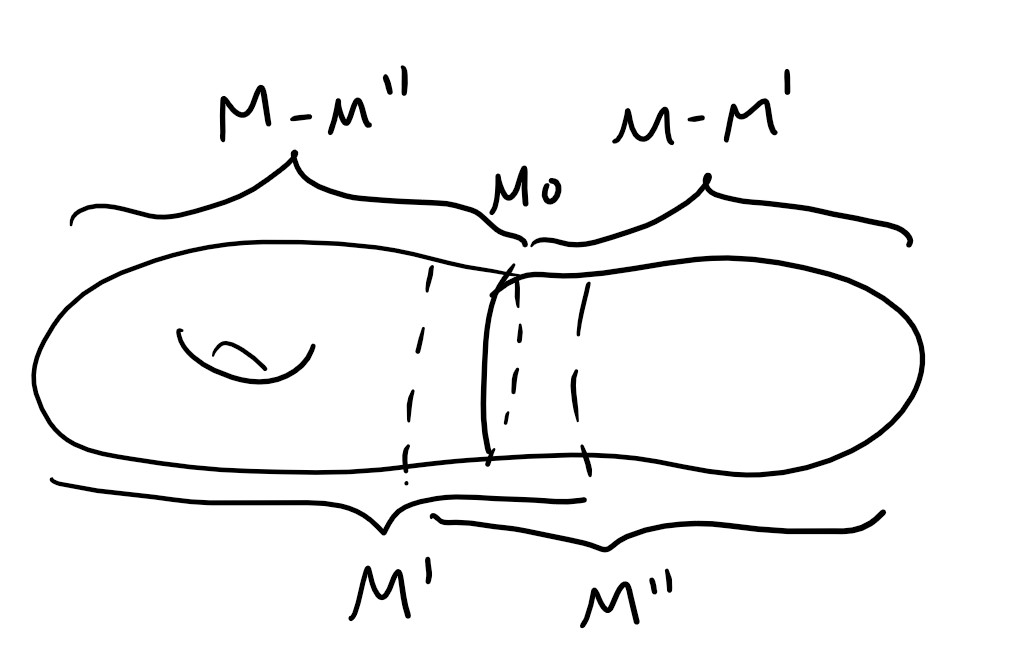} 
\captionof{figure}{Notation}
\label{fig:pic-collar}
\end{minipage}

\begin{lem}\label{prop-G-fibration}
  The restriction map $\mathrm{Map}_*((M-M')^+, X) \to \mathrm{Map}_*((M_0)^{+},X)$ is a $G$-fibration with fiber $\mathrm{Map}_*((M'')^{+},X)$. 
  The corresponding statement in which $M'$ and $M''$ are switched also holds.
\end{lem}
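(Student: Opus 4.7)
The plan is to build a pointed equivariant NDR structure on the pair $(M-M', M_0)$ using the equivariant collar, then invoke the classical Strom/tom Dieck argument that restriction along a pointed closed $G$-cofibration gives a $G$-Hurewicz fibration on mapping spaces, and finally identify the fiber by collapsing $M_0$ to the basepoint. The only geometric input beyond the collar itself is the identification $(M-M') - M_0 \cong M''$, which is immediate from Notation~\ref{nota:collar}.

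Concretely, restricting the given collar $M_0 \times \mathbb{R} \subset M'' \subset M$ to $M_0 \times [0, 1)$ provides an equivariant inward-pointing collar $c \colon M_0 \times [0, 1) \hookrightarrow M-M'$ of the boundary $M_0$. From this I would define $u \colon (M-M')^+ \to I$ by $u(*) = 0$, $u(c(y,s)) = \min(2s, 1)$, and $u(x) = 1$ for $x$ outside the image of $c$, and $h \colon (M-M')^+ \times I \to (M-M')^+$ by $h(c(y,s), t) = c(y, \max(s - t/2, 0))$ on the collar and $h(x,t) = x$ elsewhere. Both maps are continuous and $G$-equivariant because $c$ is, and one checks directly that $u^{-1}(0) = M_0 \cup \{*\} = M_0^+$, that $h(-,0) = \mathrm{id}$, that $h$ fixes $M_0^+$, and that $h(b,1) \in M_0^+$ whenever $u(b) < 1$. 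Thus $(h,u)$ is a pointed $G$-NDR representation of $M_0^+ \hookrightarrow (M-M')^+$, so the inclusion is a pointed closed $G$-cofibration. The standard equivariant argument then uses $u$ and $h$ to build a $G$-equivariant lifting function on pointed mapping spaces, exhibiting $\Map_*((M-M')^+, X) \twoheadrightarrow \Map_*(M_0^+, X)$ as a $G$-Hurewicz fibration.

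For the fiber identification, the fiber over the constant basepoint map is $\Map_*((M-M')^+ / M_0^+, X)$. Since $M_0$ is closed in $M-M'$ the quotient is $((M-M') - M_0)^+$, and by Notation~\ref{nota:collar} one has $(M-M') - M_0 = M'' - M_0 \times (-\infty, 0]$, which is $G$-diffeomorphic to $M''$ by choosing any $G$-equivariant diffeomorphism $(0, \infty) \cong \mathbb{R}$ and applying it to the collar coordinate of $M''$. Hence the fiber identifies naturally with $\Map_*((M'')^+, X)$, as asserted. The statement with $M'$ and $M''$ swapped is proved identically, using the outward collar $M_0 \times (-1, 0]$ pointing into $M'$ instead.

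The main obstacle, such as it is, lies in verifying that the classical cofibration-to-fibration passage on pointed mapping spaces carries over equivariantly in our non-compact setting; but since the NDR data $(h,u)$ is genuinely $G$-equivariant and the argument is entirely pointwise in the target $X$, it goes through $H$-fixed point by $H$-fixed point for every subgroup $H \leq G$, giving the desired $G$-fibration.
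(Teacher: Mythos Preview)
Your collar-based NDR construction is a natural alternative to the paper's proof, which instead invokes the compactness reduction of \cref{Morse_reduction}, embeds $\overline{M}$ in an orthogonal $G$-representation, and deduces that $M_0^+ \to (M-M')^+$ is a $G$-cofibration from the general fact \cite[III.4]{lewis1986equivariant} that inclusions of $G$-ENRs are $G$-cofibrations; the cofibration is then obtained by passing to quotients in a pushout square of compact closures. Your approach avoids the ENR machinery, while the paper's avoids writing down explicit NDR data.

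That said, your explicit $(h,u)$ are not continuous. The homotopy $h$ fails to glue at the edge $s=1$ of the collar: as $s \to 1^-$ one has $h(c(y,s),t) \to c(y,1-t/2)$, yet the point $(y,1)$ lies in $M_0 \times [0,\infty) \subset M-M'$ outside the image of $c$, so your ``identity elsewhere'' clause gives $h((y,1),t) = (y,1)$; these disagree for $t>0$. This is easily repaired by tapering. More seriously, $u$ cannot be continuous at the added basepoint $*$: you set $u(*)=0$, but any sequence in $M-M'$ escaping every compact set while remaining outside the collar has $u \equiv 1$, and such sequences exist whenever $M-M'$ is noncompact. The one-point compactification identifies ``infinity along $M_0$'' (where you want $u=0$) with ``infinity away from $M_0$'' (where your $u=1$), so no continuous $u$ with $u^{-1}(0) = M_0^+$ and $u\equiv1$ off the collar exists. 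This is exactly the difficulty the paper sidesteps by reducing to compact $\overline{M}$ first. Your argument can be repaired by making the same reduction, or by working with $\Map_c(-,X)$ as in \cref{Map_c_vs_Map_plus} and lifting homotopies with controlled support directly; but the NDR data you wrote down does not do the job on $(M-M')^+$.
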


\begin{proof}
  Note that the embedding $M_0 \hookrightarrow M - M'$ is proper, and therefore indeed induces a map $M_0 ^+ \to (M-M')^+$. For the first part, it suffices to show that $M_0^+ \to (M-M')^+$ is a $G$-cofibration in the Hurewicz sense, as mapping out of it would give a Hurewicz fibration, which is in particular a Serre fibration.

It is known that closed embeddings of compact $G$-manifolds are $G$-cofibrations. To
prove it, suppose we have a closed embedding $A \hookrightarrow B$. By
\cite{mostow1957equivariant}, a compact $G$-manifold $B$ can be embedded in some
orthogonal $G$-representation $W$.
Being a closed submanifold, $A$ is equivariantly embedded as a retract of an
open subspace of $W$ \cite[Theorem  1.4]{Illman2000}. Both $A$ and $B$ are then
$G$-ENRs (Euclidean neighborhood retract) in the sense of
\cite{lewis1986equivariant}, and by \cite[III.4]{lewis1986equivariant}, an inclusion of $G$-ENRs is a $G$-cofibration.

If $M$ is compact, so is $M_0$ and $M-M'$, as they are closed submanifolds of
$M$. Then $M_0 \to M-M'$ is a $G$-cofibration and so is  $M_0^+ \to
(M-M')^+$. 

  \begin{minipage}{0.75\textwidth}
    Now suppose that $M$ is the interior of a compact manifold $\overline{M}$
    with boundary $\partial \overline{M}$. (The previous case can also be
    treated as $\partial \overline{M} = \varnothing$.)
    Since both $M_0 $ and $M-M'$ are submanifolds of $M$ which are closed,
    $\partial \overline{M_0}$ and $D=\partial (\overline{M-M'}) \cap
    \partial\overline{M}$ are both submanifolds of $\partial \overline{M}$ which
    are closed. (See \cref{fig:pic1} for illustration.) We can identify 
\begin{align*}
  M_0^+& = \overline{M_0}/\partial\overline{M_0} \\
  (M-M')^+ & = (\overline{M-M'}) /D
\end{align*}
\end{minipage}
\begin{minipage}{0.22\textwidth}
    \centering
   \includegraphics[width=\textwidth]{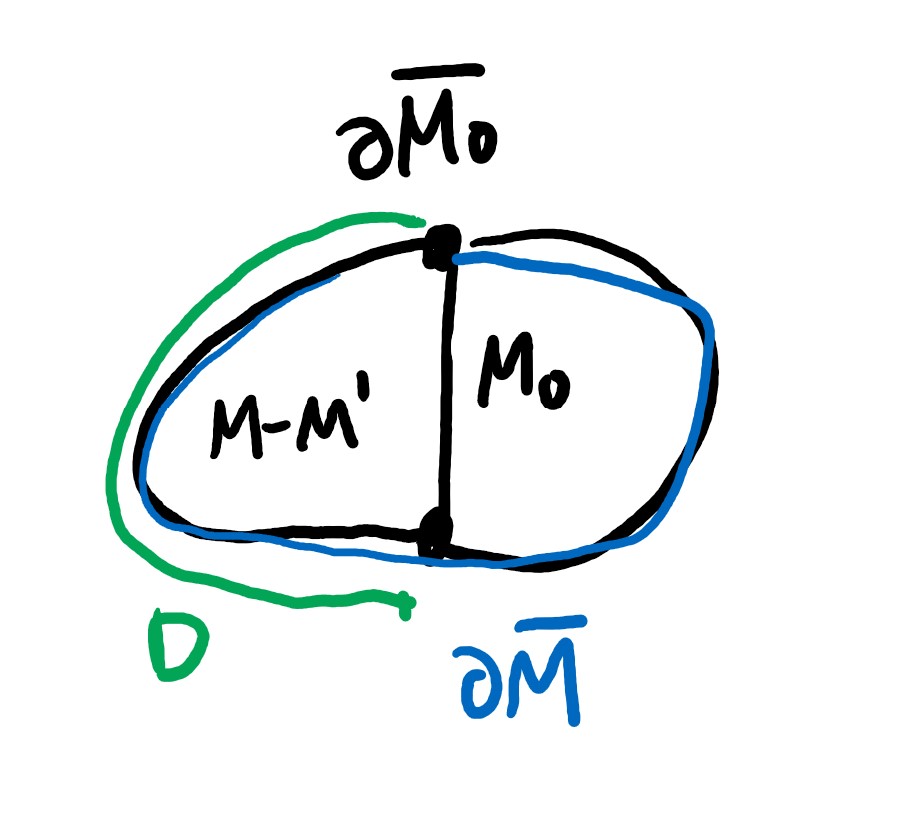} 
\captionof{figure}{Illustration}
\label{fig:pic1}
\end{minipage}

The following is a pushout square all maps of which are closed embedding of compact
$G$-manifolds and thus $G$-cofibrations:
\begin{equation*}
  \begin{tikzcd}
   \partial \overline{M_{0}} \ar[r] \ar[d] & \overline{M_0}
    \ar[d] & \\
    D \ar[r] & \partial (\overline{M-M'}) \ar[r] & \overline{M-M'}
  \end{tikzcd}
\end{equation*}
Therefore, $M_0^+ = \overline{M_0}/\partial \overline{M_0} \cong
\partial(\overline{M-M'})/D  \longrightarrow \overline{M-M'}/D = (M-M')^{+}$ is
a $G$-cofibration.  Furthermore, the cofiber of this map is
$\overline{M-M'}/\partial(\overline{M-M'}) \cong \overline{M''}/\partial
\overline{M''} = (M'')^+$. So the fiber in the claim can be identified with $\mathrm{Map}_{*}((M'')^+, X)$.
\end{proof}

\begin{lem}\label{prop-maps-pb}
 The diagram

$$\xymatrix{
\mathrm{Map}_*(M^{+},X) \ar[r] \ar[d] & \mathrm{Map}_*((M-M')^{+}, X) \ar[d] \\
\mathrm{Map}_*((M-M'')^{+}, X) \ar[r] & \mathrm{Map}_*(M_0^+,X)
}$$

is a homotopy pullback diagram of $G$-spaces.
\end{lem}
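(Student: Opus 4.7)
The plan is to identify the square as a strict pullback of $G$-spaces whose right-hand vertical map is a $G$-fibration, so that the strict pullback automatically computes the homotopy pullback.

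First, I would exhibit $M^+$ as a pushout of pointed $G$-spaces. With the notational conventions of \cref{nota:collar}, $M-M'$ and $M-M''$ are closed $G$-submanifolds of $M$ with common boundary $M_0$; a short check (straightforward from the definitions in \cref{nota:collar}) shows that they cover $M$ with $(M-M')\cap(M-M'')=M_0$. This gives a pushout $M \cong (M-M')\cup_{M_0}(M-M'')$ of locally compact Hausdorff $G$-spaces, which one-point compactification sends to a pushout of pointed $G$-spaces
\[
M^+ \;\cong\; (M-M')^+ \vee_{M_0^+} (M-M'')^+.
\]
Applying $\Map_*(-,X)$, which takes pointed pushouts to strict pullbacks, shows that the square in the statement is a strict pullback of $G$-spaces.

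Second, I would verify that the right-hand vertical map is a $G$-fibration. By standard properties of mapping spaces it suffices to show that the inclusion $M_0 \hookrightarrow M-M'$ is a $G$-cofibration, equivalently that $(M-M', M_0)$ is a $G$-NDR pair. The equivariant collar of $M_0$ in $M-M'$ guaranteed by \cref{nota:collar} furnishes $G$-NDR data by using the collar coordinate in the role of the parameter $f$; the construction of the pair $(h,u)$ is entirely parallel to the one in the proof of \cref{prop:cofibration}, now produced from the equivariant collar flow instead of an equivariant Morse flow. This displays $M_0 \hookrightarrow M-M'$ as a $G$-NDR pair and hence as a $G$-cofibration.

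Combining the two steps, the square is a strict pullback of $G$-spaces whose right-hand vertical map is a $G$-fibration, so it is also a homotopy pullback square. The main point where care is needed is the second step, namely producing \emph{equivariant} NDR data from the collar; this is where the control over the boundary and collar gluing built into \cref{nota:collar} is used, and it is directly analogous to the NDR argument already carried out in \cref{prop:cofibration}.
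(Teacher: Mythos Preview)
Your overall strategy matches the paper's: identify the square as a strict pullback, then show one of the restriction maps is a $G$-fibration. The first step is fine and essentially identical to the paper's argument.

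The second step has a gap. You claim it suffices to show that $M_0 \hookrightarrow M-M'$ is a $G$-cofibration, but the right-hand vertical map is restriction along $M_0^+ \to (M-M')^+$, so what is actually needed is that $M_0^+ \to (M-M')^+$ is a $G$-cofibration of pointed spaces. These are not equivalent: collar-based NDR data $(u,h)$ on $(M-M',M_0)$ need not extend continuously to the one-point compactification when $M_0$ is non-compact. The added basepoint $\infty$ lies in the image of $M_0^+$, so one would need $u^+(\infty)=0$; continuity at $\infty$ then forces $\{u\ge\epsilon\}\subset M-M'$ to be compact for every $\epsilon>0$, which a collar-coordinate $u$ does not provide.

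The paper addresses exactly this point in the separate \cref{prop-G-fibration}. After invoking \cref{Morse_reduction} to assume $M$ is the interior of a compact manifold with boundary, one identifies $M_0^+ = \overline{M_0}/\partial\overline{M_0}$ and $(M-M')^+ = \overline{M-M'}/D$ as quotients of compact $G$-ENRs, and then uses that inclusions of $G$-ENRs are $G$-cofibrations to get the required cofibration on the compactified level. Your analogy with \cref{prop:cofibration} is misleading here: that lemma treats $\Map_c$ along an \emph{open} inclusion coming from Morse sublevels, and the NDR data constructed there is not confronted with the compactification issue.
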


\begin{proof}
  The embeddings $M_0 \hookrightarrow M-M'$ and
  $M - M' \hookrightarrow M$ are proper, therefore the natural maps  
\begin{equation*}
M_0^+ \to (M-M')^+, (M-M')^+ \to M^+ \text{ (and similarly with $M''$)}
\end{equation*}
 are defined and are pointed maps. The maps in the diagram are
restrictions along those. $M_0$ is a closed submanifold, so $M_0^+$ is a closed subspace of
$(M-M')^+$ and of $(M-M'')^+$, and is the intersection of the two.
Therefore $M^+$ is the pushout $(M-M')^+ \cup_{(M_0)^+} (M-M'')^+$. 
This shows that the square is a pullback square. It is also a homotopy pullback since the
restriction maps to $ \mathrm{Map}_*((M_0)^{+},X)$ are $G$-fibrations, as shown
in \cref{prop-G-fibration}.
\end{proof}

\begin{lem}\label{prop-coeq-pullback}
  Suppose $\mapcX{M_0}$ is $G$-connected. Then there is an equivalence between
  the homotopy coherent quotient  
\begin{equation*}
\mapcX{M'} \otimes_{\mapcX{M_0 \times \R}}\mapcX{M''}
\end{equation*}
and the homotopy pullback 
\begin{equation*}
 \mapcX{M-M''} \times_{\mapcX{M_0}} \mapcX{M-M'}.
\end{equation*}
Here, the homotopy coherent action is given by open embeddings via the collar as
  \begin{equation}
    \label{eq:glue}
M' \amalg (M_0 \times \mathbb{R}) \hookrightarrow M', \, 
(M_0 \times \mathbb{R}) \amalg M'' \hookrightarrow M''
\end{equation}
and the homotopy pullback is via the restriction maps in \cref{prop-maps-pb}.
\end{lem}
\begin{proof}
  
We first identify the homotopy pullback with a bar construction. For brevity, we write $B_0$ for $\mapcX{M_0}$, $B''$ for $\mapcX{M-M''}$ and $B'$ for $\mapcX{M-M'}$.
  Then $B_0$ is a based $G$-space with base
  point the constant map to the base point of $X$ and $G$ acts by conjugation.

  Denote by $\times_{B_0}$ the homotopy
  pullback of spaces over ${B_0}$. Consider the restriction
\begin{equation*}
B'= \mapcX{M-M'} \to  \mapcX{M_0} = {B_0}.
\end{equation*}
By \cref{prop-maps-pb} and $M'' \simeq (M-M')\cup_{M_0}(M_0 \times
(-\infty,0])$, there is
\begin{equation*}
  \mapcX{M''} \simeq B' \times_{{B_0}} \mapcX{M_0 \times (-\infty,0]},
\end{equation*}
and similarly,
\begin{equation*}
\mapcX{M'} \simeq B'' \times_{{B_0}} \mapcX{M_0 \times [0,+\infty)}.
\end{equation*}
Writing $B_{I}$ for $\mapcX{M_0 \times I}$ for $I = \R$, $[0, +\infty)$ and
$(-\infty,0]$,
we have a right homotopy action of $B_{\R}$ on $B_{(-\infty,0]}$ and a left
homotopy action of $B_{\R}$ on $B_{[0,+\infty)}$, induce geometrically by
$[0, +\infty) \coprod \R \hookrightarrow [0,+\infty)$ and $\R \coprod (-\infty,
0] \hookrightarrow (-\infty, 0]$. As these geometric maps induces the maps
\eqref{eq:glue}, the homotopy actions of $B_{\R}$ on $\mapcX{M'}$ and
$\mapcX{M''}$  can be identified through these equivalences.

So the homotopy coherent quotient 
is equivalent to the geometric
realization of the simplicial $G$-space
$$ \mathbf{B}(B'' \times_{{B_0}}  B_{[0,+\infty)}, B_\R, B'
\times_{{B_0}} B_{(-\infty,0]}).$$

 Geometric realization commutes with pullbacks of simplicial spaces over a $G$-connected based (\cite[Corollary 11.6]{GILS} or \cite[5.5.6.17]{HA}, noting that the category of $G$-spaces is a topos), so we have $G$-equivalence:
\begin{align*}
  \mathbf{B}(B'' \times_{{B_0}}  B_{[0,+\infty)}, B_\R, B'
\times_{{B_0}} B_{(-\infty,0]})
   &  \simeq B'' \times_{{B_0}} \mathbf{B}( B_{[0,+\infty)}, B_\R, B_{(-\infty,0]})  \times_{{B_0}} B'.
\end{align*}
There is an equivalence $\mathbf{B}( B_{[0,+\infty)}, B_\R, B_{(-\infty,0]})\simeq {B_0}$
as $G$-spaces when ${B_0}$ is $G$-connected.  
Moreover, it is an equivalence as $G$-spaces over ${B_0} \times {B_0}$.
Here, the two augmentation maps of ${B_0}$ are identity on both sides;
the two augmentation maps of $\mathbf{B}( B_{[0,+\infty)}, B_\R, B_{(-\infty,0]})$ over ${B_0}$ are induced
by evaluation at 0 on both sides in the bar construction.
We then have
\begin{align*}
 B'' \times_{{B_0}} \mathbf{B}( B_{[0,+\infty)}, B_\R, B_{(-\infty,0]})
  \times_{{B_0}} B'
  \simeq B'' \times_{{B_0}} {B_0} \times_{{B_0}} B'
  \simeq B'' \times_{{B_0}} B',
\end{align*}
which finishes the proof.
\end{proof}

\begin{lem} \label{cor:G-connected}
  Let $M$ be a smooth $G$-manifold and $N$ be a closed $G$-submanifold.
  Let $X$ be a based $G$-space such that $X^H$ is
  $\mathrm{dim}(M^H)$-connected for all subgroups $H \subset G$.
  Then $\mathrm{Map}_{*}(M/N,X)$ is $G$-connected.
\end{lem}
 Here, since $M$ is a smooth $G$-manifold, $M^H$
 is also a manifold, but possibly empty or with components of different dimensions. We define
  $\mathrm{dim}(\varnothing)=-1$ and $\mathrm{dim}(M^H)$ to be the largest dimension of the components.
\begin{proof}
  Take an equivariant triangulation of $(M,N)$, which exists by \cite[Theorem 3.6]{Illman78}.
  It gives $(M,N)$ a relative $G$-CW structure. Denote $M^{-1}=N$ and
  $S^{-1}=\varnothing$. Then $\mathrm{Map}_{*}(M^{-1}/N,X) = *$ is $G$-connected.
  We induct on the $G$-CW skeleton of $M$. For $k \geq 0$, we have the pushout:
\begin{equation*}
  \begin{tikzcd}
    \coprod_{H_i} G/H_i \times S^{k-1} \ar[d,hook] \ar[r,"f"] & M^{k-1} \ar[d,hook] \\
    \coprod_{H_i} G/H_i \times D^{k} \ar[r] & M^{k} \\
  \end{tikzcd}
\end{equation*}

It gives a cofiber sequence:
\begin{equation*}
M^{k-1}/N \to M^k/N \to \bigvee_{H_i} (G/H_i)_+ \wedge S^k.
\end{equation*}

Mapping into $X$ gives a fiber sequence:
\begin{equation*}
 \prod_{H_i} \mathrm{Map}_{*}((G/H_i)_{+} \wedge S^{k}, X) 
 \to \mathrm{Map}_{*}(M^k/N, X) \to \mathrm{Map}_{*}(M^{k-1}/N,X).
\end{equation*}

For any subgroups $H$ and $H_i$, by the double coset formula, $G/H_i \cong \coprod_{j} H/K_{ij} $ as
$H$-sets, where each of the $K_{ij}$ is the intersection of $H$ with some conjugate of $H_i$. Therefore, 
\begin{align*}
\mathrm{Map}_{*}((G/H_i)_{+} \wedge S^k, X)^{H} & \cong \mathrm{Map}_{*}((\bigvee_j
                                                  (H/K_{ij})_{+}) \wedge
S^{k}, X)^H \\
& \cong \prod_j \mathrm{Map}_{*}(S^{k}, X^{K_{ij}}).
\end{align*}
Since $k \leq dim(M^{H_i}) \leq dim(M^{K_{ij}})$, $X^{K_{ij}}$ is $k$-connected by assumption. So
$\mathrm{Map}_{*}(S^{k}, X^{K_{ij}})$ is connected.
By the long exact sequence of homotopy groups and induction, $\mathrm{Map}_{*}(M^k/N, X)^H$ is connected for all $k$.

This implies that $\mathrm{Map}_{*}(M/N,X)$ is $G$-connected.
\end{proof}

\begin{lem}
  \label{lem:dimM0} Suppose that $M_0 \times \R$ is $V$-framed.
  Then $M_0^H$ is either empty or a manifold of dimension less than $\mathrm{dim}(V^H)$.  \end{lem}
\begin{proof}
 There is an isomorphism of $G$-vector bundles $\mathrm{T}(M_0 \times \R) \cong
 (M_0 \times \R) \times V$. Suppose that $M_0^H \neq \varnothing$ and take $x
 \in M_0^H$. The exponential map $V \cong T_{(x,0)} \to M_0 \times \R$ is a local
 $H$-homeomorphism. Taking $H$-fixed points gives a local chart that establishes
 $(M_0 \times \R)^H \cong M_0^H \times \R$ as a
manifold of dimension $V^H$.
So $\mathrm{dim}(M_0^H) < \mathrm{dim}(V^H)$.
\end{proof}

\begin{prop}($\otimes$-excision)
  \label{lem:NAPD-G-excision}
  Let $M$ be a $V$-framed $G$-manifold with the collar decomposition in \cref{nota:collar}, and let $X$
  be a $G$-space as in \cref{thm:NPD}. Then there is a $G$-equivalence:
   $$\mathrm{Map}_{*}((M')^+,X) \otimes_{\mathrm{Map}_{*}((
     \R \times M_0)^+,X)}
   \mathrm{Map}_{*}((M'')^+,X) \to \mathrm{Map}_{*}(M^+,X).$$
\end{prop}
\begin{proof}
By \cref{lem:dimM0}, $\mathrm{dim}(M_0^H) < \mathrm{dim}(V^H)$. Then by our assumption
on the connectivity of $X$, \cref{cor:G-connected} applied to the pair $(M,N) =
(\overline{M_0},\partial \overline{M_0})$ shows that $\mathrm{Map}_{*}(M_0^+,X)$ is $G$-connected. So we can use
\cref{prop-coeq-pullback} to identify the homotopy coherent quotient with the homotopy pullback
$\mapcX{M-M''} \times_{\mapcX{M_0}} \mapcX{M-M'}$, then use \cref{prop-maps-pb} to identify it with
$\mathrm{Map}_{*}(M^+, X)$.
\end{proof}

\begin{proof}[Proof of \cref{thm:NPD}]
We claim that the functor in \eqref{eq:map-to-X-framed} is a $G$-factorization homology theory of $V$-framed $G$-manifolds. 
By \cref{cor:GFHaxiom},
  it suffices to show that the functor 
  $\mathrm{Map}_*((-)^+,X)$ satisfies $\otimes$-excision,
  which is shown in \cref{lem:NAPD-G-excision},
and respects sequential unions, which is shown in \cref{lem:NAPD-G-seq-union}.
Furthermore, we can identify the coefficients of the $G$-factorization homology theory
$\ul{\mathrm{Map}}_*((-)^{+},X)$ 
with $ \Omega^VX $ by evaluating at $V$.
So we have an equivalence of $G$-functors
\begin{equation*}
\ul{\mathrm{Map}}_*((-)^{+},X) \simeq \int_{-} \Omega^V X.
\end{equation*}
 The conclusion follows from taking the underlying functor over the orbit $G/G$.
\end{proof}

\section{Equivariant Thom spectra} \label{sec:G_Thom_spectra}

In this section, we will define the $G$-Thom spectrum functor, and show that it strongly preserves $G$-colimits and is $G$-symmetric monoidal. This
will allow us to conclude that it respects equivariant factorization homology.

\medskip

We first recall the construction of Thom spectra according to \cite[def. 2.20]{ABGHR}, an approach that leverages the equivalence of spaces and $\infty$-groupoids. 
The Thom spectrum of a stable spherical fibration $E$ over $X$ is defined as the colimit of 
\begin{align} \label{eq:ABGHR_Thom}
  X \xto{E} \operatorname{Pic}(\Sp) \to \Sp.
\end{align}
Here, $\operatorname{Pic}(\Sp)$ is the Picard space of the $\infty$-category of spectra,
that is, the classifying space of local systems of invertible spectra;
$\operatorname{Pic}(\Sp) \to \Sp$ is the inclusion of a sub-$\infty$-groupoid;
the map $X \xto{E} \operatorname{Pic}(\Sp)$ is the classification map of the spherical fibration $E$,
and the colimit of \eqref{eq:ABGHR_Thom} is indexed by $X$, considered as an $\infty$-groupid.

Together these Thom spectra assemble to a colimit preserving functor from spaces over $\operatorname{Pic}(\Sp)$ to spectra:
\[
  \Ss_{/ \operatorname{Pic}(\Sp)} \simeq \operatorname{Psh}( \operatorname{Pic}(\Sp) ) \to \Sp.
\]
By the universal property of the presheaf category, this functor is characterized by its restriction along the Yoneda embedding $ \operatorname{Pic}(\Sp) \to  \operatorname{Psh}(\operatorname{Pic}(\Sp))$, see \cite[cor. 3.13]{ABGHR},
and therefore given as a left Kan extension 
\[
  \begin{tikzcd}
    & \operatorname{Pic}(\Sp) \ar[d, hook] \ar[rd] \\
     \Ss_{/ \operatorname{Pic}(\Sp)} \ar[r,"\simeq"]  & \operatorname{Psh}( \operatorname{Pic}(\Sp) ) \ar[r, dashrightarrow]  & \Sp.
  \end{tikzcd}
\]
We apply a similar approach to construct a $G$-equivariant Thom spectrum.
The goal of this section is the following theorem.
\begin{thm} \label{G_Th_omnibus_thm}
  There exists a $G$-symmetric monoidal functor 
  \[
    \Th \colon \ulTopG_{/\B} \simeq  \Psh(\B) \to \ul\Sp^G 
  \]
  that strongly preserves $G$-colimits.
  Moreover, let  $E\in \Sp^G$ be an invertible genuine $G$-spectrum and  $e \colon X \to \B$ be a
  $G$-map  from a $G$-space $X$ such that $e$ is $G$-homotopic to the constant map with value $E$.
  Then the functor $\Th$ takes $e\in \left(\ulTopG_{/\B}\right)_{[G/G]}$
  to the genuine $G$-spectrum $E\otimes \Sigma^{\infty}_+X\in \Sp^G$. 
\end{thm}
Let us briefly explain the notation used in \cref{G_Th_omnibus_thm}.
Since $\Sp$ is the fiber of the $G$-$\infty$-category $\ul\Sp^G$ over $G/e$
\footnote{One should think of $\ul\Sp^G$ as defining a nontrivial $G$-action on $\Sp$.}, 
we can endow $\operatorname{Pic}(\Sp)$ with a $G$-action whose $H$-fixed points
are equivalent to the Picard space of genuine $H$-spectra.

We call the resulting $G$-space the \myemph{Picard $G$-space of $\ul\Sp^G$}, and
think of it as an object in the category of $G$-spaces, $\B \in \ulTopG$ (see
\cref{G_Picard} below for details).

Finally, the $G$-$\infty$-category $\ulTopG_{/\B}$ is the parametrized slice category of $\ulTopG$:
its fiber $\left( \ulTopG_{/\B} \right)_{[G/H]}$ is equivalent to the slice
$\infty$-category $\mathbf{Top}^H_{/\B}$ of $H$-spaces over $\B$.
In particular, the fiber $\left(\ulTopG_{/\B}\right)_{[G/G]}$ is equivalent to the category of $G$-spaces over $\B$, and its objects are given by maps of $G$-spaces $e \colon X\to \B$.

\begin{rem} \label{rem:p-local_Thom}
  For \cref{G_Th_omnibus_thm} and its applications,
  one could also work with $p$-local genuine $G$-spectra. 
  Let $ \ul\Sp^G_{(p)} \subset \ul\Sp^G$ be the $G$-subcategory of (fiberwise) $p$-local spectra.
  Note that the $G$-symmetric monoidal structure of $\ul\Sp^G$ induces a $G$-symmetric monoidal structure on $\ul\Sp^G_{(p)} $, 
  as the essential image of the $G$-localization
  \[
    - \otimes \mathbb{S}_{(p)} \colon \ul\Sp^G \to \ul\Sp^G ,
  \] 
  see \cite[thm. 2.9.2 and rem. 2.9.3]{Parametrized_algebra}.
  Replacing $\ul\Sp^G$ with $ \ul\Sp^G_{(p)}$, we obtain a $p$-local Thom spectrum functor
  \[
    \Th \colon \ulTopG_{/\Pic(\ul\Sp^G_{(p)})} \simeq  \Psh(\Pic(\ul\Sp^G_{(p)})) \xto{\Th'} \ul\Sp^G_{(p)}. 
  \]
  The entire section as well as \cref{sec:Thom_GFH} and \cref{sec:v-fold-loop} hold mutatis mutandis.
  In particular, the $p$-local $G$-Thom spectrum of a $G$-map $X \to \Pic(\mathbb{S}_{(p)})$ is given by 
  \[
    \Th \left( X \to \Pic(\mathbb{S}_{(p)}) \to \Pic(\ul\Sp^G_{(p)}) \right) ,
  \]
  where the second map is the inclusion $\Pic(\mathbb{S}_{(p)}) \subset \Pic(\ul\Sp^G_{(p)})$.
\end{rem}

\subsection{The Picard $G$-space} \label{G_Picard}
In this subsection we construct the Picard $G$-space $\Pic(\ul\Sp^G)$.
In order to define a $G$-symmetric monoidal structure on the parametrized slice category $\ul\Top^G_{/\Pic(\ul\Sp^G)}$ we will need to endow $\Pic(\ul\Sp^G)$ with a $G$-symmetric monoidal structure, encoding the fact that the Hill-Hopkins-Ravenel norm $N_H^G$ sends invertible $H$-spectra to invertible $G$-spectra (since $N_H^G$ is symmetric monoidal).
More generally, we will define the Picard $G$-space of a $G$-symmetric monoidal $G$-$\infty$-category and show that it inherits a $G$-symmetric monoidal structure. 

Let $p \colon\ul\C^\otimes \fib \GFin_*$ be a $G$-symmetric monoidal
$G$-$\infty$-category and let $\ul\C \fib \OGop$ be its underlying
$G$-$\infty$-category. (See \cref{sec:prelim} for definitions.)
Each fiber $\ul\C_{[G/H]}$ of the underlying $G$-$\infty$-category is endowed with a symmetric
monoidal structure, defined by the pull back of $p$ along 
\[
  \Fin_* \to \GFin_* ,\quad I \mapsto (I \times G/H \to G/H).
\]
\begin{mydef}
  An object $x$ in the $G$-$\infty$-category $\ul\C$ which is over $G/H$ is \myemph{invertible} if $x$
  is an invertible object in the $\infty$-category $\ul\C_{[G/H]}$, that is, the object $x\in
  \ul\C_{[G/H]}$ is dualizable  and the evaluation map $ x \otimes x^\wedge \to \mathbbm{1}$ is an
  equivalence.
  The Picard $G$-space $\Pic(\ul\C) \to \OGop$ is the maximal $G$-$\infty$-groupoid of $\ul\C$ spanned by invertible objects, i.e. $\Pic(\ul\C) \subseteq \ul\C$ is the subcategory spanned by invertible objects and coCartesian morphisms.
\end{mydef}

\begin{rem}
  \label{rem:ElmendorfPic}
  By construction, $\Pic(\ul\C)$ is a $G$-$\infty$-groupoid, that is,
  the map $\Pic(\ul\C) \to \OGop$ is a left fibration.
  Since the $\infty$-category of left fibrations over $\OGop$ is equivalent to the $\infty$-category of $G$-spaces, 
 there is a $G$-space corresponding to the $G$-$\infty$-groupoid $\Pic(\ul\C)$.
  By abuse of notation, we write $\Pic(\ul\C) \in \GTop$ for this $G$-space, with $H$-fixed points given by $\Pic(\ul\C)_{[G/H]} \simeq \operatorname{Pic}(\ul\C_{[G/H]})$.
\end{rem}
 
\paragraph{The $G$-symmetric monoidal structure of the Picard $G$-space.}
In \cref{GSM_str_on_max_G_subgpd}, it is shown that the left fibration 
\[ 
  \ul\C^\otimes_{coCart} \fib \GFin_*
\]
endows the maximal $G$-$\infty$-groupoid $\ul\C^\simeq \subseteq \ul\C$ with a $G$-symmetric monoidal structure.
This $G$-symmetric monoidal structure further restricts to a $G$-symmetric monoidal structure
$$\Pic(\ul\C)^\otimes \fib \GFin_*$$ on $\Pic(\ul\C) \subseteq \ul\C^\simeq$, as we now explain.

Let $I=(U \to G/H) \in \GFin_*$ be an object. As shown in \cref{GSM_str_on_max_G_subgpd},
the $G$-Segal map for $\ul\C^\otimes_{coCart}$ gives an equivalence
\( (\ul\C^\otimes_{coCart})_{[I]} \iso \prod_{W\in \orb(U)} \ul\C^\simeq_{[W]} \).
\begin{construction}
  Let $\Pic(\ul\C)^\otimes \subseteq \ul\C^\otimes_{coCart}$ be the full subcategory whose fiber 
  over the object $I=(U \to G/H) \in \GFin_*$,
  \[
     \left( \Pic(\ul\C)^\otimes \right)_{[I]} \subseteq \left( \ul\C^\otimes_{coCart} \right)_{[I]} \simeq \prod_{W \in \orb{U}} \ul\C^\simeq_{[W]},
  \]
  is spanned by tuples of invertible objects 
  \[
    (x_{[W]}) \in \prod_{W \in \orb{U}}  \operatorname{Pic}(\ul\C_{[W]}).
  \]
  Here, $ \operatorname{Pic}(\ul\C_{[W]}) \subseteq (\ul\C_{[W]})^\simeq = \ul\C^\simeq_{[W]}$ is the Picard space of the fiber $\ul\C_{[W]}$.
\end{construction}
\begin{lem}
  The restriction of \( \ul\C^\otimes_{coCart} \fib \GFin_* \) to \( \Pic(\ul\C)^\otimes \subseteq \ul\C^\otimes_{coCart} \) defines a $G$-symmetric monoidal structure on $\Pic(\ul \C)$.
\end{lem}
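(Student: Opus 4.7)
The plan is to verify the two defining conditions for $\Pic(\ul\C)^\otimes \to \GFin_*$ to be a $G$-symmetric monoidal $G$-$\infty$-category: that it is a coCartesian fibration (in fact a left fibration, since it sits inside the left fibration $\ul\C^\otimes_{coCart} \to \GFin_*$), and that its $G$-Segal maps are equivalences onto the predicted products of Picard spaces.

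The $G$-Segal condition is immediate from the construction: we defined the fiber $\left( \Pic(\ul\C)^\otimes \right)_{[I]}$ over $I=(U\to G/H)$ to be precisely the sub-$G$-$\infty$-groupoid $\prod_{W \in \orb(U)} \Pic(\ul\C_{[W]})$ of $\left( \ul\C^\otimes_{coCart} \right)_{[I]} \simeq \prod_{W \in \orb(U)} \ul\C^\simeq_{[W]}$. Consequently the $G$-Segal maps for $\Pic(\ul\C)^\otimes$ are just the restrictions of those for $\ul\C^\otimes_{coCart}$ to the sub-Picard-groupoids of invertible objects, and remain equivalences by \cref{GSM_str_on_max_G_subgpd}.

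The main step is then to verify that $\Pic(\ul\C)^\otimes \subseteq \ul\C^\otimes_{coCart}$ is closed under coCartesian transport; since the ambient projection is a left fibration, this amounts to requiring that every edge whose source lies in $\Pic(\ul\C)^\otimes$ has target in $\Pic(\ul\C)^\otimes$ as well. Using the Segal decomposition, I would reduce this to showing that, for any morphism $f \colon I \to J$ in $\GFin_*$, the induced functor $\prod_{W \in \orb(U)} \ul\C^\simeq_{[W]} \to \prod_{W' \in \orb(U')} \ul\C^\simeq_{[W']}$ preserves tuples of invertible objects. Target-orbit-wise, this functor is assembled from (i) restriction functors $\varphi^* \colon \ul\C_{[G/K]} \to \ul\C_{[G/H]}$ attached to maps $\varphi \colon G/H \to G/K$ in $\OG$, and (ii) tensor products (possibly indexed over finite sets, i.e.\ norms). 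Restrictions in (i) are symmetric monoidal by the definition of a $G$-symmetric monoidal structure on $\ul\C$, and symmetric monoidal functors send invertible objects to invertible objects; tensor products and indexed tensor products in (ii) preserve invertibility because a tensor product of invertibles is invertible in any symmetric monoidal $\infty$-category.

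The main obstacle I anticipate is not conceptual but combinatorial: unwinding a general morphism of $\GFin_*$ into its restriction and fold pieces, so that the argument above can be applied piece-by-piece, in particular handling the norm functors $N^K_H$ arising from folds across orbits. I expect no essential difficulty since each norm factors as a composition of the restrictions and tensor products already dealt with. Once this is done, pulling $\Pic(\ul\C)^\otimes$ back along the embedding $\OGop \hookrightarrow \GFin_*$, $G/H \mapsto (G/H \xrightarrow{=} G/H)$, identifies the fiber over $G/H$ with $\Pic(\ul\C_{[G/H]})$, confirming that the $G$-symmetric monoidal structure so constructed does live on $\Pic(\ul\C)$.
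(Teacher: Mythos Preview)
Your approach is essentially the same as the paper's: both verify that $\Pic(\ul\C)^\otimes \subseteq \ul\C^\otimes_{coCart}$ is stable under coCartesian transport (hence a left fibration over $\GFin_*$) and then observe that the $G$-Segal conditions restrict from $\ul\C^\otimes_{coCart}$. One small correction: your closing remark that ``each norm factors as a composition of the restrictions and tensor products already dealt with'' is not true in general---norms are genuinely new operations---but your argument does not actually need this; the clean reason norms preserve invertibility (and the one the paper uses) is that each norm functor $\otimes_\varphi \colon \ul\C_{[G/K]} \to \ul\C_{[G/H]}$ is itself symmetric monoidal, hence sends invertibles to invertibles.
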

\begin{proof}
  Let $I=(U \to G/H) \in \GFin_*$ be an object.
  By construction, we have
  $$(\Pic(\ul\C)^\otimes)_{[I]} \simeq  \prod_{W \in \orb{U}} \operatorname{Pic}(\ul\C_{[W]}).$$
  
  We show that the restriction
  \[
    p \colon \Pic(\ul\C)^\otimes  \subseteq \ul\C^\otimes_{coCart} \overset{p'}{\fib} \GFin_*
  \]
  is a left fibration. In the language of \cite[defn. 4.4]{Expose1},
  this amounts to proving that $\Pic(\ul\C)^\otimes$ is a
  $\GFin_*$-subcategory. 
  By \cite[lem. 4.5]{Expose1},
  it suffices to check the following: a $p'$-coCartesian edge $x \to y$ lies in $
  \Pic(\ul\C)^\otimes$ just in case $x$ does.
  These are true since in each fiber $\Pic(\ul\C)_{[G/H]} \subset \ul\C^\simeq_{[G/H]}$,
  invertible objects are closed under tensor products,
  and for every $\varphi \colon G/K \to G/H$, the norm functor
  $\otimes_\varphi \colon \ul\C_{[G/K]} \to \ul\C_{[G/H]}$ 
  is symmetric monoidal, and in particular preserves invertible objects.
  
  The $G$-Segal conditions for $\ul\C^\otimes_{coCart}$ restrict to give the $G$-Segal conditions for $\Pic(\ul\C)^\otimes$. 
\end{proof}

  By \cref{G_Picard_as_G_comm_alg}, we can think of $\Pic(\ul\C)$  as a $G$-commutative algebra in $(\ulTopG)^\times$.

\subsection{The Thom spectrum $G$-functor}

We are now ready to construct the $G$-functor of \cref{G_Th_omnibus_thm}.
\begin{mydef} \label{Th_construction}
  Let $\Th' \colon \Psh(\B) \to \ul\Sp^G$ be the $G$-left Kan extension of the inclusion $\B \to \ul\Sp^G$ along the parametrized Yoneda embedding $ j \colon \B \to \Psh(\B)$.
  Define 
  \[
    \Th \colon \ulTopG_{/\B} \simeq  \Psh(\B) \xto{\Th'} \ul\Sp^G 
  \]
  to be $\Th'$ precomposed with an inverse of the natural equivalence of \cref{parametrized_st_unst_over_gpd}.
\end{mydef}

\begin{prop} \label{Th_pres_colim}
  A $G$-functor $\ulTopG_{/\B}  \to \ul\Sp^G$ is equivalent to $\Th$ if and only
  if it strongly preserves $G$-colimits and its restriction along the parametrized Yoneda embedding 
  \[ 
   \B \overset{j}{\cof} \Psh(\B) \simeq \ulTopG_{/\B}
  \] 
  is equivalent to the inclusion $\B \to \ul\Sp^G$.
\end{prop}
\begin{proof}
  This is an application of \cite[thm. 11.5]{Expose2}.
\end{proof}
\begin{cor}
  The restriction of the $G$-functor $\Th \colon \ulTopG_{/\B} \to \ul\Sp^G$ to the fiber over $G/e\in \OGop$ is equivalent to the Thom spectrum functor 
  \[
    \Ss_{/ \operatorname{Pic}(\Sp)} \simeq \operatorname{Psh}( \operatorname{Pic}(\Sp) ) \to \Sp 
  \]
 of \cite{ABGHR}.
\end{cor}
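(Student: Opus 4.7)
The plan is to use the universal characterizations of both the $G$-Thom functor (just established as \cref{Th_pres_colim}) and the classical ABGHR Thom functor (given by \cite[cor.~3.13]{ABGHR}), and to show that the fiber over $G/e$ of the first satisfies the defining property of the second.

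First I would identify the relevant fibers. The fiber of $\ulTopG$ over $G/e \in \OGop$ is the $\infty$-category of (non-equivariant) spaces $\Ss$, and the fiber of $\ul\Sp^G$ over $G/e$ is the $\infty$-category of spectra $\Sp$; correspondingly the fiber of $\B = \Pic(\ul\Sp^G)$ over $G/e$ is $\operatorname{Pic}(\Sp)$. Since parametrized slice categories restrict to ordinary slice categories fiberwise, the fiber of $\ulTopG_{/\B}$ over $G/e$ is $\Ss_{/\operatorname{Pic}(\Sp)} \simeq \operatorname{Psh}(\operatorname{Pic}(\Sp))$. Thus the restriction of $\Th$ to the fiber over $G/e$ is a functor $F \colon \operatorname{Psh}(\operatorname{Pic}(\Sp)) \to \Sp$ which I want to compare to the ABGHR Thom functor.

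Next I would verify that $F$ satisfies the two hypotheses of \cite[cor.~3.13]{ABGHR}. For colimit preservation: by \cref{Th_pres_colim}, $\Th$ strongly preserves $G$-colimits, and fiberwise this specializes to preservation of ordinary colimits in each fiber of $\OGop$, in particular over $G/e$. For the restriction along the Yoneda embedding: the parametrized Yoneda embedding $\iota \colon \B \to \Psh(\B)$ restricts on the fiber over $G/e$ to the ordinary Yoneda embedding $\operatorname{Pic}(\Sp) \to \operatorname{Psh}(\operatorname{Pic}(\Sp))$, and by construction the composite $\iota \circ \Th$ is the inclusion $\B \to \ul\Sp^G$, whose fiber over $G/e$ is the inclusion $\operatorname{Pic}(\Sp) \to \Sp$. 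Hence the restriction of $F$ along Yoneda is the inclusion of invertible spectra.

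Since the ABGHR Thom functor is characterized up to equivalence by exactly these two properties, it follows that $F$ is equivalent to the ABGHR Thom spectrum functor, which is the desired conclusion.

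The main (albeit minor) obstacle is the bookkeeping of fibers: one needs the statements that parametrized slice categories, the parametrized Yoneda embedding, and $G$-colimit-preservation all restrict correctly to the fiber over $G/e$. The first two are essentially formal from the definitions; the last follows from the definition of strong preservation of $G$-colimits, which in particular requires preservation of colimits fiberwise.
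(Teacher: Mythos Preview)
Your proposal is correct and follows essentially the same approach as the paper: identify the fibers over $G/e$, then verify that the restricted functor satisfies the universal property of \cite[cor.~3.13]{ABGHR} (colimit preservation plus agreement with the inclusion $\operatorname{Pic}(\Sp)\to\Sp$ along the Yoneda embedding). One small slip: you wrote ``the composite $\iota \circ \Th$'' where you meant $\Th \circ \iota$ (i.e., the restriction of $\Th$ along the Yoneda embedding), but this is clearly just a typo and does not affect the argument.
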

\begin{proof}
  It suffices to show the restriction of the Thom spectrum $G$-functor $\Th$ satisfies the universal
  property of the Thom spectrum functor as in \cite[cor. 3.13]{ABGHR}.

  We can identify the fibers over the orbit $G/e$ of the corresponding $G$-$\infty$-categories as
  \begin{align*}
    \left( \ulTopG_{/\B} \right)_{[G/e]} & \simeq \left( \ulTopG_{[G/e]} \right)_{/\B_{[G/e]}}  \simeq \Ss_{/ \operatorname{Pic}(\Sp)} , \quad
    \left( \ul\Sp^G \right)_{[G/e]} \simeq \Sp.
  \end{align*}
  By construction, the functor 
  \[
    \Ss_{/ \operatorname{Pic}(\Sp)} \simeq \left( \ulTopG_{/\B} \right)_{[G/e]} \xto{\Th_{[G/e]}} \left( \ul\Sp^G \right)_{[G/e]} \simeq \Sp
  \]
  preserves colimits and its restriction along 
  \[
    \operatorname{Pic}(\Sp) \simeq \B_{[G/e]} \to \Psh(\B)_{[G/e]} \simeq \Ss_{ /\operatorname{Pic}(\Sp)}
  \]
  is equivalent to the canonical inclusion \( \operatorname{Pic}(\Sp) \simeq \Pic(\ul\Sp^G)_{[G/e]} \to (\ul\Sp^G)_{[G/e]} \simeq \Sp \).
\end{proof}

\medskip

The Thom spectrum of a stably trivial sphere bundle over $X$ is given by a smash product $ S^n \otimes \Sigma^\infty_+ X$ with an invertible spectrum $S^n \in \Sp$.
Our next goal is an equivariant version of this fact. 

We will use the following notation when discussing $G$-Thom spectra. 
\begin{notation}
  Consider $\B$ as a $G$-space by \cref{rem:ElmendorfPic}. 
  An invertible $H$-spectrum $E\in \left(\B \right)_{[G/H]} \simeq \operatorname{Pic}(\Sp^H)$ is
  then an $H$-fixed point of the $G$-space $\B$, which corresponds to an $H$-equivariant map $ \ast \to \B$.
  We denote by
  \(
    f_E \colon \ast \to \B 
  \)
  the $H$-map corresponding to $E$.
\end{notation}

\begin{prop} \label{Th_of_representables}
  The $G$-functor $\Th$ sends the map of $H$-spaces $f_E \colon\ast \to \B$ to
  its corresponding invertible $H$-spectrum $E \in \B_{[G/H]} {\subset \ul\Sp^G_{[G/H]}}$.
\end{prop}
\begin{proof}
  The parametrized Yoneda embedding $ j \colon \B \to \Psh(\B)$ is fully faithful (\cite[thm. 10.4]{Expose1}).
  By \cite[prop. 10.5]{Expose2} the composition $\Th' \circ j$ is equivalent to the inclusion $\B \to \ul\Sp^G$, so $\Th'(j(E)) = E $.
  We will finish the proof by showing that $f_E \colon\ast \to \B$ corresponds to $j(E) \in
  \ulTopG_{/\B}$ under the equivalence
  $$\ul{\operatorname{Psh}}_G(\B) \simeq \ulTopG_{/\B}$$ in \cref{parametrized_st_unst_over_gpd}.

  The equivalence of \cref{parametrized_st_unst_over_gpd} sends the representable presheaf 
  \[
    j(E) \in \Psh(\B)_{[G/H]}
  \]
  to a $\ul{G/H}$-functor of $\ul{G/H}$-$\infty$-groupoids 
  \[
    \left( \B_{/\ul{E}} \fib \B \ultimes \ul{G/H} \right) \in
    \left( \Top_{\ul{G/H}} \right)_{/ (\B \ultimes \ul{G/H}) } 
    \simeq 
    \left( \ulTopG_{/\B} \right)_{[G/H]}.
  \]
  Note that the fibers of $(\B_{/\ul{E}})$ are all contractible as slices of $\infty$-groupoids,
  so the natural $\ul{G/H}$-functor 
  \(
    \sigma_{E} \colon \ul{G/H} \to (\B)_{/\ul{E}}
  \)
  is a $\ul{G/H}$-equivalence.
  It follows that the $\ul{G/H}$-functor $\B_{/\ul{E}} \fib \B \ultimes \ul{G/H}$ is equivalent to the composition
  \[
    \ul{G/H} \xto{\sigma_{E}} (\B)_{/\ul{E}} \fib \B \ultimes \ul{G/H}.
  \]
  This composition is precisely the $\ul{G/H}$-object $E \colon \ul{G/H} \to \B \ultimes \ul{G/H}$ associated to $E$, which corresponds to the $H$-map $f_E \colon \ast \to \B$ under the isomorphism 
  \[
    \Fun_{\ul{G/H}}(\ul{G/H}, \B \ultimes \ul{G/H}) \= \Fun_G(\ul{G/H}, \B ) \= \Map_H(\ast, \B). \qedhere
  \] 
\end{proof}

Using \cref{Th_of_representables} we can calculate the equivariant Thom spectrum of $G$-nullhomotopic maps.
\begin{prop} \label{prop:G_Thom_of_nullhomotpic_map}
  Let $X\in \GTop \simeq \ulTopG_{[G/G]}$ be a $G$-space and $E \in \operatorname{Pic}(\Sp^G)\simeq
  \B_{[G/G]}$ be an invertible $G$-spectrum, then 
  \[
    \Th( X \to \ast \xto{f_E} \B) \simeq E \otimes \Sigma^\infty_+ X.
  \]
\end{prop}
\begin{proof}
  The point $\ast \in \GTop \simeq \ulTopG_{[G/G]}$ is the terminal $G$-space.
  Express the $G$-space $X$ as $\Gcolim_X(\ast)$, the $G$-colimit of the constant $G$-diagram 
  \[
    X \to \ul{G/G} \to \ulTopG,
  \]
  where the second functor corresponds to the terminal $G$-space. 
  Postcomposition with $f_E$ induces a $G$-functor  
\begin{equation*}
  \begin{array}{cccccc}
    (f_E)_* \colon & \ulTopG & \simeq & \ulTopG_{/\ul{\ast}} & \to & \ulTopG_{/\B} , \\
     & X & \mapsto & (X\to \ast) & \mapsto & ( X \to \ast \xto{f_E} \B ) ,
  \end{array}
\end{equation*}
  and this $G$-functor strongly preserves $G$-colimits.
  Since $(f_E)_*(\ast) = \left( f_E \colon \ast \to \B \right)$ we have  
  \[
    (f_E)_*(X) =  (f_E)_* \left( \Gcolim_X(\ast) \right) \simeq \Gcolim_X \left( f_E \right) \in \left(\ulTopG_{/\B} \right)_{[G/G]}.
  \]
  We can now apply the $G$-functor $\Th$ of \cref{Th_construction} to $(f_E)_*(X) $.
  By \cref{Th_pres_colim} and \cref{Th_of_representables} we have 
  \[
    \Th \left( (f_E)_*(X) \right) \simeq \Th \left( \Gcolim_X \left( f_E \right) \right) \simeq  \Gcolim_X \left( \Th \left(f_E \right) \right) \simeq \Gcolim_X (E).
  \]

  On the other hand, 
  \[
    E \otimes \Sigma^\infty_+ X \simeq  E \otimes \Sigma^\infty_+ \left( \Gcolim_X (\ast) \right) \simeq \Gcolim_X \left( E \otimes \Sigma^\infty_+ \ast \right) \simeq \Gcolim_X (E).
  \]
  Together we have  
  \[
    \Th \left( X \to \ast \xto{f_E} \B \right) \simeq E \otimes \Sigma^\infty_+ X , 
  \]
  as claimed.
\end{proof}

\medskip

We end this section by extending $\Th$ to a $G$-symmetric monoidal functor.

We first describe the $G$-symmetric monoidal structure on $(\ulTopG)_{/\B}$.
We have seen in \cref{G_Picard} that 
the $G$-symmetric monoidal structure of $\ul\Sp^G$ induces a $G$-symmetric monoidal structure on the $G$-$\infty$-groupoid $\B$,
and that we can consider $\Pic(\ul\Sp^G)$ as a $G$-commutative algebra in $(\ulTopG)^\times$. 
Therefore, we can endow the parametrized slice category $(\ulTopG)_{/\B}$ with the parametzied slice $G$-symmetric monoidal structure of \cref{G_slice_GSM_str}.
\begin{prop} \label{thm:G_Thom_as_GSM_functor}
  The $G$-functor $\Th$ of \cref{Th_construction} extends to a $G$-symmetric monoidal functor
  \[
    \Th^\otimes \colon \left( \ulTopG_{/\B} \right)^\otimes \to (\ul\Sp^G)^\otimes ,
  \]
  where $\left( \ulTopG_{/\B} \right)^\otimes$ is the slice $G$-symmetric monoidal structure of \cref{G_slice_GSM_str}.
\end{prop}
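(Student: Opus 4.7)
The plan is to obtain the $G$-symmetric monoidal structure on $\Th$ by realizing it as a $G$-symmetric monoidal left Kan extension from the Picard $G$-space, using a parametrized universal property of Day convolution. I would proceed in three main steps.

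First, I would observe that the inclusion $\B \hookrightarrow \ul\Sp^G$ is already $G$-symmetric monoidal. This is essentially built into the construction of $\Pic(\ul\C)^\otimes$ in Section 3.1: the $G$-symmetric monoidal structure on $\B$ was defined precisely as the restriction of the coCartesian edges in $(\ul\Sp^G)^\otimes$ to the full $G$-subcategory spanned by invertible objects, which is closed under both tensor products in each fiber and under the norm functors $\otimes_\varphi$ (since these are symmetric monoidal and therefore send invertible objects to invertible objects).

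Second, I would identify the slice $G$-symmetric monoidal structure on $\ulTopG_{/\B}$ from \cref{G_slice_GSM_str} with the $G$-Day convolution structure on $\Psh(\B)$ under the equivalence $\ulTopG_{/\B}\simeq \Psh(\B)$ of \cref{parametrized_st_unst_over_gpd}. In the non-parametrized setting, this is the standard fact that for a commutative monoid $B$ in $\infty$-groupoids, the slice symmetric monoidal structure on $\Ss_{/B}$ (induced by viewing $B$ as a commutative algebra in $\Ss^\times$) agrees under the straightening equivalence with the Day convolution symmetric monoidal structure on $\Psh(B)$. The $G$-parametrized version should follow fiberwise, together with the compatibility of both constructions with the norm functors encoded by the $\OGop$-indexed structure; I would verify this by comparing the $G$-Segal objects on both sides over $\GFin_*$, since both structures describe the same $G$-commutative algebra structure transported across the straightening/unstraightening equivalence.

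Third, I would invoke the universal property of $G$-Day convolution: any $G$-symmetric monoidal functor $\B^\otimes \to \ul\D^\otimes$ into a $G$-symmetric monoidal $G$-$\infty$-category $\ul\D$ which admits all $G$-colimits and whose tensor product distributes over $G$-colimits extends uniquely, up to equivalence, to a $G$-colimit preserving $G$-symmetric monoidal functor $\Psh(\B)^\otimes \to \ul\D^\otimes$. Since $\ul\Sp^G$ is $G$-presentably $G$-symmetric monoidal, this applies to the $G$-symmetric monoidal inclusion from the first step, producing a $G$-colimit preserving $G$-symmetric monoidal functor
\[
  \Psh(\B)^\otimes \longrightarrow (\ul\Sp^G)^\otimes
\]
whose underlying $G$-functor restricts along the parametrized Yoneda embedding to the inclusion $\B\to \ul\Sp^G$. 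By \cref{Th_pres_colim}, this underlying $G$-functor is equivalent to $\Th'$, and hence to $\Th$ under the equivalence of \cref{parametrized_st_unst_over_gpd}.

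The main obstacle is the second step, i.e.\ the comparison between the $G$-slice structure and the $G$-Day convolution structure, since the parametrized universal property of Day convolution is needed in a form which may require either a direct construction of a $G$-symmetric monoidal equivalence $\left(\ulTopG_{/\B}\right)^\otimes \simeq \Psh(\B)^\otimes_{\mathrm{Day}}$ or an appeal to results of Nardin--Shah on parametrized presentable $G$-$\infty$-categories. Everything else is formal once this identification is in place.
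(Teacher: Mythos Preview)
Your proposal is correct and follows essentially the same approach as the paper: the paper invokes \cref{prop:GSM_G_LKE} (the $G$-operadic left Kan extension along the parametrized Yoneda embedding is $G$-symmetric monoidal, which is your Step~3 in slightly different packaging) together with \cref{GSM_straightening_unstraightening} (the $G$-symmetric monoidal comparison between the slice and Day convolution structures, your Step~2). Your identification of Step~2 as the main obstacle is apt, since the paper also defers this to a stated but unproven theorem attributed to the Nardin--Shah framework.
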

\begin{proof}
  By \cref{prop:GSM_G_LKE}, the $G$-left Kan extension $\Th'$ extends to a $G$-symmetric monoidal functor.
  The result follows from the fact that the equivalence of \cref{parametrized_st_unst_over_gpd} extends to a $G$-symmetric monoidal equivalence, see \cref{GSM_straightening_unstraightening}.
\end{proof}

\begin{cor} \label{prop:G_Thom_of_projection_map}
  Let $X$ be a $G$-space, let $f \colon Y \to \B$ be a $G$-map and 
  \[ 
    A = \Th(Y \xto{f} \B) 
  \] 
  be its $G$-Thom spectrum.
  Then we have an equivalence of genuine $G$-spectra
  \[
    \Th (X \times Y \xto{pr} Y \xto{f} \B) \simeq A \otimes \Sigma^\infty_+ X.
  \]
\end{cor}
\begin{proof}
  The tensor product in $(\ulTopG)_{/\B}$ admits the following description
  \begin{align*}
    \left( Y \xto{f} \B \right) & \otimes \left( X \to \ast \xto{f_{\mathbb{S}}} \B \right) \\
    & \simeq \left( X \times Y \to \ast \times Y \xto{f_{\mathbb{S}} \times f} \B \times \B \xto{\otimes} \B \right) ,
  \end{align*}
  where $\mathbb{S}\in \Sp^G$ is the $G$-equivariant sphere spectrum.
  Since  
  the map $f_\mathbb{S} \colon \ast \to \B$ is constant on $\mathbb{S}$, the unit of $\operatorname{Pic}(\Sp^G) = \B_{[G/G]} $,
  the composition 
  \[
    Y = \ast \times Y \xto{f_{\mathbb{S}} \times f} \B \times \B \xto{\otimes} \B  
  \]
  is equivalent to $ f \colon Y \to \B$.
  Therefore 
  \begin{align*}
    \left( Y \xto{f} \B \right) \otimes \left( X \to \ast \xto{\mathbb{S}} \B \right) 
    \simeq \left( X \times Y \xto{pr}  Y \xto{f}  \B \right).
  \end{align*}
  The $G$-Thom functor is $G$-symmetric monoidal so
  \begin{align*}
    \Th   \left( X \times Y \xto{pr}  Y \xto{f}  \B \right) 
    & \simeq 
    \Th \left( Y \xto{f} \B \right) \otimes \Th \left( X \to \ast \xto{\mathbb{S}} \B \right) \\
    & \simeq 
    A \otimes \left( \mathbb{S} \otimes \Sigma^\infty_+ X \right) 
    \simeq 
    A \otimes \Sigma^\infty_+ X ,
  \end{align*}
  where the second equivalence follows from \cref{prop:G_Thom_of_nullhomotpic_map}.
\end{proof}

\subsection{Genuine equivariant factorization homology of  Thom spectra} \label{sec:Thom_GFH}
We prove that our $G$-Thom spectrum functor respects equivariant factorization homology.

We will use the following equivariant version of \cite[lem. 3.25]{AF}.
\begin{lem} \label{GFH_coeffs_change}
  Let $\ul\C^\otimes, \ul\D^\otimes$ be presentable $G$-symmetric monoidal $G$-$\infty$-categories, and let 
  \[
    F \colon \ul\C^\otimes \to \ul\D^\otimes
  \]
  be a $G$-symmetric monoidal $G$-functor whose restriction to the underlying $G$-$\infty$-categories strongly preserves $G$-colimits (see \cite[def. 11.2]{Expose2}).
  Let \( A\in \Alg_{\EE_V}(\ul\C) \) be an $\EE_V$-algebra in $\C$.
  Then the composition 
  \begin{align*}
    F \circ \int_- A \colon \ulmfld^{G,V-fr} \to \ul\C \to \ul\D
  \end{align*}
  is equivalent to $\int_- F(A) \colon \ulmfld^{G,V-fr} \to \ul\D$, where $F(A)$ is the
  $\EE_V$-algebra in $\D$ defined as
  \[
    \ul\disk^{G,V-fr,\sqcup} \xto{A} \ul\C^\otimes \xto{F} \ul\D^\otimes.
  \]
\end{lem}
\begin{proof}
  The $G$-functor $F\circ \int_- A$ extends to a $G$-symmetric monoidal $G$-functor 
  \begin{align*}
    \ulmfld^{G,V-fr,\sqcup} \xto{\int_- A} \ul\C^\otimes \xto{F^\otimes} \ul\D^\otimes.
  \end{align*}
  The $G$-symmetric monoidal functor $\int_- A$ satisfies $G$-$\otimes$-excision and respects $G$-sequential unions (see \cite[prop 5.2.3, prop. 5.3.3]{GFH}).
  Since $F$ strongly preserves $G$-colimits,  
  $F \circ \int_- A$ also satisfies $G$-$\otimes$-excision and respects $G$-sequential unions.
  By \cref{thm:GFHaxiom}, 
 \( F \circ \int_- A \) is equivalent to a genuine $G$-factorization homology 
  \( 
     \ulmfld^{G,V-fr,\sqcup} \to  \ul\D^\otimes 
  \)
  with coefficients given by the restriction of $F \circ \int_- A$ to
  $\ul\disk^{G,V-fr,\sqcup}$, which is exactly $F(A)$.
\end{proof}

\begin{prop} \label{eq:G_Thom_commutes_with_GFH}
  If $A$ is 
  an $\EE_V$-algebra
  in $\ulTopG_{/\B}$, then we have a natural equivalence 
  \begin{align*}
    \int_- \Th(A) \simeq \Th \left( \int_- A \right).
  \end{align*}
\end{prop}
\begin{proof}
  By \cref{Th_construction} and \cref{thm:G_Thom_as_GSM_functor},
  the $G$-Thom spectrum $$\Th  \colon \ulTopG_{/\B} \to \ul\Sp^G$$ strongly preserves
  $G$-colimits, and extends to a $G$-symmetric monoidal $G$-functor.
  The claim now follows from \cref{GFH_coeffs_change} with $\ul\C = \ulTopG_{/\B}$ and $\ul\D =\ul\Sp^G$.
\end{proof}

\section{$V$-fold loop spaces and $\EE_V$-algebras}
\label{sec:v-fold-loop}
Recall that the coefficients for $\displaystyle\int_M -$, where $M$ is a $V$-framed $G$-manifold, are
 $\EE_V$-algebras as in \cref{def:Vdisk_alg}.
The $\EE_V$-algebras  we consider in this paper typically arise in two ways: as Thom spectra of $V$-fold loop maps, or as $G$-commutative algebras. 
In this section, we will explain how we consider each of those as an $\EE_V$-algebra, and give a description of the equivariant factorization homology of the $G$-Thom spectrum of a $V$-fold loop map. 

\subsection{$G$-commutative algebras as $\EE_V$-algebras}
\label{sec:g-comm-alg}
The first examples of $\EE_V$-algebras are given by $G$-commutative algebras. A $G$-commutative algebra is a map of
$G$-$\infty$-operads from the terminal $G$-$\infty$-operad to a $G$-symmetric monoidal
$G$-$\infty$-category $\ul\C^\otimes \to \GFin_*$ (\cite[ex. 3.3]{Nardin_thesis}).
Since the terminal $G$-$\infty$-operad $\GFin_* \to \GFin_*$ is itself a $G$-symmetric monoidal $G$-$\infty$-category, a $G$-commutative algebra $A \in \CAlg_G(\ul\C)$ is a \myemph{lax} $G$-symmetric monoidal functor $A \colon \GFin_* \to \ul\C^\otimes$.

Note that the structure map \( \ul\disk^{G,V-fr,\sqcup} \to \GFin_* \) can itself be considered as a $G$-symmetric monoidal functor.
Therefore we can consider any $G$-commutative algebra $A\in \CAlg_G(\ul\C)$ as an $\EE_V$-algebra by precomposition with the structure map 
\begin{align*}
  \ul\disk^{G,V-fr,\sqcup} \to \GFin_* \xto{A} \ul\C^\otimes.
\end{align*}

For $\ul\C^\otimes = \ul\Top^{G,\times}$ the notion of $G$-commutative algebra agrees with a $G$-symmetric monoidal structure on a $G$-space, considered as a $G$-$\infty$-groupoid.

\subsection{$V$-fold loop spaces as $\EE_V$-algebras} 
\label{sec:v-fold-loop-space}
In this subsection, we explain how to establish $V$-fold loop spaces as $G$-symmetric monoidal functors 
\begin{equation*}
  \Omega^VX: \ul\disk^{G,V-fr,\sqcup} \to \ul\Top^{G,\times}.
\end{equation*}

\begin{construction} \label{EValg_of_V_loop_space}
Fixing an $n$-dimensional representation $V$, we can precompose the $G$-$\infty$-functor
$\ul{\Map}_*\left( (-)^+,X \right)$ defined in \eqref{eq:Mapc_X} with the forgetful
  map from $V$-framed $G$-manifolds to $G$-manifolds. We can further restrict to $V$-framed $G$-disks
  and obtain
  \begin{align}
    \label{OmegaVX}
    \ul\disk^{G,V-fr,\sqcup} \subset \ulmfld^{G,V-fr,\sqcup} \to \GmfldD \xto{(-)^+ } (\ul{\Top}^{G,\vee}_*)^{vop}  \xto{\ul{\Map}_*(-,X)} \ul{\Top}^{G,\times}.
  \end{align}
  We denote the composite by $\Omega^VX$, and it is an $\EE_V$-algebra in $\ulTopG$. 
  The underlying $G$-space of $\Omega^V X$ is given by evaluating the functor $\Omega^V X$ at $V\in \ul\disk^{G,V-fr}_{[G/G]}$, 
  which is the $G$-space $\Map_{*}( S^V, X)$ of pointed maps from the representation sphere $S^V =V^+$ to $X$. 
  Note that the $G$-space $\Map_{*}( S^V, X)$ is equivalent to the $V$-fold loop space of $X$, which justifies the name  $\Omega^V X$. 
\end{construction}

\begin{rem}
  Note that \cref{EValg_of_V_loop_space} is functorial in $X\in \Top^G_{*} \simeq (\ulTopG_*)_{[G/G]}$.
  The parametrized Yoneda embedding $j \colon \ulTopG_* \to \Psh(\ulTopG)$ of \cite[def. 10.2]{Expose1} is a $G$-functor,
  and in particular defines a map between the fibers over $G/G\in \OGop$, which is a functor of $\infty$-categories
  \begin{align*}
    \Top^G_{*} \to \Fun_G((\ulTopG_*)^{vop} , \ulTopG) ,\quad X \mapsto \Map_*(-,X) \colon (\ulTopG_*)^{vop} \to \ulTopG.
  \end{align*}
  This functor factors through the full subcategory of $\Fun_G((\ulTopG_*)^{vop} , \ulTopG)$ spanned by $G$-functors preserving finite $G$-products.
  Let $\Fun^\times_G((\ulTopG_*)^{vop} , \ulTopG)$ denote the $\infty$-category of $G$-symmetric monoidal $G$-functors with respect to the $G$-Cartesian $G$-symmetric monoidal structures.
  Since $\Fun^\times_G((\ulTopG_*)^{vop} , \ulTopG) \subset \Fun_G((\ulTopG_*)^{vop} , \ulTopG)$ is equivalent to the full subcategory described above, the functor $\Map_*(-,X)$ lifts to 
  \begin{align*}
    \Top^G_{*}  \to \Fun^\times_G((\ulTopG_*)^{vop} , \ulTopG) ,\quad X \mapsto \Map_*(-,X) \colon (\ulTopG_*)^{vop} \to \ulTopG.
  \end{align*}

  Precomposition with the $G$-symmetric monoidal functor 
  \begin{align*}
    \ul\disk^{G,V-fr,\sqcup} \subset \ulmfld^{G,V-fr,\sqcup} \to \GmfldD \xto{(-)^+ } (\ul\Top^{G,\vee}_*)^{vop} 
  \end{align*}
  defines a functor of $\infty$-categories
  \begin{align*}
    \Top^G_{*} \to \Fun_G^\times ((\ulTopG_*)^{vop} , \ulTopG) \to  \Fun_G^\otimes ( \ul\disk^{G,V-fr}, \ulTopG) ,
    \quad X \mapsto \Omega^V X.
  \end{align*}
\end{rem}

\subsection{$V$-fold loop maps as $\EE_V$-algebras}
\label{sec:v-fold-loop-map}
Let $f \colon  X\to Y$ be a map of pointed $G$-spaces.
From the functoriality of \cref{EValg_of_V_loop_space} we get $\Omega^V f \colon \Omega^V X \to
\Omega^V Y$, which is a map of $\EE_V$-algebras in $\ulTopG$.
 Suppose $\Omega^V Y$ is a $G$-commutative algebra. That is, there is a $G$-commutative algebra $A$ such that the following construction in \cref{sec:g-comm-alg}
  $$ \ul\disk^{G,V-fr,\sqcup} \to \GFin_* \xto{A} \ulTopG$$
  is equivalent as a $G$-symmetric monoidal functor to
  $$\Omega^V Y  \colon  \ul\disk^{G,V-fr,\sqcup} \to \ulTopG.$$
  Taking $\ul\C^{\otimes} = (\ulTopG)^{\times}$ in 
  \cref{O_algs_in_G_slice}, we have an equivalence of $\infty$-categories 
\[  
  \Alg_{\EE_V} \left( (\ulTopG)_{/\ul{A}} \right) \simeq \Alg_{\EE_V}(\ulTopG)_{/A} ,
\] So, the map $\Omega^V f$ can be considered as an $\EE_V$-algebra in
  $\ulTopG_{/A}$.

  \begin{ex} \label{eq:GFH_G_Thom_V_loop_map}
Consider an $\EE_V$-algebra in $\ulTopG_{/\B}$ of the form $\Omega^V f: \Omega^VX \to \B$.
Applying \cref{eq:G_Thom_commutes_with_GFH} to coefficients $A = \Omega^Vf$,
 we get a natural equivalence of $G$-functors
\begin{align*} 
  \int_- \Th(\Omega^V f) \simeq \Th\left( \int_- \Omega^V f \right):  \ulmfld^{G,V-fr} \to \ul\Sp^G.
\end{align*}
  \end{ex}

\subsection{Equivariant factorization homology of equivariant Thom spectra}
In this subsection we describes the interaction between equivariant Thom spectra and equivariant factorization homology.
Our main result is \cref{thm-fh-thom}, which describes the genuine $G$-factorization homology theory 
\[
  \int_- \Omega^V f  \colon \ulmfld^{G,V-fr} \to \ulTopG_{/\B} 
\]
appearing in \cref{eq:GFH_G_Thom_V_loop_map}.

In fact, this description works when $\B$ is replaced by any $G$-commutative algebra $B$ in $G$-spaces.
For the next propositions we fix $B\in \CAlg_G(\ulTopG)$ to be a $G$-commutative algebra in
$G$-spaces and $\Omega^Vf: \Omega^VX \to B$ to be a map of $\EE_V$-algebras.
By \cref{sec:v-fold-loop-map}, $\Omega^Vf$ can be considered as an $\EE_V$-algebra in
$\ulTopG_{/\ul{B}}$. 

\begin{prop} \label{prop:fgt-fh-loopf}
  Let $\Omega^V f  \colon \Omega^V X \to B$ be a $V$-fold loop map.
  Then we have a natural equivalence 
  \begin{align*}
     \fgt\left( \int_- \Omega^V f \right) \simeq  \int_- \Omega^V X 
  \end{align*}
  of $G$-functors \( \ulmfld^{G,V-fr} \to \ulTopG \).
\end{prop}
\begin{proof}
We have the following properties of the forgetful $G$-functor \(  \ulTopG_{/\ul{B}}  \to \ulTopG \):
\begin{enumerate}
\item The forgetful $G$-functor \( \fgt \colon \ulTopG_{/\ul{B}}  \to \ulTopG \)
  preserves $G$-colimits. 
\item Let $(\ulTopG)^\times$ denote the $G$-Cartesian $G$-symmetric monoidal structure on $\ulTopG$.
  The forgetful $G$-functor \( \fgt \colon \ulTopG_{/\ul{B}}  \to \ulTopG \) extends to a $G$-symmetric monoidal functor 
  \[ 
    \fgt \colon(\ulTopG)^\times_{/\ul{B}}  \to (\ulTopG)^\times.
  \]
\end{enumerate}
Note that the composition \( \ul\disk^{G,V-fr,\sqcup} \xto{\Omega^V f}  (\ulTopG)^\times_{/\ul{B}}  \xto{\fgt} (\ulTopG)^\times \) is equivalent to $\Omega^V X$.
The claim follows from \cref{GFH_coeffs_change}.
\end{proof}

\begin{thm}\label{thm-fh-thom}
  Let $X$ be a pointed $G$-space and $\Omega^V f \colon \Omega^V X \to \B$ be a map of $\EE_V$-algebras.
  Then for every $V$-framed $G$-manifold $M$, there is an equivalence of genuine $G$-spectra
  \begin{equation*}
    \int_M \Th(\Omega^V f) \simeq \Th \left( \int_M \Omega^V X \xto{(\Omega^Vf)_{*}} \int_M \B \to \B \right).
  \end{equation*}
  Here, $\Th \colon \ulTopG_{/\B} \to \ul\Sp^G$ is the parametrized Thom $G$-functor in \cref{Th_construction}.
\end{thm}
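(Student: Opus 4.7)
The plan is to deduce the theorem by combining two ingredients that have been set up just above: (i) \cref{G_Thom_commutes_with_GFH}, which says the parametrized $G$-Thom functor commutes with equivariant factorization homology of $\EE_V$-algebras in $\ulTopG_{/\B}$; and (ii) the identification of $\int_M \Omega^V f$, viewed as an object of $(\ulTopG_{/\B})_{[G/G]}$, with the composite $G$-space-over-$\B$ appearing on the right-hand side. Once both are in hand, the theorem follows by applying $\Th$ to the second identification.

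First, I would regard $\Omega^V f \colon \Omega^V X \to \B$ as an object of $\Alg_{\EE_V}(\ulTopG_{/\B})$, using the equivalence $\Alg_{\EE_V}(\ulTopG_{/\B}) \simeq \Alg_{\EE_V}(\ulTopG)_{/\B}$ from Section 5.3 (together with the $G$-symmetric monoidal slice structure on $\ulTopG_{/\B}$ coming from the $G$-commutative algebra structure on $\B$, as in \cref{G_Picard_as_G_comm_alg} and \cref{G_slice_GSM_str}). Applying \cref{G_Thom_commutes_with_GFH} to this $\EE_V$-algebra immediately gives a natural equivalence of genuine $G$-spectra
\[
\int_M \Th(\Omega^V f) \simeq \Th\!\left(\int_M \Omega^V f\right),
\]
and the remaining work is to identify the object $\int_M \Omega^V f \in (\ulTopG_{/\B})_{[G/G]}$ with the map of $G$-spaces in the statement.

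To identify the underlying $G$-space, I would invoke the two lemmas of Section 5.4 stating that the forgetful $G$-functor $\fgt \colon \ulTopG_{/\B} \to \ulTopG$ is $G$-symmetric monoidal and strongly preserves $G$-colimits. Then \cref{GFH_coeffs_change}, applied with $\ul{\C} = \ulTopG_{/\B}$ and $\ul{\D} = \ulTopG$, yields a natural equivalence $\fgt\!\left(\int_- \Omega^V f\right) \simeq \int_- \Omega^V X$. To identify the reference map to $\B$, I would use that $\Omega^V f$ fits into a morphism $\epsilon \colon \Omega^V f \to \mathrm{id}_\B$ in $\Alg_{\EE_V}(\ulTopG_{/\B})$, where $\mathrm{id}_\B$ is the terminal object viewed via the $G$-commutative algebra structure on $\B$. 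Functoriality of $\int_M$ in the coefficient algebra produces $\epsilon_* \colon \int_M \Omega^V f \to \int_M \mathrm{id}_\B$ in $(\ulTopG_{/\B})_{[G/G]}$; after forgetting to $\ulTopG$ this is exactly $(\Omega^V f)_* \colon \int_M \Omega^V X \to \int_M \B$, and the target is equipped with its canonical augmentation $\int_M \B \to \B$ coming from $\int_M \mathrm{id}_\B$. Composing produces the map
\[
\int_M \Omega^V X \xto{(\Omega^V f)_*} \int_M \B \to \B,
\]
and this is $\int_M \Omega^V f$ as an object of $(\ulTopG_{/\B})_{[G/G]}$. Applying $\Th$ and combining with the first step gives the desired equivalence.

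The main conceptual obstacle is not any single computation but assembling all the parametrized structure coherently: namely, ensuring that the slice $G$-symmetric monoidal structure on $\ulTopG_{/\B}$ really matches up with the $\infty$-category of $\EE_V$-algebras over $\B$, that the forgetful functor is both $G$-symmetric monoidal and $G$-colimit-preserving (so \cref{GFH_coeffs_change} actually applies), and that the identification of $\int_- \mathrm{id}_\B$ with the projection $\int_- \B \to \B$ is natural. These are essentially what the unnumbered lemmas and propositions preceding the theorem are designed to guarantee, so the argument is really an assembly of those pieces rather than a new computation.
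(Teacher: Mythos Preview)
Your proposal is correct and follows essentially the same approach as the paper: the paper's argument is precisely the discussion in Section~5.4 leading up to the theorem, which applies \cref{G_Thom_commutes_with_GFH} to get $\int_M \Th(\Omega^V f) \simeq \Th(\int_M \Omega^V f)$, then identifies $\int_M \Omega^V f$ as an object of $(\ulTopG_{/\B})_{[G/G]}$ by using the forgetful functor (via \cref{GFH_coeffs_change}) for the underlying $G$-space and the morphism $\epsilon \colon \Omega^V f \to \mathrm{id}_\B$ for the structure map, exactly as you describe.
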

\begin{proof}
  We write $B=\Pic(\ul\Sp^G)$.
By \cref{prop:fgt-fh-loopf}, 
$$\int_M \Omega^V f \in (\ulTopG_{/\ul{B}})_{[G/G]} \simeq (\Top^G)_{/B}$$
is given by a map of $G$-spaces
\begin{equation}
  \label{eq:loop-V-f}
  \displaystyle\int_M \Omega^V X \to B.
\end{equation}

Our next task is to describe this map.
Consider $id_B$ as an object of $\Alg_{\EE_V}(\ulTopG)_{/B}$, and observe that the map $\Omega^V f
\colon \Omega^V X \to B$ can be considered as a map $\epsilon: \Omega^V f \to id_B$ in $\Alg_{\EE_V}(\ulTopG)_{/B}\simeq \Alg_{\EE_V}( \ulTopG_{/\ul{B}})$. 
This map of $\EE_V$-algebras induces a natural transformation 
\begin{equation*}
\epsilon_{*} \colon \displaystyle\int_- \Omega^V f \to \displaystyle\int_- id_B.
\end{equation*}
The forgetful functor \( \fgt \colon \ulTopG_{/\ul{B}} \to \ulTopG \) gives
a map \( fgt(\epsilon_{*}) = (\Omega^Vf)_{*} \colon \displaystyle\int_- \Omega^V X \to
\displaystyle\int_- B \) of $G$-spaces over $B$.
It follows that the $G$-map of
\eqref{eq:loop-V-f} factors as 
\begin{align*}
  \int_M \Omega^V X \xto{(\Omega^Vf)_{*}} \int_M B \to B,
\end{align*}
where \(\displaystyle\int_MB \to B \) is given by $\displaystyle\int_M
id_{B}$. The claim then follows from \cref{eq:GFH_G_Thom_V_loop_map}.
\end{proof}

\section{Computations of equivariant factorization homology}
\label{sec:computation}
Assume that $G$ is a finite group and $V$ is a finite dimensional $G$-representation.
In this section, we prove \cref{thm-comp}, which deals with factorization homology when the algebra $A$ is a Thom spectrum of a more highly commutative map than $\EE_V$; it is as commutative as a representation that $M$ embeds in. We apply \cref{thm-comp} to compute the genuine equivariant factorization homology of certain Thom spectra. In \cref{cor-MU}, we compute the factorization homology of the Real bordism
spectrum, $MU_\mathbb{R}$. In \cref{cor-em}, we treat Eilenberg--MacLane spectra; see the appendix
by Hahn--Wilson for a computation of $\THR(\HZ)$. In \cref{cor-relative}, we compute $C_2$-relative
$\THH$ (see \cite{TC_via_the_norm}), $\THH_{C_2}(\HF_2)$. This is an $S^1$-spectrum, but we compute its underlying spectrum.

\subsection{A computation theorem}
\begin{thm}\label{thm-comp} Let $A$ be the $G$-Thom spectrum of an $\EE_{V \oplus W}$-map,
  $$\Omega^{V \oplus W} f: \Omega^{V \oplus W}X \to \Pic(\ul\Sp^G),$$
  with $\pi_k(X^H) = 0$ for all subgroups $H \subset G$ and $k < \mathrm{dim}((V \oplus W)^H)$.
  Let $M$ be a $G$-manifold of the same dimension as the representation $V$.
  Suppose that $M \times W$ embeds equivariantly in $V \times W$, and that there is an equivariant embedding from the unit disk $D(V) \hookrightarrow M$ (call its image $D$). Then

$$\int_{M \times W} A \simeq A \otimes \Sigma^\infty_+ \mathrm{Map}_*(M^+ - D, \Omega^W X).$$
\end{thm}

\begin{rem}
  Recall that we use $\otimes$ to denote the smash product of ($G$-)spectra, and
  $\mathrm{Map}_{*}$ to denote the $G$-space of non-equivariant based maps.
\end{rem}

A particularly useful corollary is obtained by setting $W = \mathbb{R}$ and $M = S^V$.

\begin{cor}\label{cor-comp}
  Let $A$ be the $G$-Thom spectrum of an $\EE_{V \oplus \mathbb{R}}$-map
  $\Omega^{V \oplus \mathbb{R}}X \to \Pic(\ul\Sp^G)$ with $\pi_k(X^H) = 0$ for
  all subgroups $H \subset G$ and $k < \mathrm{dim}(V^H)+1$. 
  Then
  $$\int_{S^V \times \mathbb{R}} A \simeq A \otimes \Sigma^\infty_+ (\Omega X).$$
\end{cor}

\begin{proof}[Proof of \cref{thm-comp}]
  Denote the equivariant embedding by $emb: M \times W \hookrightarrow V \times W$.
  Let $M \times W$ be $(V \oplus W)$-framed as a submanifold.
  Denote by $f: X \to B^{V\oplus W} \Pic(\ul\Sp^G)$ the map whose $\Omega^{V \oplus W}$-looping is
  $\Omega^{V \oplus W} f: \Omega^{V \oplus W}X \to \Pic(\ul\Sp^G)$. 

  Consider the following commutative diagram, where the first horizontal map is an equivalence by
  \cref{thm:NPD}. The right hand column uses the homeomorphism $(M \times W)^+ \cong \Sigma^W(M^+)$.
 
$$\xymatrix{
  \int_{M \times W} \Omega^{V \oplus W} X \ar[r]^-{\sim} \ar[d]^-{(\Omega^{V \oplus W} f )_*} &  \mathrm{Map}_*(\Sigma^W(M^+), X) \ar[d]^-{f_*} \\
  \int_{M \times W} \Pic(\ul\Sp^G) \ar[r] \ar[d]^-{emb_*} & \mathrm{Map}_*(\Sigma^W(M^+), B^{V \oplus W} \Pic(\ul\Sp^G)) \ar[d]^-{emb_*} \\
  \int_{V \times W} \Pic(\ul\Sp^G) \ar[r] \ar[d]^-\sim & \mathrm{Map}_*(S^{V \oplus W}, B^{V \oplus W} \Pic(\ul\Sp^G)) \ar[d]^-\sim \\
  \Pic(\ul\Sp^G) \ar[r]^-= & \Pic(\ul\Sp^G)
}$$

By \cref{thm-fh-thom}, $\displaystyle\int_{M \times W} A$ is the equivariant Thom spectrum of the left hand vertical composite; thus it is equivalent to the equivariant Thom spectrum of the right hand vertical composite. Note that this composite is also equal to
$$\xymatrix{
\mathrm{Map}_*(\Sigma^W(M^+), X) \ar[r]^-{emb_*} & \mathrm{Map}_*(S^{V \oplus W}, X) \ar[r]^-{f_*} & \mathrm{Map}_*(S^{V \oplus W}, B^{V \oplus W} \Pic(\ul\Sp^G)) \simeq \Pic(\ul\Sp^G) 
}$$

The map $emb_*$ above is induced by the embedding $emb: M \times W \hookrightarrow V \times W$, equivalently by the Pontryagin-Thom collapse map associated to it, $S^{V \oplus W} \to \Sigma^W(M^+)$. We have an inclusion of a small disk $D \cong V$ in $M$, and the cofiber sequence
$$\Sigma^W(M^+ -D) \overset{\Sigma^Wi}{\longrightarrow}  \Sigma^W(M^+) \longrightarrow \Sigma^W S^V \cong S^{V \oplus W}$$
is split (up to homotopy) by this Pontryagin-Thom collapse map, as the composite collapse $(V \times W)^+ \to (M \times W)^+ \to (D \times W)^+ \cong (V \times W)^+$ is homotopic to the identity. Thus we have an equivalence

$$\xymatrix{
  (emb_*, i^*): \mathrm{Map}_*(\Sigma^W(M^+), X) \ar[r]^-{\sim}
  & \mathrm{Map}_*(S^{V \oplus W}, X) \times \mathrm{Map}_*(\Sigma^W(M^+ -D), X) \ar[d]^-\sim \\
  & \mathrm{Map}_*(S^{V \oplus W}, X) \times \mathrm{Map}_*(M^+ - D, \Omega^W X)
}$$

Furthermore, this equivalence fits in the following commutative diagram:
$$\xymatrix{
\mathrm{Map}_*(\Sigma^W(M^+), X) \ar[r]^-{(emb_*, i^*)} \ar[r]_-{\sim} \ar[d]^{emb_{*}} & \mathrm{Map}_*(S^{V \oplus W}, X) \times \mathrm{Map}_*(M^+ - D, \Omega^W X) \ar[d]^{pr_1} \\
\mathrm{Map}_*(S^{V \oplus W}, X) \ar[r]^-= \ar[d]^{f_*} & \mathrm{Map}_*(S^{V \oplus W}, X) \ar[d]^-{f_*} \\
\mathrm{Map}_*(S^{V \oplus W}, B^{V \oplus W} \Pic(\ul\Sp^G)) \ar[r]^-= & \mathrm{Map}_*(S^{V \oplus W}, B^{V \oplus W} \Pic(\ul\Sp^G))
}$$

We have shown that $\displaystyle\int_{M \times W}A$ is equivalent to the equivariant Thom spectrum of the left hand vertical composite, thus it is also equivalent to the equivariant Thom spectrum of the right hand vertical composite, which, by \cref{prop:G_Thom_of_projection_map}, is equivalent to $A \otimes \Sigma^\infty_+ \mathrm{Map}_*(M^+ - D, \Omega^W X)$.
\end{proof}

\subsection{Some computational corollaries}
Our first application computes the factorization homology of $MU_\R$.
The Real bordism spectrum $MU_\R$ is the Thom spectrum of a map of $C_2$-$\EE_\infty$ spaces $BU_\mathbb{R} \to \Pic(\ul\Sp^{C_2})$ (for example, as in Remark 13 of \cite{HL}).
Since $(BU_\mathbb{R})^e \simeq BU$ and $(BU_\mathbb{R})^{C_2}\simeq BO$, the $C_2$-space $BU_\mathbb{R}$ is
$C_2$-connected.

\begin{lem}\label{lem-connect}
  If $X$ is a $G$-connected $\EE_V$-algebra, then $\pi_k(B^V X)=0$ for $k \leq \mathrm{dim}(V^H)$.
  Thus, the connectivity condition in \cref{thm-comp} or \cref{cor-comp} is satisfied when $X$ is
  $G$-connected.
\end{lem}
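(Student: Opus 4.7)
Since $X^H$ is path-connected for every $H \leq G$, the $\EE_V$-algebra $X$ is group-like, so by the equivariant recognition principle (in the sense of Guillou--May, Shimakawa, or Costenoble--Waner) there is a pointed $G$-space $B^V X$ equipped with an equivalence of $G$-$\EE_V$-algebras $X \simeq \Omega^V B^V X$. Concretely, $B^V X$ can be realised as May's equivariant $V$-fold bar construction, which carries a natural skeletal filtration whose $H$-fixed-point cells lie in dimensions $\geq \dim V^H$; this immediately gives the vanishing $\pi_k((B^V X)^H) = 0$ for $k < \dim V^H$ (an equivariant refinement of the classical fact that $B^n Y$ is $(n-1)$-connected for any pointed space $Y$). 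The base case $H = \{e\}$ of everything else is just the classical non-equivariant recognition theorem applied to the connected $\EE_{\dim V}$-algebra $X^e$: $(B^V X)^e$ is its standard $\dim V$-fold delooping, so $\pi_{\dim V}((B^V X)^e) = \pi_0(X^e) = 0$.

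The remaining work is to show the borderline vanishing $\pi_{\dim V^H}((B^V X)^H) = 0$ for all $H$, and the plan is to induct on $|H|$. Applying $\Map_*^H(-, B^V X)$ to the cofibre sequence of pointed $H$-spaces $(S^V)^H = S^{V^H} \hookrightarrow S^V \to S^V/(S^V)^H$ gives a fibre sequence
\begin{equation*}
\Map_*^H\bigl(S^V/(S^V)^H,\, B^V X\bigr) \longrightarrow X^H \longrightarrow \Omega^{V^H}\bigl((B^V X)^H\bigr),
\end{equation*}
so that $\pi_{\dim V^H}((B^V X)^H) = \pi_0(\Omega^{V^H}((B^V X)^H))$. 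In an appropriate $H$-CW decomposition (e.g.\ obtained from an $H$-invariant Morse function on $V$), the cells of $S^V/(S^V)^H$ of isotropy $K < H$ sit in dimension $\dim V^K$, so the inductive hypothesis that $(B^V X)^K$ is $\dim V^K$-connected combined with a standard obstruction-theoretic argument forces the restriction map $X^H \to \Omega^{V^H}((B^V X)^H)$ to be surjective on $\pi_0$. Combined with the hypothesis $\pi_0(X^H) = 0$, this yields the desired $\pi_{\dim V^H}((B^V X)^H) = 0$.

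The main difficulty I anticipate is the cellular bookkeeping in the inductive step: pinning down the dimensions of $K$-isotropy cells in a chosen $H$-CW structure on $S^V$, and running the corresponding cell-by-cell obstruction argument rigorously. A cleaner alternative would be to bypass this entirely by citing the explicit $G$-CW cell structure on May's equivariant $V$-fold bar construction available in the equivariant iterated loop-space literature; that model directly identifies $\pi_{\dim V^H}((B^V X)^H)$ with the equivariant homotopy group $\pi_0^H(\Omega^V B^V X) \cong \pi_0(X^H)$, reducing the whole statement to the hypothesis that $X$ is $G$-connected.
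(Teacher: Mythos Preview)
Your approach is correct but considerably more involved than the paper's. The paper argues directly via the monadic bar construction $B^V X = B(\Sigma^V, \mathrm{D}_V, X)$: since fixed points commute with geometric realization, it suffices to check that each simplicial level $\Sigma^V \mathrm{D}_V^{\,n} X$ is $V$-connected (i.e., its $H$-fixed points are $\dim V^H$-connected for every $H$). Because $(\Sigma^V \mathrm{D}_V^{\,n} X)^H \cong \Sigma^{V^H}(\mathrm{D}_V^{\,n} X)^H$, this reduces to showing that $(\mathrm{D}_V^{\,n} X)^H$ is \emph{connected}, which follows from the lemma that $\mathrm{D}_V$ preserves $G$-connectedness (cited in the paper as Lemma~8.4 of the third author's paper). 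This single step handles the borderline case $k = \dim V^H$ simultaneously with $k < \dim V^H$, so no induction on $|H|$ or obstruction theory is needed. Your skeletal-filtration claim implicitly uses only that $(\mathrm{D}_V^{\,n} X)^H$ is nonempty, hence only yields $(\dim V^H - 1)$-connectivity and forces you into the inductive step; once one knows the stronger connectedness of $\mathrm{D}_V^{\,n} X$, the whole induction becomes unnecessary. Your ``cleaner alternative'' at the end is essentially the paper's argument. (A minor imprecision: cells of isotropy $K$ in $S^V$ sit in dimension at most $\dim V^K$, not necessarily exactly $\dim V^K$, but your obstruction argument only needs this inequality, so it still goes through.)
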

\begin{proof}
  We say that a $G$-space $X$ is $V$-connected if $\pi_k(X)=0$ for $k \leq \mathrm{dim}(V^H)$.
  The $V$-fold delooping can be computed by the monadic bar construction $B^VX = B(\Sigma^V, \mathrm{D}_V,
  X)$, where $\mathrm{D}_V$ is the monad associated to the little $V$-disk operad.
  Since fixed points commutes with geometric realization,
  it suffices to show that each $\Sigma^V \mathrm{D}_VX$ is $V$-connected. This follows from that
  $(\Sigma^V \mathrm{D}_VX)^H \cong \Sigma^{V^H}(\mathrm{D}_VX)^H$ and that $\mathrm{D}_VX$ is
  $G$-connected for a $G$-connected $X$ (for the proof, see \cite[Lemma 4.11]{Zou}).
\end{proof}

 \cref{cor-comp}  and \cref{lem-connect} combine to give
\begin{cor}\label{cor-MU}
We compute the equivariant factorization homology of representation spheres with coefficients in $MU_{\mathbb{R}}$.
$$\int_{S^V \times \mathbb{R}} MU_\mathbb{R} \simeq MU_\mathbb{R} \otimes \Sigma^\infty_+ (B^V BU_\mathbb{R})$$

In particular,
$$\THR(MU_\mathbb{R}) \simeq MU_\mathbb{R} \otimes \Sigma^\infty_+ (B^\sigma BU_\mathbb{R})$$
\end{cor}

\medskip

We now use \cref{thm-comp} and \cref{cor-comp}, along with theorems of Behrens--Wilson \cite{BehrensWilson} and Hahn--Wilson \cite{HahnWilson} which show that certain equivariant Eilenberg--MacLane spectra are Thom spectra, to compute  equivariant factorization homology with coefficients in these spectra. 

Take $G= C_2$. Let $\sigma$ be its sign representation, $\rho \cong \sigma + 1$ its 2-dimensional
regular representation, and $\lambda \cong 2\sigma$ its two-dimensional rotation representation. Let
$\THR$ denote Real topological Hochschild homology \cite{Real_THH}, which is equivalent to
$\int_{S^\sigma}$ by \cite[remark 7.1.2]{GFH}.
\begin{cor}\label{cor-em} 
   We compute $\THR$ and $\int_{S^\lambda}$ of certain Eilenberg--MacLane spectra.
  \begin{enumerate}[(1)]
    \item $\THR(\HF_2) \simeq \HF_2 \otimes \Sigma^\infty_+ (\Omega S^{\rho +1}) \simeq \HF_2 \otimes \Sigma^\infty_+ (\Omega^{\sigma} S^{\lambda +1}) $
    \item $\THR(\HZ_{(2)}) \simeq \HZ_{(2)} \otimes \Sigma^\infty_+ (\Omega^\sigma (S^{\lambda +1}\langle \lambda +1 \rangle))$ 
    \item $\int_{S^\lambda} \HF_2 \simeq \HF_2 \otimes \Sigma^\infty_+ S^{\lambda +1}$
    \item $\int_{S^\lambda} \HZ_{(2)} \simeq \HZ_{(2)} \otimes \Sigma^\infty_+ (S^{\lambda +1}\langle \lambda +1 \rangle)$
  \end{enumerate}
  Here, $S^{\lambda +1}\langle \lambda +1 \rangle$ is the fiber of the unit map
  $S^{\lambda + 1} \to K(\ul\Z, \lambda+1) = \Omega^{\infty}\Sigma^{\lambda+1}\HZ$.
\end{cor}

\begin{proof}
By Theorem 1.2 of \cite{BehrensWilson}, the Eilenberg--MacLane spectrum $\HF_2$ is equivariantly the
Thom spectrum of a $\rho$-fold loop map $\Omega^\rho S^{\rho +1} \to  BO_{C_2}$. As the inclusion $BO_{C_2} \to \Pic(\ul\Sp^{C_2}) $ is a map of $G$-symmetric monoidal $G$-spaces, $\HF_2$ is also the Thom spectrum of a $\rho$-fold loop map $\Omega^\rho S^{\rho +1} \to \Pic(\ul\Sp^{C_2})$.
Thus \cref{cor-comp} yields the first equivalence of (1), with $V = \sigma$ and $W = \mathbb{R}$. 
Furthermore, Hahn and Wilson \cite{HahnWilson} have shown that $\HF_2$ is equivariantly the Thom
spectrum of a $(\lambda+1)$-fold loop map $\Omega^\lambda S^{\lambda + 1} \to \Pic(\mathbb{S}_{(2)})$, and
that $\HZ_{(2)}$ is equivariantly the Thom spectrum of a $(\lambda+1)$-fold loop map $\Omega^\lambda
(S^{\lambda + 1}\langle \lambda +1 \rangle) \to \Pic(\mathbb{S}_{(p)})$.
\cref{cor-comp} with \cref{rem:p-local_Thom} yields (3) and (4), with $V = \lambda$ and $W = \mathbb{R}$.

\medskip
For the second equivalence of (1) and for (2),
there is an isomorphism $\lambda + 1 \cong 2\sigma +1$ and an
equivariant embedding $S^\sigma \times \mathbb{R} \hookrightarrow \sigma +1$, thus an equivariant
embedding $S^\sigma \times \rho \hookrightarrow \lambda + 1$. We intend to use \cref{thm-comp} with
$M=S^{\sigma}$, $V=\sigma$, $W= \rho$ and
$$X=B^{\lambda+1}\Omega^{\lambda}S^{\lambda+1} \text{ or }
X=B^{\lambda+1}\Omega^{\lambda}(S^{\lambda+1}\langle \lambda+1\rangle) \text{ respectively. }$$
To check the assumptions, by \cref{lem-connect}
it suffices to show that $\Omega^\lambda S^{\lambda+1}$ and
$\Omega^{\lambda}(S^{\lambda+1}\langle \lambda+1\rangle)$ are $C_2$-connected. This is true by
\cref{cor:G-connected}, since $\mathrm{dim}((S^{\lambda})^e)=2$ and $\mathrm{dim}((S^{\lambda})^{C_2})=0$;
it can also be verified that $S^{\lambda+1}$ and $S^{\lambda+1}\langle \lambda+1 \rangle$ are
$C_2$-connected and underlying 2-connected.
So, from \cref{thm-comp} we obtain
$$\int_{S^\sigma \times \rho} \HF_{2} \simeq \HF_{2} \otimes \Sigma^\infty_+ \mathrm{Map}_*(\sigma_+, \Omega^\rho B^{\lambda +1} \Omega^\lambda S^{\lambda +1});$$
$$\int_{S^\sigma \times \rho} \HZ_{(2)} \simeq \HZ_{(2)} \otimes \Sigma^\infty_+ \mathrm{Map}_*(\sigma_+, \Omega^\rho B^{\lambda +1} \Omega^\lambda (S^{\lambda +1}\langle \lambda +1 \rangle)).$$
To simplify, we have $\Omega^\rho B^{\lambda +1}
\Omega^\lambda S^{\lambda +1} \simeq  B^{\sigma} \Omega^\lambda
S^{\lambda +1} \simeq \Omega^{\sigma} S^{\lambda +1}$, since $\Omega^{\sigma} S^{\lambda +1}$ is $C_2$-connected.
As $\sigma$ is equivariantly contractible, $ \mathrm{Map}_*(\sigma_+, \Omega^{\sigma}S^{\lambda +1})
\simeq \Omega^{\sigma} S^{\lambda +1}$.
The second equivalence is similar.
\end{proof}

\begin{rem}\label{rem-2^n}
By Theorem B and Theorem E of \cite{HahnWilson}, the same proof can be used to show that (3) and (4) also hold for $G = C_{2^n}$.
\end{rem}

From either of the two descriptions of $\THR(\HF_2)$ in \cref{cor-em}, one can use the Snaith splitting to compute $\THR(\HF_2)$ as an $\HF_2$-module.
From either  $\Sigma^{\infty}_+ \Omega \Sigma S^{\rho} \simeq \oplus_{k \geq 0} \mathbb{S}^{k\rho}$ or $\Sigma^{\infty}_+ \Omega^{\sigma} \Sigma^{\sigma} S^{\rho} \simeq \oplus_{k \geq 0} \mathbb{S}^{k\rho}$, we have
\[
 \THR(\HF_2) \simeq \HF_2 \otimes \Sigma^\infty_+ \Sigma^{\infty}_+ \Omega \Sigma S^{\rho} \simeq \oplus_{k \geq0} \Sigma^{k\rho}\HF_2.
\]
This recovers the additive part of $\THR(\HF_2)$ in \cite[Theorem 5.15]{Real_THH}. 
They use this module structure and the fact that $\THR(\HF_2)$ is an associative $\HF_2$-algebra to promote this to an equivalence of $C_2$-ring spectra. 
In particular,
\[
  \pi_\bigstar \THR(\HF_2) \cong \pi_\bigstar (\HF_2)[x_\rho].
\]

\medskip

Finally, we apply our theory to $C_2$-relative topological Hochschild homology.

\begin{cor}\label{cor-relative}
We compute $\THH_{C_2}(\HF_2)$.
$$\THH_{C_2}(\HF_2) \simeq H\mathbb{F}_2 \otimes \Sigma^\infty_+ (\Omega S^3)$$
\end{cor}

Note that this only computes the underlying (non-equivariant) spectrum of $\THH_{C_2}(\HF_2)$. Section 5 of \cite{AGHKK} uses \cref{G_Th_omnibus_thm} from our paper in a somewhat different approach to compute $\THH_{C_2}(\HF_2)$ as a $C_2$-spectrum.

\begin{proof}
 Let $g$ denote the generator of $C_2$, and for a $C_2$-space $X$, let $L_g X$ denote the twisted
 free loop space $\{ \gamma \colon I \to X \, \lvert \, \gamma(1) = g\gamma (0) \}$. Let $S^1_{rot}$
 denote the circle, with $C_2$ acting by rotation. Note that $S^1_{rot}$ is a $\R$-framed $C_2$-manifold.
 
  By \cite[proposition 7.2.2]{GFH}, for $A$ a $C_n$-ring spectrum, $\THH_{C_n}$ is given by the
  $C_n$-geometric fixed points of $\int_{S^1_{rot}} A$.
  Using the description of $\HF_2$ in the proof of \cref{cor-em} and \cref{thm-fh-thom},
    we have
    $$\displaystyle\int_{S^1_{rot}} \HF_2 \simeq \Th\left(\displaystyle\int_{S^1_{rot}}
      \Omega^{\rho}S^{\rho+1} \to \Pic(\ul\Sp^{C_2})\right).$$
    By \cref{thm:NPD}, we can identify the $G$-spaces:    
\begin{equation}
\label{eq:base}
\displaystyle\int_{S^1_{rot}}\Omega^{\rho}S^{\rho+1} \simeq \Map(S^1_{rot}, \Omega^\sigma S^{\rho + 1})
\end{equation}
Moreover, the Thom spectrum $\int_{S^1_{rot}} \HF_2$ has an $\HF_2$-orientation given by the composite
$$\xymatrix{
\HF_2 \otimes \displaystyle\int_{S^1_{rot}} \HF_2 \ar[r]^-{id \otimes i} & \HF_2 \otimes \HF_2 \ar[r]^-{mult} & \HF_2
}$$
Here, the map $i \colon \int_{S^1_{rot}} \HF_2 \to \HF_2$ exists because $\HF_2$ is commutative. For example, we can take $i$ to be induced on factorization homology by the embedding $S^1_{rot} \times \mathbb{R} \to \lambda$. 
  By the Thom isomorphism and \eqref{eq:base}, we have

$$ \HF_2 \otimes \int_{S^1_{rot}} \HF_2 \simeq \HF_2 \otimes \Sigma^{\infty}_+\mathrm{Map}(S^1_{rot}, \Omega^\sigma S^{\rho + 1})$$

Upon passage to geometric fixed points, we obtain
$$\Phi^{C_2}(\HF_2) \otimes \THH_{C_2}(\HF_2) \simeq \Phi^{C_2}(\HF_2) \otimes \Sigma^\infty_+ \mathrm{Map}_{C_2}(S^1_{rot}, \Omega^\sigma S^{\rho + 1})$$

Because $\Phi^{C_2}(\HF_2) \simeq H\mathbb{F}_2[t]$, where $t$ is in degree 1, we have that

$$H\mathbb{F}_2 \otimes \THH_{C_2}(\HF_2) \simeq H\mathbb{F}_2 \otimes \Sigma^{\infty}_+(L_g \Omega^\sigma S^{\rho + 1})$$

By Corollary 16 of \cite{KK},

$$ H\mathbb{F}_2 \otimes \Sigma^\infty_+ (L_g \Omega^\sigma S^{\rho + 1}) \simeq H\mathbb{F}_2 \otimes \Sigma^\infty_+ (L\Omega S^3) \simeq H\mathbb{F}_2 \otimes H\mathbb{F}_2 \otimes \Sigma^\infty_+ (\Omega S^3)$$

Note that $\THH_{C_2}(\HF_2)$ and $H\mathbb{F}_2 \otimes \Sigma^\infty_+ (\Omega S^3)$ are both $H\mathbb{F}_2$-modules (the former is in fact an algebra over $H\mathbb{F}_2$, as $H\underline{\mathbb{F}}_2$ is commutative.) They are equivalent after smashing with $H\mathbb{F}_2$, therefore they are equivalent.
\end{proof}

\begin{appendices}
  \appendix
\section{Some results in parametrized $\infty$-category theory}
\label{sec:Parametrized_prelim}
In this section we gather the results used in \cref{sec:G_Thom_spectra}, with partial proofs. 
Much of this section has to do with parametrized symmetric monoidal structures.
However, a complete treatment of this subject is beyond the scope of this paper. 
We will therefore consider only $G$-$\infty$-categories and $G$-symmetric monoidal structures (with the exception of \cref{sec:Parametrized_stunst}).

\subsection{Parametrized straightening/unstraightening} \label{sec:Parametrized_stunst}

In this subsection, we state the results that we need about parametrized straightening/unstraightening.
The results are stated for $S$-$\infty$-categories, i.e., coCartesian fibrations over a fixed $\infty$-category $S$.
Taking $S=\OGop$ recovers the notion of $G$-$\infty$-categories, used throughout this paper.

\begin{mydef}
  An $S$-fibration $X \fib \C$ (see \cite[def. 7.1]{Expose2}) is an $S$-right fibration if $X_{[s]} \fib \C_{[s]}$ is a right fibration for every $s\in S$.
\end{mydef}

\begin{mydef}
  Suppose $\C$ is an $S$-$\infty$-category.
  Let $(\ul\Cat_{\infty,S})_{/\ul{\C}}$ denote the $S$-slice category 
  (see \cite[not. 4.29]{Expose2}).
  For $s \in S$, write $\ul{s} = (S_{s/} \to S) \in \Cat_{\infty,S}$.
  Let 
  \[
    (\ul\Cat_{\infty,S})_{/\ul{\C}}^{S-right} \subseteq (\ul\Cat_{\infty,S})_{/\ul{\C}}
  \]
  denote the full subcategory spanned by $\ul{s}$-right fibrations 
  \[
    \left( X \fib \C \times_S \ul{s} \right) \in \left(\Cat_{\infty,\ul{s}}\right)_{/\C \times_S \ul{s}} \simeq \left( (\ul\Cat_{\infty,S})_{/\ul\C} \right)_{[s]}.
  \]
\end{mydef}

\begin{thm}(\cite{Expose1}, Proposition 8.3) \label{parametrized_st_unst} 
  Suppose $ \C \fib S$ is an $S$-category.
  Then there is a natural equivalence of $S$-categories 
  \[
    Y \colon \ul{\operatorname{Psh}}_S(\C) \iso (\ul\Cat_{\infty,S})_{/\ul{\C}}^{S-right}.
  \]
  If $x \in \C_{[s]}$ then $Y$ sends the representable presheaf $j(x)$ to the $\ul{s}$-right fibration
  \[
    \left( \C_{/\ul{x}} \fib \C \times_S \ul{s} \right) \in \left(\Cat_{\infty,\ul{s}}\right)_{/\C \times_S \ul{s}}^{\ul{s}-right} \simeq \left( (\ul\Cat_{\infty,S})_{/\ul\C}^{S-right} \right)_{[s]}.
  \]
\end{thm}

For the following statements, let $B \in \ul\Top_{S}$ be an $S$-$\infty$-groupoid and
\(
  j \colon B \to  \ul{\operatorname{Psh}}_S(B)
\)
its parametrized Yoneda embedding ( \cite[thm. 10.5]{Expose1} ).
\begin{lem}
  Suppose $X \fib B$ is an $S$-right fibration of $S$-$\infty$-categories.
  Then $X$ is an $S$-$\infty$-groupoid.
\end{lem}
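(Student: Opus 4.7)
The plan is to verify the statement fiberwise, reducing to two classical facts about fibrations of simplicial sets. By hypothesis $X$ is an $S$-$\infty$-category, so the structure map $X \to S$ is a coCartesian fibration. Under the standard dictionary, $S$-$\infty$-groupoids correspond to left fibrations over $S$, and a coCartesian fibration is a left fibration precisely when every fiber is an $\infty$-groupoid (see \cite[HTT 2.4.2.4]{HTT}). Consequently, it suffices to show that $X_{[s]}$ is a Kan complex for every object $s \in S$.

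For the fiberwise check, I would use the factorization $X_{[s]} \to B_{[s]} \to \{s\}$. Since $B$ is an $S$-$\infty$-groupoid, the map $B \to S$ is a left fibration, so its fibers $B_{[s]}$ are Kan complexes. The assumption that $X \to B$ is an $S$-right fibration says exactly that each $X_{[s]} \to B_{[s]}$ is a right fibration of quasi-categories. I would then invoke the classical fact \cite[HTT 3.3.1.7]{HTT}: a right fibration whose target is a Kan complex is automatically a Kan fibration. Applying this to $X_{[s]} \to B_{[s]}$ makes it a Kan fibration between simplicial sets, and since the base is a Kan complex, so is the total space $X_{[s]}$.

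The only real obstacle is terminological rather than mathematical: one has to unwind the definitions of $S$-fibration, $S$-right fibration, and $S$-$\infty$-groupoid from \cite{Expose1, Expose2} in order to see that the problem genuinely reduces to the two standard HTT statements above. Once that unpacking is in place, the conclusion is immediate: $X \to S$ is a coCartesian fibration with $\infty$-groupoid fibers, hence a left fibration, hence exhibits $X$ as an $S$-$\infty$-groupoid.
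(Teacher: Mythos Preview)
Your proposal is correct and follows essentially the same route as the paper's proof: reduce via \cite[2.4.2.4]{HTT} to checking that each fiber $X_{[s]}$ is a Kan complex, then use that $B_{[s]}$ is a Kan complex (since $B$ is an $S$-$\infty$-groupoid) together with the fact that a right fibration over a Kan complex has Kan total space. The only cosmetic difference is that you make the last step explicit by invoking a Kan-fibration criterion, whereas the paper states it in one line.
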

\begin{proof}
  We have to show that the coCartesian fibration $X\fib S$ is a left fibration.
  By \cite[prop. 2.4.2.4]{HTT} it is enough to show that each fiber $X_{[s]}$ is a Kan complex.
  \cite[prop. 2.4.2.4]{HTT} also guarantees that $B_{[s]}$ is a Kan complex.
  Since $X_{[s]} \to B_{[s]}$ is a right fibration over a Kan complex, we deduce that $X_{[s]}$ is indeed a Kan complex.
\end{proof}

\begin{cor} \label{parametrized_st_unst_over_gpd}
    Let $\ul{B} \to S$ be an $S$-space (a left fibration over S). 
  There is a natural equivalence of $S$-categories 
  \[
    Y \colon \ul{\operatorname{Psh}}_S(\ul{B}) \iso (\ul\Top_{S})_{/\ul{B}}.
  \]
  If $x \in \ul{B}_{[s]}$, then $Y$ sends the representable presheaf $j(x)$ to 
  \[
    \left( \ul{B}_{/\ul{x}} \fib \ul{B} \times_S \ul{s} \right) \in \left(\Top_{\ul{s}}\right)_{/\ul{B} \times_S \ul{s}} \simeq \left( (\ul\Top_S)_{/\ul{B}} \right)_{[s]}.
  \]
\end{cor}

\subsection{Parametrized preserves and Day convolution}
\label{sec:param-pres-day}
Let $\ul \C$ be a $G$-$\infty$-category. If $\ul\C$ has a $G$-symmetric monoidal structure $\ul\C^\otimes$, then $\Psh(\ul\C)$ has a $G$-symmetric monoidal structure \( \Psh(\ul\C)^\otimes \to \GFin_* \) given by the $G$-Day convolution of \cite[sec. 6]{Parametrized_algebra} with respect to $G$-symmetric monoidal structure on $\ul\C$ and the Cartesian $G$-symmetric monoidal structure on $\ulTopG$.
Our goal in this subsection is \cref{prop:GSM_G_LKE}; informally, it states that parametrized left Kan extension along the Yoneda embedding $j:\ul\C \to \Psh(\ul\C)$ takes a $G$-symmetric monoidal functor from $\ul\C$ to a $G$-symmetric monoidal functor from $\Psh(\ul\C)$.

We will need the following statement:
\begin{lem}[{\cite[cor. 6.0.12]{Parametrized_algebra}}]
  The parametrized Yoneda embedding $j: \ul\C \to \Psh(\ul\C)$ extends to a $G$-symmetric monoidal $G$-functor $j^\otimes:\ul\C^\otimes \to \Psh(\ul\C)^\otimes$.
\end{lem}

We use the notion of a $G$-cocomplete $G$-$\infty$-category from \cite[def. 5.12]{Expose2}, 
and the notion of a distributive $G$-symmetric monoidal $G$-$\infty$-category from \cite[def. 3.2.4]{Parametrized_algebra}.
Note that the essentially unique $G$-symmetric monoidal structure of $\ul\Sp^G$ of \cite[cor. 3.28]{Nardin_thesis} is distributive by construction.

Let \( F^\otimes: \ul\C^\otimes \to \ul\E^\otimes \) be a $G$-symmetric monoidal functor, with underlying $G$-functor $F:\ul\C \to \ul\E$.
If the underlying $G$-$\infty$-category $\ul\E$ is $G$-cocomplete, then $F^\otimes$ admits a $G$-operadic left Kan extension along $j^\otimes$,
\begin{align*}
  (j^\otimes)_! F^\otimes: \Psh(\ul\C)^\otimes \to \ul\E^\otimes,
\end{align*}
constructed in \cite[sec. 4.3]{Parametrized_algebra}.
\begin{prop}[{\cite[prop. 4.3.3]{Parametrized_algebra}}]
  Let $F : \ul\C^\otimes \to \ul\E^\otimes, p^\otimes: \ul\C^\otimes \to \ul\D^\otimes $ be lax $G$-symmetric monoidal functors, and  \( (p^\otimes)_! F^\otimes: \ul\D^\otimes \to \ul\E^\otimes \) the $G$-operadic left Kan extension of $F^\otimes$ along $p^\otimes$.
  Assume that the $G$-symmetric monoidal structure of $\ul\E^\otimes$ is distributive.
  Then the underlying $G$-functor of $(p^\otimes)_! F^\otimes$ is equivalent to the $G$-left Kan extension of $F:\ul\C\to \ul\E$ along $p:\ul\C \to \ul\D$.
\end{prop}
If follows that if the $G$-symmetric monoidal structure of $\ul\E^\otimes$ is
distributive, then the $G$-functor $j_!F: \Psh(\ul\C) \to \ul\E$ can be extended
to a lax $G$-symmetric monoidal functor $(j_!F)^\otimes: \Psh(\ul\C)^\otimes \to
\ul\E^\otimes$ given by the $G$-operadic left Kan extension $(j^\otimes)_!
F^\otimes$.
We show that $(j^\otimes)_! F^\otimes$ is in fact $G$-symmetric
monoidal.
\begin{prop} \label{prop:GSM_G_LKE}
  Let \( F^\otimes: \ul\C^\otimes \to \ul\E^\otimes \) be a $G$-symmetric monoidal functor from a small $G$-symmetric monoidal $\infty$-category $\ul\C^\otimes$ to a distributive $G$-symmetric monoidal $G$-$\infty$-category $\ul\E^\otimes$, with $G$-cocomplete underlying $G$-$\infty$-category $\ul\E$.
  Then $(j_!F)^\otimes: \Psh(\ul\C)^\otimes \to \ul\E^\otimes$ is a $G$-symmetric monoidal functor.
\end{prop}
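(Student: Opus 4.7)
The plan is to upgrade the lax $G$-symmetric monoidal functor $(j_!F)^{\otimes}$ constructed in the preceding discussion to a strong one by a standard \emph{reduction to representables}: the relevant comparison maps will be shown to be equivalences by checking them on representables, then extending to all of $\Psh(\ul\C)$ using $G$-colimit-preservation on both sides.

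To make this precise, recall that a lax $G$-symmetric monoidal functor is strong exactly when, for each active morphism $\alpha\colon I \to J$ in $\GFin_*$ (equivalently, for each indexed norm $\otimes_\varphi$ associated to $\varphi\colon G/K \to G/H$), the natural transformation
\[
  \otimes_\alpha^{\ul\E} \circ (j_!F)^{\otimes}_I \longrightarrow (j_!F)^{\otimes}_J \circ \otimes_\alpha^{\Psh(\ul\C)}
\]
supplied by the lax structure is an equivalence. I would verify this via a colimit-reduction. By construction, the parametrized Day convolution $\otimes_\alpha^{\Psh(\ul\C)}$ is itself a $G$-left Kan extension along $j^{\otimes}$, and so preserves $G$-colimits separately in each variable. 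Distributivity of $\ul\E^{\otimes}$ is precisely the statement that each $\otimes_\alpha^{\ul\E}$ preserves $G$-colimits separately in each variable, and $j_!F\colon \Psh(\ul\C) \to \ul\E$ preserves $G$-colimits as a parametrized left adjoint. Therefore both sides of the displayed comparison, viewed as multi-variable $G$-functors into $\ul\E$, preserve $G$-colimits separately in each input.

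On a tuple of representables $(j(c_i))$, the preceding lemma states that $j^{\otimes}$ is $G$-symmetric monoidal, while $(j_!F)^{\otimes}\circ j^{\otimes} \simeq F^{\otimes}$ by the defining property of a $G$-operadic left Kan extension; under these identifications the comparison becomes the strong $G$-symmetric monoidality datum of $F^{\otimes}$, which is an equivalence by hypothesis. Since every object of $\Psh(\ul\C)$ is a $G$-colimit of representables (via the parametrized Yoneda/density theorem of \cite[thm. 10.5]{Expose1}), the comparison is an equivalence on every tuple of presheaves, and so $(j_!F)^{\otimes}$ is strong.

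The main obstacle will be the bookkeeping required to pass between the several equivalent formulations of what it means for a lax $G$-symmetric monoidal functor to be strong: coCartesian-edge-preservation over $\GFin_*$ on the one hand, and the family of indexed-norm comparison maps above on the other. In particular one must verify that the parametrized Day convolution preserves $G$-colimits in each \emph{indexed} variable, not merely in each ordinary variable after restricting to a fiber, which requires carefully unwinding the Day convolution construction of \cite{Parametrized_algebra}; this is the genuinely parametrized input into an otherwise familiar argument.
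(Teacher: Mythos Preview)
Your proposal is correct and follows essentially the same strategy as the paper: both reduce the lax structure map to the case of representable presheaves by exploiting that (i) $j_!F$ strongly preserves $G$-colimits, (ii) the $G$-symmetric monoidal structures on $\Psh(\ul\C)$ and $\ul\E$ are distributive, and (iii) on representables the comparison becomes the strong monoidality of $F^\otimes$ via $j_!F \circ j \simeq F$. The paper makes the colimit-commutation step explicit via a pair of commutative diagrams and invokes a dedicated lemma (\cref{U_Psh_as_reps_colim}) expressing any $X \in \Psh(\ul\C)^\otimes_{<I>}$ as a $\ul{U}$-colimit of representables, whereas you phrase the same content as ``both sides preserve $G$-colimits separately in each indexed variable''; these are the same argument at different levels of detail.
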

In the course of the proof we use the following notation.
\begin{notation} \label{notation:GSM_parametrized_fiber}
  For a coCartesian fibration  $\ul\C^\otimes \fib \GFin_*$ and
  $I=(U\to G/H) \in \GFin_*$,
  \begin{enumerate}
    \item Recall that $\ul\C^\otimes_I$ is the fiber of $\ul\C^\otimes \to \GFin_*$ over $I$.
    \item Let $\ul\C^\otimes_{<I>}$ denote the $\ul{G/H}$-category constructed by
      pulling back along $\sigma_{<I>} \colon \ul{G/H} \to \GFin_*$ as in  \cite[def. B.0.4]{GFH}.
  \end{enumerate}
\end{notation}
\begin{proof}
  We have to check that the lax $G$-symmetric monoidal functor  $(j_!F)^\otimes: \Psh(\ul\C)^\otimes \to \ul\E^\otimes$ is $G$-symmetric monoidal.
  The idea of the proof is simple: reduce to the case of parametrized representable presheaves, where the claim is clear. 
  The argument is a bit convoluted due to the involved definition of a distributive $G$-symmetric monoidal structure. 
  
  Let $I\in \GFin_*, I=(U\to G/H)$.
  Parametrized Day convolution defines a distributive $G$-symmetric monoidal structure on $\Psh(\ul\C)$, so the $\ul{G/H}$-functor 
  \[
    \otimes_I : \Psh(\ul\C)^\otimes_{<I>} \simeq \prod_I \Psh(\ul\C) \ultimes \ul{U} \to \Psh(\ul\C) \ultimes \ul{G/H}
  \]
  of \cite[def. B.0.11]{GFH} is distributive (see \cite[def. 3.15]{Nardin_thesis}).
  Here $\prod_I : \ul\Cat_\infty^{\ul{U}} \to \ul\Cat_\infty^{\ul{G/H}}$ is the right adjoint of \( (- \times_{\ul{G/H}} \ul{U} ) \colon \ul\Cat_\infty^{\ul{G/H}} \to \ul\Cat_\infty^{\ul{U}} \), and the equivalence is homotopy inverse to the parametrized Segal map of \cite[rem. B.0.9]{GFH}.

  We have to show that for every $X \in \Psh(\ul\C)^\otimes_I \simeq \left( \Psh(\ul\C)^\otimes_{<I>} \right)_{[G/H]} $ the lax structure map 
  \begin{equation}
    \otimes_I (j_!F^\otimes(X)) \to j_! F ( \otimes_I X) 
    \label{lax_str_map}
  \end{equation}
  is an equivalence. 
  We first reduce to representable presheaves.
  By \cref{U_Psh_as_reps_colim} we can write $X$ as a $\ul{U}$-colimit \( X \simeq \ul{U}-colim (j \chi) \) for some $\ul{U}$-diagram \( \chi: K \to \ul\C\ultimes \ul{U} \). 
  Inspect the following diagram:
  \[
    \begin{tikzcd}
      \otimes_I ( j_!F^\otimes (X)) \ar{r} \ar[dash]{d}{\sim} & j_!F(\otimes_I (X)) \ar[dash]{d}{\sim} \\ 
      \otimes_I ( j_!F^\otimes (\ul{U}\operatorname{-\colim}(j \chi) )) \ar{r} \ar[dash,"\sim", "(1)"']{d} & j_!F( \otimes_I ( \ul{U}\operatorname{-\colim}(j \chi)) ) \ar[dash,"\sim", "(2)"']{d} \\
      \otimes_I ( \ul{U}\operatorname{-\colim}(j_!F^\otimes (j \chi) )) \ar[dash,"\sim", "(2)"']{d} & j_!F( \ul{G/H}\operatorname{-\colim}(\otimes_I ( j \chi)) ) \ar[dash,"\sim", "(1)"']{d} \\ 
      \ul{G/H}\operatorname{-\colim}(\otimes_I (j_!F^\otimes (j \chi) )) \ar{r} & \ul{G/H}\operatorname{-\colim}( j_!F^\otimes ( \otimes_I ( j \chi)) ).
    \end{tikzcd}
  \]
  The commutativity of the diagram follows from the naturality of the lax structure map \eqref{lax_str_map}.
  The equivalences marked (1) follow from the fact that $j_!F$ strongly preserves $G$-colimits, and the equivalences marked (2) follow from the distributivity of $G$-Day convolution.

  By naturality of the lax structure map \eqref{lax_str_map} it is therefore enough to show that the lax structure map \( \otimes_I ( j_!F^\otimes (j^\otimes X) ) \to j_!F(\otimes_I j^\otimes X) \) is an equivalence for $X\in \ul\C^\otimes_I$.
  This follows from inspecting the following diagram
  \[
    \begin{tikzcd}
      & \otimes_I j_! F(j^\otimes X) \ar["(1)"']{dl} \ar{d}{(3)}  & \ar["\sim"']{l} \otimes_I F^\otimes (X) \ar{d}{(4)} \\
      j_!F( \otimes_I (j^\otimes X)) \ar{r}{(2)} & j_! F( j (\otimes_I X)) & \ar["\sim"']{l} F (\otimes_I X) ,
    \end{tikzcd}
  \]
  as we now explain.
  We wish to show that the diagonal map marked (1) is an equivalence. 
  Since the parametrized Yoneda embedding $j$ is $G$-symmetric monoidal, its lax structure map  
  \( \otimes_I (j^\otimes X) \to  j (\otimes_I X) \)
  is an equivalence. 
  It follows that it is still an equivalence after applying $j_!F$, showing that the map (2) is also an equivalence.
  Therefore it is enough to show that the map marked (3) is an equivalence.
  Note that the map marked (3) is the lax structure map of the composition $j_!F^\otimes \circ j^\otimes$.
  We now use the fact that the parametrized Yoneda embedding is fully faithful (\cite[thm. 10.4]{Expose1}) together with \cite[prop. 10.5]{Expose2} to deduce that the associated natural transformation $ F \to j_!F \circ j$ is a natural equivalence.
  It follows that the left-pointing horizontal maps in the diagram are equivalences (the square commutes by naturality).  
  Hence it is enough to prove that the map marked (4) is an equivalence, which is clear since it is the lax structure map of a $G$-symmetric monoidal functor $F^\otimes$.
\end{proof}

\subsection{Maximal $G$-$\infty$-subgroupoid and $G$-symmetric monoidal structures}
We recall the definition of the maximal $G$-$\infty$-subgroupoid of an $G$-$\infty$-category $\ul\C$, and verify that a $G$-symmetric monoidal structure on $\ul\C$ induces a $G$-symmetric monoidal on its maximal $G$-$\infty$-subgroupoid.
Recall that a $G$-$\infty$-groupoid, or a $G$-space, is a $G$-$\infty$-category \( \ul\G \fib \OGop\) in which every edge is coCartesian (\cite[def. 1.1]{Expose1}). 
By \cite[2.4.2.4]{HTT} this happens precisely when \( \ul\G \fib \OGop\) is a left fibration. 

Let $\ul\C \fib \OGop$ be a $G$-$\infty$-category. 
The \myemph{maximal $G$-subgroupoid of $\ul\C$} is the subcategory $\ul\C^\simeq \subset \ul\C$ spanned by all objects and all coCartesian edges.
By construction, $\ul\C^\simeq \subset \ul\C$ is the maximal $G$-$\infty$-subcategory (\cite[sec. 4]{Expose1}) which is a $G$-$\infty$-groupoid. 
Note that for every orbit $W\in \OGop$, the morphisms in the fiber $(\ul\C^\simeq)_{[W]}$ are coCartesian edges in $\ul\C$ over $id_W$, which by \cite[prop. 2.4.1.5]{HTT} are exactly equivalences over $id_W$. Hence we have \(  (\ul\C^\simeq)_{[W]} = (\ul\C_{[W]})^\simeq \) as subsets of $\ul\C_{[W]}$.

\begin{construction}
  Suppose $\ul\C^\otimes \fib \GFin_* $ is a $G$-symmetric monoidal $G$-$\infty$-category.
  Define $\ul\C^\otimes_{coCart} \subset \ul\C^\otimes$ as the full subcategory spanned by the coCartesian morphisms over $\GFin_*$.
\end{construction}
\begin{lem} \label{GSM_str_on_max_G_subgpd}
  The composition \( \ul\C^\otimes_{coCart} \subset \ul\C^\otimes \fib \GFin_*
  \) is a coCartesian fibration which defines a $G$-symmetric monoidal structure
  on $\ul\C^{\simeq}$, the maximal $G$-$\infty$-subgroupoid of $\ul\C$. 
\end{lem}
During the proof we use the notation $\C^\otimes_I$ for the fiber of $\C^\otimes$ over $I\in \GFin_*$, see \cref{notation:GSM_parametrized_fiber}.
\begin{proof}
  The map \( \ul\C^\otimes_{coCart} \to \GFin_* \) is a left fibration by \cite[cor. 2.4.2.5]{HTT}.
  Pulling back \( \ul\C^\otimes_{coCart} \subset \ul\C^\otimes \to \GFin_* \) over the $G$-functor $\sigma_{<G/G>} : \OGop \to \GFin_*, \, [G/H] \mapsto (G/H \xto{=} G/H) $ we see that the underlying $G$-$\infty$-category of $\ul\C^\otimes_{coCart}$ is the full $G$-subcategory of $\ul\C$ spanned by the coCartesian morphisms, i.e., the full $G$-$\infty$-subgroupoid of $\ul\C$. 
  Note that morphisms in the fiber $(\ul\C^\otimes_{coCart})_{[W]}$ are coCartesian edges in $\ul\C^\otimes$ over $id_W$, which by \cite[prop. 2.4.1.5]{HTT} are exactly equivalences over $id_W$.
  Hence we have \(  (\ul\C^\otimes_{coCart})_I = (\ul\C^\otimes_{I})^\simeq \) as subsets of $\ul\C^\otimes_I$.

  Finally, we have to show that the Segal maps of $\ul\C^\otimes_{coCart}$ are equivalences. 
  Let $I=(U\to G/H) \in (\GFin_*)_{[G/H]}$, and consider the Segal map associated to $I$.
  Since $\ul\C^\otimes$ is a $G$-symmetric monoidal $G$-$\infty$-category, the Segal map \( \ul\C^\otimes_{I} \iso \prod_{W\in \orb(U)} \ul\C_{[W]} \) is an equivalence of $\infty$-categories.
  Recall that the Segal map is defined as a product of functors \( \ul\C^\otimes_{I} \to \ul\C_{[W]} \), defined by choosing coCartesian lifts of specified inert morphisms in $\GFin_*$.
  The maximal groupoid functor preserves products, so applying it to the Segal map above produces an equivalence
  \( (\ul\C^\otimes_{I})^\simeq \iso \prod_{W\in \orb(U)} (\ul\C_{[W]})^\simeq \).
  Using the equalities \( (\ul\C^\otimes_{coCart})_{I} = (\ul\C^\otimes_{I})^\simeq \) and \( (\ul\C_{[W]})^\simeq  =(\ul\C^\simeq)_{[W]} \) we write the above equivalence as
  \(
    (\ul\C^\otimes_{coCart})_{I}  \iso  \prod_{W\in \orb(U)} (\ul\C^\simeq)_{[W]},
  \)  
  which is exactly the Segal map of $\ul\C^\otimes_{coCart}$.
\end{proof}
\begin{ex}
  The $G$-symmetric monoidal structure on $\ul\Sp^G$ induces a $G$-symmetric monoidal structure on its maximal subgroupoid $(\ul\Sp^G)^\simeq $.
\end{ex}

\subsection{$G$-symmetric monoidal categories and $G$-commutative algebras}
By  \cite[sec.\ 3.1]{Nardin_thesis}, a $G$-symmetric monoidal category is an $G$-commutative monoid in $\ul\Cat_\infty^G$.
\begin{thm}[{\cite[thm. 2.32]{Nardin_thesis}}]
  There is an equivalence of $\infty$-categories 
  \[
    \CMon_G(\ul\Cat_\infty^G) \simeq \CAlg_G((\ul\Cat_\infty^G)^\times)
  \]
  between the $\infty$-category $\CMon_G(\ul\Cat_\infty^G)$ of $G$-symmetric monoidal $G$-$\infty$-categories 
  and the $\infty$-category $\CAlg_G((\ul\Cat_\infty^G)^\times)$ of $G$-commutative algebras in $\ul\Cat_\infty^G$, with respect to the $G$-Cartesian $G$-symmetric monoidal structure.
\end{thm}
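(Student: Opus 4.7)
The plan is to mimic Lurie's non-parametrized argument (\cite[HA 2.4.1]{HA}), which identifies $\CMon(\C) \simeq \CAlg(\C^\times)$ when $\C$ has finite products, and upgrade every step to the parametrized setting. First I would unpack both sides of the claimed equivalence. By definition, a $G$-commutative monoid in $\ul\Cat^G_\infty$ is a $G$-functor $M \colon \GFin_* \to \ul\Cat^G_\infty$ satisfying the $G$-Segal condition: for each $I = (U\to G/H) \in \GFin_*$, the canonical map $M(I) \to \prod_{W\in \orb(U)} M(W_+)$ is an equivalence of fibers. A $G$-commutative algebra in $(\ul\Cat^G_\infty)^\times$ is a section $A \colon \GFin_* \to (\ul\Cat^G_\infty)^{\times,\otimes}$ of the coCartesian fibration defining the $G$-Cartesian $G$-symmetric monoidal structure that preserves inert morphisms.

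Next I would use the explicit construction of the $G$-Cartesian $G$-symmetric monoidal structure: the fiber of $(\ul\C)^{\times,\otimes}$ over $I = (U \to G/H)$ is modeled by the $\ul{G/H}$-$\infty$-category of sections of $\ul\C$ over the finite $G$-set $U$, with tensor product given by parametrized product (norm). Since $\ul\Cat^G_\infty$ admits all parametrized indexed products (this is where we need the existence of norms, which is standard for $\ul\Cat^G_\infty$), the fiber $\bigl((\ul\Cat^G_\infty)^{\times,\otimes}\bigr)_I$ is canonically equivalent to $\prod_{W \in \orb(U)} \ul\Cat^G_\infty|_{G/H}$ evaluated appropriately, matching the Segal product appearing in the definition of $\CMon_G$.

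Given this, I would construct the equivalence in both directions. From a $G$-commutative algebra $A$, the inert-preservation condition forces the value $A(I)$ to be equivalent to the corresponding Segal product, so the underlying family $W_+ \mapsto A(W_+)$ assembles into a $G$-functor $M \colon \GFin_* \to \ul\Cat^G_\infty$ satisfying the $G$-Segal condition. Conversely, from a $G$-commutative monoid $M$, the indexed-product description of $(\ul\Cat^G_\infty)^{\times,\otimes}$ allows one to lift $M$ to an inert-preserving section $A$. Formally, I would package this as a comparison functor
\[
  \CAlg_G\bigl((\ul\Cat^G_\infty)^\times\bigr) \to \CMon_G(\ul\Cat^G_\infty)
\]
given by restriction along a suitable inclusion of active-inert data, and show it is an equivalence by checking fully-faithfulness and essential surjectivity fiberwise over each orbit $G/H$, reducing to the analogous non-parametrized statement for the $\infty$-category of $H$-$\infty$-categories.

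The main obstacle is the bookkeeping around the $G$-Cartesian structure: one needs to verify that the explicit construction of $(\ul\Cat^G_\infty)^{\times,\otimes}$ via indexed products genuinely satisfies the universal property that inert-preserving sections equal Segal functors. In the non-parametrized case this is essentially tautological from the definition of the Cartesian structure; in the parametrized case one must invoke the parametrized straightening/unstraightening equivalence (\cref{parametrized_st_unst}) together with the fact that indexed products in $\ul\Cat^G_\infty$ are computed as Cartesian products of the fibers with the appropriate equivariance data. Once this identification is in place, the rest of the argument is a direct categorification of Lurie's proof.
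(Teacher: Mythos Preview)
The paper does not prove this theorem at all: it is stated with the citation \cite[thm.~2.32]{Nardin_thesis} and invoked as a black box, with no argument given. So there is no ``paper's own proof'' to compare your proposal against.

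Your sketch is a reasonable outline of the parametrized analogue of Lurie's \cite[\S 2.4.1]{HA}, and that is indeed essentially how Nardin proves it in his thesis. The main point you correctly identify is that the $G$-Cartesian structure $(\ul\Cat_\infty^G)^\times$ is built so that its fibers over $I=(U\to G/H)$ are indexed products, and hence inert-preserving sections are forced to satisfy the $G$-Segal condition. What you call ``bookkeeping'' is the actual content of the proof, but since the paper simply cites the result there is nothing more to say here.
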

The equivalence of \cite[thm. 2.32]{Nardin_thesis} restricts to give the following:
  \begin{cor}
    There is an equivalence of $\infty$-categories
  \[
    \CMon_G(\ulTopG) \simeq \CAlg_G((\ulTopG)^\times)
  \]
  between the $\infty$-category $\CMon_G(\ulTopG)$ of $G$-symmetric monoidal $G$-$\infty$-groupoids 
  and the $\infty$-category $\CAlg_G((\ulTopG)^\times)$ of $G$-commutative algebras in $\ulTopG$. 
\end{cor}
\begin{ex} \label{G_Picard_as_G_comm_alg}
  The Picard $G$-space $\Pic(\ul\C)$ admits $G$-symmetric monoidal structure (see \cref{G_Picard}), and therefore defines a $G$-commutative monoid in $\ulTopG$.
  Applying the result above we can consider $\Pic(\ul\C)$  as a $G$-commutative algebra in $(\ulTopG)^\times$.
\end{ex}

\subsection{Slicing $G$-symmetric monoidal categories} \label{G_slice_GSM_str}
\begin{mydef} \label{def:G_slice}
  Let $\ul\C^\otimes$ be a $G$-symmetric monoidal $G$-$\infty$-category and $A: \GFin_* \to \ul\C^\otimes$ a $G$-commutative algebra. 
  Define $\ul\C^\otimes_{/\ul{A}} \to \GFin_*$ by applying the construction \cite[def. 2.2.2.1]{HA} for $K=\Delta^0, \, S=\GFin_*$ and $S \times K \to S$ the identity map. 
\end{mydef}
The following statement is a result of \cite[sec. 2.2.2]{HA} together with the $G$-Segal conditions of a $G$-symmetric monoidal $\infty$-category.
\begin{prop} 
  The map \( \ul\C^\otimes_{/\ul{A}} \to \GFin_* \) defines a $G$-symmetric monoidal
  $G$-$\infty$-category, with underlying $G$-$\infty$-category equivalent to the
  parametrized slice $\ul\C_{/\ul{A}}\to \OGop$ of \cite[not. 4.29]{Expose2}.
\end{prop}
We will use the following description of $\mathcal{O}$-algebras in $\ul\C^\otimes_{/\ul{A}}$.
\begin{prop} \label{O_algs_in_G_slice}
  Let $\mathcal{O}^\otimes \to \GFin_*$ be a $G$-$\infty$-operad. 
  The $\infty$-category \( \Alg_{\mathcal{O}}(\ul\C^\otimes_{/\ul{A}}) \) of
  $\mathcal{O}$-algebras in $\ul\C^{\otimes}_{/\ul{A}}$ is equivalent to the slice $\infty$-category
  \(\Alg_{\mathcal{O}}(\ul\C)_{/A}\) of $\mathcal{O}$-algebras over $A$.
\end{prop}

\begin{proof}
  The proof follows from unraveling the definitions and is skipped.
\end{proof}

\subsection{Parametrized symmetric monoidal straightening/unstraightening}

The following theorem is known to the experts, although its proof is
  not in the literature. A proof of symmetric monoidal
  straightening-unstraightening (but not parametrized) can be found in
  \cite{Ramzi}. We used the following theorem in \cref{thm:G_Thom_as_GSM_functor} to
  extend the Thom spectrum $G$-functor $\Th$ to a $G$-symmetric monoidal functor.
\begin{thm}(Folklore) \label{GSM_straightening_unstraightening}
  Suppose $ B^\otimes \fib \ulFin^S_*$ is an $S$-symmetric monoidal $S$-$\infty$-groupoid.
  Then the natural equivalence of \cref{parametrized_st_unst_over_gpd} extends to an $S$-symmetric monoidal equivalence 
  \[
    \ul{\operatorname{Psh}}_S(B)^\otimes \iso (\ul\Top_{S})_{/\ul{B}}^\otimes ,    
  \]
  where the $S$-symmetric monoidal structure on the right hand side is given by \cref{G_slice_GSM_str}
  and the $S$-symmetric monoidal structure on the left hand side is given by $S$-Day convolution (\cite[sec. 4.3]{Parametrized_algebra}).
\end{thm}

\subsection{Parametrized presheaves and parametrized colimits} 
In this subsection we state some properties of the parametrized presheaf
category that we used in \cref{sec:param-pres-day}.

Let $\Psh(\ul\C) = \ulFun_G(\ul\C^{vop}, \ulTopG )$ denote the parametrized presheaf $G$-$\infty$-category of a small $G$-$\infty$-category $\ul\C \to \OGop$, 
and let  $j:\ul\C \cof \Psh(\ul\C)$ be the parametrized Yoneda embedding of \cite[def. 10.2]{Expose1}.

We will construct $G$-functors out of $\Psh(\ul\C)$ using parametrized $G$-left Kan extension (see \cite[sec. 10]{Expose2} and \cite[def. 2.12]{Expose4}).
Specifically, we will use \cite[thm. 11.5]{Expose2}.
Let \( \Fun^L_G(\ul\C,\ul\D) \subseteq \Fun_G(\ul\C,\ul\D) \) denote the full subcategory of $G$-functors which strongly preserve $G$-colimits (\cite[def. 11.2]{Expose2}).
\begin{thm}[Shah]
  Let $\ul\C$ be a $G$-$\infty$-category and let $\ul\E$ be a $G$-cocomplete $G$-$\infty$-category. 
  Then restriction along the $G$-Yoneda embedding $j:\ul\C \to \Psh(\ul\C)$ defines an equivalence of $\infty$-categories 
  \begin{align*}
    \Fun^L_G(\Psh(\ul\C), \ul\E ) \iso \Fun_G(\ul\C,\ul\E)
  \end{align*}
  with inverse given by $G$-left Kan extension along $j$.
\end{thm}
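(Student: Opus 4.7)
\medskip

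The plan is to adapt the classical characterization of presheaves as the free cocompletion (see \cite[thm. 5.1.5.6]{HTT}) to the $G$-parametrized setting, using the foundations of parametrized Kan extensions from \cite{Shah_thesis} and \cite{Expose2}. The whole argument is formal once one has a workable theory of $G$-left Kan extensions along $j$ and a parametrized density statement.

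First, I would construct the putative inverse functor. Since $\ul\C$ is small and $\ul\E$ is $G$-cocomplete, pointwise $G$-left Kan extensions along $j$ exist and assemble into a functor
\[
 j_! \colon \Fun_G(\ul\C, \ul\E) \longrightarrow \Fun_G(\Psh(\ul\C), \ul\E).
\]
I would then check that $j_!$ lands in $\Fun^L_G(\Psh(\ul\C), \ul\E)$: because the representables generate $\Psh(\ul\C)$ under $G$-colimits, and $(j_!F)(X)$ is by construction a $G$-colimit over a parametrized slice $\Psh(\ul\C)_{/X} \times_{\Psh(\ul\C)} \ul\C$, the functor $j_!F$ strongly preserves $G$-colimits in its argument.

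Next, I would analyze the unit $\eta_F \colon F \to (j_!F) \circ j$. Using the pointwise formula for $G$-left Kan extension together with full faithfulness of the parametrized Yoneda embedding (\cite[thm. 10.4]{Expose1}), the relevant parametrized slice reduces to one having the identity on $c$ as a $G$-terminal object, so the $G$-colimit computes $F(c)$ and $\eta_F$ is an equivalence. For the other direction, given $H \in \Fun^L_G(\Psh(\ul\C), \ul\E)$, I would show that the canonical counit $j_!(H \circ j) \to H$ is an equivalence. By the parametrized density theorem, every $X \in \Psh(\ul\C)$ is canonically a $G$-colimit of representables $j(c_\alpha)$; since both $H$ (by hypothesis) and $j_!(H \circ j)$ (by the previous step) strongly preserve $G$-colimits, the claim reduces to the case $X = j(c)$, which is covered by the unit equivalence. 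Putting these together shows that restriction along $j$ and $j_!$ are mutually inverse equivalences.

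The main obstacle is the parametrized density theorem --- the assertion that every object of $\Psh(\ul\C)$ is canonically a $G$-colimit of representables. In the unparametrized setting this is \cite[lem. 5.1.5.3]{HTT}, proved by a coend decomposition. In the $G$-parametrized setting one needs a corresponding parametrized coend calculus, or equivalently the parametrized coYoneda lemma, whose development is a substantial part of \cite{Shah_thesis}. Once density is in place and the pointwise construction of $j_!$ is justified, the rest of the proof is a formal bookkeeping of units and counits along the familiar lines of the unparametrized universal property.
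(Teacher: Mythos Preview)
The paper does not give its own proof of this statement: it is stated as a theorem of Shah and attributed directly to \cite[thm.~11.5]{Expose2}. There is therefore no argument in the paper to compare against; your proposal is a proof sketch for a result the paper simply quotes.

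That said, your outline is the expected one and is essentially how the result is established in \cite{Expose2}: it is the parametrized analogue of \cite[thm.~5.1.5.6]{HTT}, and the ingredients you isolate---existence of pointwise $G$-left Kan extensions, full faithfulness of the parametrized Yoneda embedding, and the parametrized density/coYoneda lemma---are precisely the ones needed. One small point: your justification that $j_!F$ strongly preserves $G$-colimits (``because representables generate $\Psh(\ul\C)$ under $G$-colimits'') is not quite an argument; what one actually uses is that the pointwise formula $(j_!F)(X) \simeq \ul{G/H}\text{-}\colim_{\ul\C_{/\ul X}} F$ varies $G$-colimit-preservingly in $X$, which in turn rests on the parametrized cofinality and base-change results of \cite{Shah_thesis,Expose2}. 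You correctly flag density as the substantive input; once that and the Kan extension machinery are in hand, the rest is formal.
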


Unsurprisingly, every parametrized presheaf is equivalent to a parametrized colimit of representable presheaves. 
Before giving a formal statement we recall the relevant definition of parametrized colimits in $\Psh(\ul\C)$.
Let $G/H\in \OGop$ be an orbit and $I \to \ul{G/H}$ be a $\ul{G/H}$-category.
Keeping in mind the equivalence
\[
  \Psh(\ul\C)_{[G/H]} \simeq \Fun_G( \ul{G/H} , \Psh(\ul\C) ) \simeq \Fun_{\ul{G/H}} ( \ul{G/H} , \Psh(\ul\C) \ultimes \ul{G/H} ) ,
\]
we define a $\ul{G/H}$-functor
\[
  \Delta_I : \Psh(\ul\C)_{[G/H]} \simeq \Fun_{\ul{G/H}} (\ul{G/H}, \Psh(\ul\C) \ultimes \ul{G/H} ) \to \Fun_{\ul{G/H}} ( I, \Psh(\ul\C) \ultimes \ul{G/H} ) ,
\]
induced by precomposition with the structure map $I \to \ul{G/H}$.
By definition $\ul{G/H}$-colimits in $\Psh(\ul\C)$ along $I$-shaped diagrams are given by the left adjoint 
\[
  \ul{G/H}\operatorname{-\colim} \colon \Fun_{\ul{G/H}} ( I, \Psh(\ul\C) \ultimes \ul{G/H} ) \adj \Psh(\ul\C)_{[G/H]}  : \Delta_I.
\]
See \cite[def. 1.15]{Nardin_thesis} for details.
\begin{lem} \label{Psh_as_reps_colim}
  Let  \( X\in \Psh(\ul\C)\) be a presheaf over $G/H\in \OGop$.
  Then $X$ is equivalent to a $\ul{G/H}$-colimit of a diagram of representable presheaves
  \[
    K \xto{\chi} \ul\C \ultimes \ul{G/H} \xto{j \ultimes \ul{G/H}} \Psh(\ul\C) \ultimes \ul{G/H}
  \]
  for some $\ul{G/H}$-functor \( \chi: K \to \ul\C \ultimes \ul{G/H} \).
\end{lem}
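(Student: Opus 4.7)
The plan is to deduce this from the universal property of $\Psh(\ul{C})$ as the parametrized free $G$-cocompletion of $\ul{C}$, applied to the identity functor of $\Psh(\ul{C})$ itself. This is the parametrized analog of the classical density theorem stating that every presheaf is a colimit of representables.

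First, I would apply Shah's theorem \cite[thm. 11.5]{Expose2} with target $\ul{\mathcal{E}} = \Psh(\ul{C})$, which gives an equivalence
\[
  \Fun^L_G(\Psh(\ul{C}), \Psh(\ul{C})) \iso \Fun_G(\ul{C}, \Psh(\ul{C})).
\]
Since $\operatorname{id}_{\Psh(\ul{C})}$ lies in the left hand side and restricts along $j$ to $j$ itself, it follows that $\operatorname{id}_{\Psh(\ul{C})}$ is equivalent to the $G$-left Kan extension $j_{!} j$.

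Next, I would invoke the pointwise formula for parametrized left Kan extensions to evaluate $j_{!} j$ at a given object $X \in \Psh(\ul{C})_{[G/H]}$. The formula expresses $X \simeq (j_{!} j)(X)$ as a $\ul{G/H}$-colimit over the parametrized slice $\ul{G/H}$-category $K = (\ul{C} \ultimes \ul{G/H})_{/X}$, formed using the base-changed Yoneda embedding $j \ultimes \ul{G/H} \colon \ul{C} \ultimes \ul{G/H} \to \Psh(\ul{C}) \ultimes \ul{G/H}$ to locate $\ul{C} \ultimes \ul{G/H}$ inside the target, with the indexing diagram given by postcomposing the canonical forgetful $\ul{G/H}$-functor $\chi \colon K \to \ul{C} \ultimes \ul{G/H}$ with $j \ultimes \ul{G/H}$. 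This is precisely the presentation the lemma demands.

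The main obstacle is invoking the pointwise formula for parametrized left Kan extensions precisely: the interaction between the $G$-parametrized framework, base-change to $\ul{G/H}$, and the parametrized slice construction must be reconciled carefully. I expect this bookkeeping to be essentially formal given the treatment of parametrized left Kan extensions in \cite[def. 2.12]{Expose4} and Shah's thesis, but it is where the real care is required. Apart from this, the argument is a direct transcription of the classical density theorem into the parametrized setting.
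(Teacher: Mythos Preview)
Your proposal is correct and follows essentially the same approach as the paper: identify $\operatorname{id}_{\Psh(\ul\C)}$ as the $G$-left Kan extension $j_! j$, then apply the pointwise formula at $X$ to exhibit $X$ as a $\ul{G/H}$-colimit of representables indexed by the parametrized comma category $\ul\C_{/\ul{X}} = \ul\C \times_{\Psh(\ul\C)} \Psh(\ul\C)_{/\ul{X}}$. The only cosmetic differences are in the citations: the paper invokes \cite[lem.~11.1]{Expose2} (the $G$-Yoneda lemma) directly for the first step rather than deducing it from \cite[thm.~11.5]{Expose2}, and it cites \cite[thm.~10.4]{Expose2} for the pointwise colimit formula rather than \cite[def.~2.12]{Expose4}.
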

\begin{proof}
  By the $G$-Yoneda lemma, \cite[lem. 11.1]{Expose2}, the identity functor \( Id:\Psh(\ul\C) \to \Psh(\ul\C) \) is a $G$-left Kan extension of $j$ along itself. 
  By \cite[thm. 10.4]{Expose2} we can express the value of this $G$-left Kan extension on $X$ as a $\ul{G/H}$-colimit
  \begin{align*}
    X = Id(X) \simeq \ul{G/H}\operatorname{-\colim} \left( \ul\C_{/\ul{X}} \to \ul\C \ultimes \ul{G/H} \xto{j \ultimes \ul{G/H}} \Psh(\ul\C) \ultimes \ul{G/H} \right) ,
  \end{align*}
  where $\ul\C_{/\ul{X}} = \ul\C \times_{\Psh(\ul\C)} \Psh(\ul\C)_{/\ul{X}}$ is the pullback of the $G$-slice category $\Psh(\ul\C)_{/\ul{X}}$ (\cite[not. 4.29]{Expose2}) along the $G$-Yoneda embedding $j$.
\end{proof}
\begin{cor} \label{U_Psh_as_reps_colim}
  Let $U$ be a finite $G$-set and let \( X : \ul{U} \to \Psh(\ul\C) \ultimes \ul{U} \) be $\ul{U}$-functor.
  Then there exists a $\ul{U}$-functor \( \chi: K \to \ul\C \ultimes \ul{U} \), such that the $\ul{U}$-colimit of 
  \[
    K \xto{\chi} \ul\C \ultimes \ul{U} \xto{j \ultimes \ul{U}} \Psh(\ul\C) \ultimes \ul{U}
  \]
  is equivalent to $X$.
\end{cor}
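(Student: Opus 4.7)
The plan is to reduce to \cref{Psh_as_reps_colim} via the orbit decomposition of $U$. Let me sketch how this goes.

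First, I would decompose $U$ into its $G$-orbits, $U = \bigsqcup_i G/H_i$. This yields a corresponding decomposition $\ul{U} \simeq \bigsqcup_i \ul{G/H_i}$ in $G$-$\infty$-categories, and hence an equivalence
\[
  \Fun_G(\ul{U}, \Psh(\ul\C)) \simeq \prod_i \Fun_G(\ul{G/H_i}, \Psh(\ul\C)) \simeq \prod_i \Psh(\ul\C)_{[G/H_i]}.
\]
Under the equivalence $\Fun_{\ul{U}}(\ul{U}, \Psh(\ul\C) \ultimes \ul{U}) \simeq \Fun_G(\ul{U}, \Psh(\ul\C))$, the $\ul{U}$-functor $X$ corresponds to a tuple $(X_i)$ with $X_i \in \Psh(\ul\C)_{[G/H_i]}$.

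Next, I would apply \cref{Psh_as_reps_colim} to each $X_i$ separately, obtaining $\ul{G/H_i}$-functors $\chi_i \colon K_i \to \ul\C \ultimes \ul{G/H_i}$ such that $X_i$ is the $\ul{G/H_i}$-colimit of $(j \ultimes \ul{G/H_i}) \circ \chi_i$. I would then assemble these into a single $\ul{U}$-functor by setting $K := \bigsqcup_i K_i$, equipped with the structure map $K_i \to \ul{G/H_i} \hookrightarrow \ul{U}$, and defining $\chi \colon K \to \ul\C \ultimes \ul{U}$ orbit-by-orbit via the identifications $(\ul\C \ultimes \ul{U}) \times_{\ul{U}} \ul{G/H_i} \simeq \ul\C \ultimes \ul{G/H_i}$.

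Finally, I would verify that the $\ul{U}$-colimit of $K \xto{\chi} \ul\C \ultimes \ul{U} \xto{j \ultimes \ul{U}} \Psh(\ul\C) \ultimes \ul{U}$ corresponds, under the equivalence $\Fun_{\ul{U}}(\ul{U}, \Psh(\ul\C) \ultimes \ul{U}) \simeq \prod_i \Psh(\ul\C)_{[G/H_i]}$, to the tuple $(X_i)$. This follows from the fact that parametrized colimits in a disjoint union of $G$-$\infty$-categories are computed componentwise, together with the observation that pulling back the $\ul{U}$-colimit to the $i$-th orbit recovers the $\ul{G/H_i}$-colimit of $(j \ultimes \ul{G/H_i}) \circ \chi_i \simeq X_i$.

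The only step requiring any care is the last, namely, identifying $\ul{U}$-colimits along a disjoint union of indexing $G$-$\infty$-categories with the corresponding tuple of $\ul{G/H_i}$-colimits; this should be routine from the left adjoint characterization of parametrized colimits given before \cref{Psh_as_reps_colim}, since the component-wise left adjoints to $\Delta_{K_i}$ assemble into the left adjoint to $\Delta_K$. With this in place the corollary follows immediately from \cref{Psh_as_reps_colim}.
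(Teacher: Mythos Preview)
Your proposal is correct and follows essentially the same approach as the paper: decompose $U$ into orbits, apply \cref{Psh_as_reps_colim} orbitwise, and reassemble using the equivalence $\prod_{W\in\orb(U)} \Cat^{\ul{W}}_\infty \simeq \Cat^{\ul{U}}_\infty$. The paper's proof is terser, simply citing this equivalence and leaving the componentwise colimit identification implicit, whereas you spell out the details.
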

\begin{proof}
  Decompose $U=\coprod_{W\in \orb(U)} W$ into orbits. 
  The result follows from \cref{Psh_as_reps_colim} and the equivalence
  \[
    \prod_W \Cat^{\ul{W}}_\infty \iso \Cat^{\ul{U}}_\infty , \quad (\ul{\C_W} \fib \ul{W})_{W\in\orb(U)} \mapsto \left( \coprod_W \ul{\C_W} \fib \coprod_W \ul{W} = \ul{U} \right),
  \]
  where coproducts and products are indexed over $W\in \orb(U)$.
\end{proof}

\section{The Real topological Hochschild homology of  $H\protect\underline{\mathbb{Z}}$ by Jeremy Hahn and Dylan Wilson}

In this appendix we explain how the results of the main body of the paper allow one to calculate the Real topological Hochschild homology of the Eilenberg--MacLane Mackey functor $\HZ$.  In particular, we deduce the following theorem, which verifies a conjecture of Dotto, Moi, Patchkoria, and Reeh \cite[p. 136]{Real_THH}.

\begin{thm} \label{app:main-thm}
There is an equivalence of $\HZ$-module spectra
$$\THR(\HZ) \simeq \HZ \oplus \bigoplus_{k \ge 2} \Sigma^{k\rho-1} H\underline{\mathbb{Z}/k}.$$
\end{thm}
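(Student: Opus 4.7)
The plan is to combine Corollary~\ref{cor-em}(2) with an odd-prime argument and assemble the answer via arithmetic fracture.

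At the prime $2$, Corollary~\ref{cor-em}(2) already provides the formula
$$\THR(\HZ_{(2)}) \simeq \HZ_{(2)} \otimes \Sigma^\infty_+ \Omega^\sigma\bigl(S^{\lambda+1}\langle\lambda+1\rangle\bigr).$$
To identify the right-hand side, I would combine the defining $C_2$-fibration
$$S^{\lambda+1}\langle\lambda+1\rangle \to S^{\lambda+1} \to K(\ul{\mathbb{Z}},\lambda+1)$$
with an equivariant James-type splitting for $\Omega^\sigma$, and run the induced equivariant Serre (or slice) spectral sequence with $\ul{\mathbb{Z}}_{(2)}$-coefficients. Matching the resulting graded pieces against the underlying B\"okstedt splitting
$$\THH(\HZ)_{(2)} \simeq \HZ_{(2)} \oplus \bigoplus_{k\ge 2} \Sigma^{2k-1}H\mathbb{Z}/k_{(2)}$$
(with $|\rho|=2$) forces the $2$-local form of the desired decomposition, provided no nontrivial $\HZ_{(2)}$-module extensions occur.

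For odd primes $p$, the computation is more direct, since $2$ is a unit in $\pi_0^{C_2}\HZ_{(p)}$ and in $\pi_\star H\mathbb{Z}/k_{(p)}$ for all $k$ relevant to the $p$-local fracture. The underlying B\"okstedt splitting of $\THH(\HZ_{(p)})$ then lifts $C_2$-equivariantly, and on each summand $\Sigma^{2k-1}H\mathbb{Z}/k_{(p)}$ the Real involution is determined (up to Mackey structure) by its underlying action; one checks this uniquely promotes the summand to $\Sigma^{k\rho-1}H\underline{\mathbb{Z}/k}_{(p)}$. Assembling the $p$-local answers via an arithmetic fracture square, and noting $\THR(\HZ)\otimes\mathbb{Q}\simeq\HZ\otimes\mathbb{Q}$ (as every torsion summand vanishes rationally), yields the claimed integral equivalence of $\HZ$-modules.

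The principal obstacle is the $2$-local step: verifying collapse of the equivariant Serre spectral sequence and ruling out nontrivial $\HZ_{(2)}$-module extensions between the shifted $H\underline{\mathbb{Z}/2^r}$ summands. This is an equivariant lift of Cartan and Serre's classical computation of $H_\ast\bigl(\Omega S^{2n+1}\langle 2n+1\rangle;\mathbb{F}_2\bigr)$ and its divided-power structure; in the equivariant setting one must track the behavior of the Mackey-functor-valued homology of $\Omega^\sigma S^{\lambda+1}\langle\lambda+1\rangle$ under $C_2$, which constitutes the technical heart of the appendix.
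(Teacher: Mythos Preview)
Your overall architecture matches the paper's: compute $\THR(\HZ_{(2)})$ via Corollary~\ref{cor-em}(2), combine with the odd-prime answer, and assemble by arithmetic fracture. The paper also invokes the fiber sequence $\Omega^\sigma S^{\lambda+1}\langle\lambda+1\rangle \to \Omega^\sigma S^{\lambda+1} \to \Omega^\sigma K(\underline{\mathbb{Z}},\lambda+1)$ and the James splitting $\Sigma^\infty_+\Omega^\sigma S^{\lambda+1}\simeq\bigoplus_{k\ge0}\mathbb{S}^{k\rho}$, so you have identified the right ingredients.

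Where the paper diverges from your plan is precisely at the $2$-local step you flag as the principal obstacle. Rather than running an equivariant Serre or slice spectral sequence and then wrestling with collapse and extension problems, the paper observes that $\Omega^\sigma K(\underline{\mathbb{Z}},\lambda+1)\simeq K(\underline{\mathbb{Z}},\sigma+1)\simeq\mathbb{CP}^\infty_{\mathbb{R}}$, so the \emph{cofiber} of $\Omega^\sigma S^{\lambda+1}\langle\lambda+1\rangle \to \Omega^\sigma S^{\lambda+1}$ is the Thom space of a Real line bundle $\mathcal{L}$ over $\Omega^\sigma S^{\lambda+1}$. Since $\HZ$ is Real oriented, the Thom isomorphism gives $\HZ\otimes(\Omega^\sigma S^{\lambda+1})^{\mathcal{L}}\simeq\Sigma^\rho\HZ\otimes\Sigma^\infty_+\Omega^\sigma S^{\lambda+1}$. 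Combined with the James splitting, this produces a cofiber sequence of $\HZ$-modules
\[
\HZ\otimes\Sigma^\infty_+\Omega^\sigma S^{\lambda+1}\langle\lambda+1\rangle \;\longrightarrow\; \bigoplus_{k\ge0}\Sigma^{k\rho}\HZ \;\longrightarrow\; \bigoplus_{s\ge1}\Sigma^{s\rho}\HZ.
\]
After $2$-localization the right-hand map $f$ is between sums of free $\HZ_{(2)}$-modules in degrees $k\rho$. A glance at the $RO(C_2)$-graded homotopy of $\HZ$ shows $\pi_{k\rho}(\Sigma^{s\rho}\HZ_{(2)})=0$ unless $k=s$, and in that case the class is detected on underlying homotopy. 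Hence $f$ is entirely determined by its underlying nonequivariant map, which is fixed by the classical computation of $\THH(H\mathbb{Z}_{(2)})$. This Thom-isomorphism trick bypasses spectral sequences altogether and dissolves the extension problems you anticipated; it is the key idea missing from your sketch.

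At odd primes the paper does not attempt the direct lifting argument you outline (which, as stated, is too vague: inverting $2$ does not by itself force a nonequivariant splitting to lift to a Mackey-functor-valued one). Instead it simply cites the prior computation of Dotto--Moi--Patchkoria--Reeh, who already established the formula for $\THR(\HZ)[\tfrac{1}{2}]$.
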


Dotto, Moi, Patchkoria, and Reeh were able to prove that Theorem
\ref{app:main-thm} holds after localization at any odd prime, and so also after
localization away from $2$ \cite[Theorem 5.24 \& Corollary 5.25]{Real_THH}. 
However, they did not have methods to calculate $\THR(\HZ)_{(2)} \simeq \THR(\HZ_{(2)})$.  On the other hand, the main body of this paper provides methods to calculate the $\THR$ of Thom spectra, and the authors of this appendix previously constructed $\HZ_{(2)}$ as a $C_2$-equivariant Thom spectrum in \cite{HahnWilson}.  These results were combined in Corollary \ref{cor-em}(ii) of the main body to prove that
$$\THR(\HZ_{(2)}) \simeq \HZ_{(2)} \otimes \Sigma^{\infty}_+\Omega^\sigma (S^{\lambda +1}\langle \lambda +1 \rangle).$$
The main contribution of the appendix is to observe that this can be made more explicit:

\begin{lem} \label{app:main-lem}
There is an equivalence of $\HZ_{(2)}$-module spectra
$$\HZ_{(2)} \otimes \Sigma^{\infty}_+ \Omega^\sigma (S^{\lambda +1}\langle \lambda +1 \rangle) \simeq \HZ_{(2)} \oplus \bigoplus_{k \ge 1} \Sigma^{k\rho-1} H\underline{\mathbb{Z}/k}_{(2)}.$$
\end{lem}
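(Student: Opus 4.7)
The plan is to prove \cref{app:main-lem} by computing $H^{C_2}_\star(\Omega^\sigma(S^{\lambda+1}\langle\lambda+1\rangle); \underline{\mathbb{Z}}_{(2)})$ via an $RO(C_2)$-graded Serre spectral sequence, equivariantizing the classical computation $H_*(\Omega S^3\langle 3\rangle; \mathbb{Z}) \cong \mathbb{Z} \oplus \bigoplus_{k\ge 2}\Sigma^{2k-1}\mathbb{Z}/k$. That classical result follows from the Serre spectral sequence for $S^1 \to \Omega S^3\langle 3\rangle \to \Omega S^3$, in which the transgression sends $u \in H^1(S^1; \mathbb{Z})$ to the divided-power generator $x \in H^2(\Omega S^3;\mathbb{Z}) = \Gamma_\mathbb{Z}[x]$ and multiplicativity forces $d_2(x^{(k)}u) = (k+1)x^{(k+1)}$, leaving $\mathbb{Z}/k$-torsion in cohomological degree $2k$ for each $k\ge 2$.

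Equivariantly, use the canonical decomposition $\lambda + 1 \cong \sigma + \rho$ and loop the extended fiber sequence $K(\underline{\mathbb{Z}}, \lambda) \to S^{\lambda+1}\langle\lambda+1\rangle \to S^{\lambda+1}$ by $\Omega^\sigma$, using $\Omega^\sigma K(\underline{\mathbb{Z}}, \lambda) \simeq K(\underline{\mathbb{Z}}, \sigma)$, to obtain the principal $C_2$-fibration
\begin{equation*}
K(\underline{\mathbb{Z}}, \sigma) \longrightarrow \Omega^\sigma\bigl(S^{\lambda+1}\langle\lambda+1\rangle\bigr) \longrightarrow \Omega^\sigma \Sigma^\sigma S^\rho.
\end{equation*}
Since $S^\rho$ is $C_2$-connected, an equivariant James splitting identifies $H^\star_{C_2}(\Omega^\sigma \Sigma^\sigma S^\rho; \underline{\mathbb{Z}}_{(2)})$ as the divided-power algebra $\Gamma[x]$ on a class $x$ of $RO(C_2)$-cohomological degree $\rho$ pulled back from the fundamental class of $S^\rho$. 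The fiber's fundamental class $u \in H^\sigma_{C_2}(K(\underline{\mathbb{Z}}, \sigma); \underline{\mathbb{Z}})$ transgresses to $x$, and multiplicativity forces $d(x^{(k)}u) = \pm(k+1)x^{(k+1)}$, leaving on the $E_\infty$-page the unit plus a $\mathbb{Z}/k$-torsion Mackey functor class of $RO(C_2)$-cohomological degree $k\rho$ for each $k\ge 2$.

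To assemble the $\HZ_{(2)}$-module splitting, for each $k \ge 2$ realize the torsion class as an $\HZ_{(2)}$-module map $\Sigma^{k\rho-1} H\underline{\mathbb{Z}/k}_{(2)} \to \HZ_{(2)}\otimes \Sigma^\infty_+\Omega^\sigma(S^{\lambda+1}\langle\lambda+1\rangle)$ constructed by pairing $x^{(k)}$ with its integral Bockstein. Taking the direct sum of these maps with the unit $\HZ_{(2)} \to \HZ_{(2)}\otimes\Sigma^\infty_+\Omega^\sigma(S^{\lambda+1}\langle\lambda+1\rangle)$ and verifying, via the cohomology computation and the $RO(C_2)$-graded universal coefficient theorem, that it induces an isomorphism on Mackey functor homotopy groups produces the desired equivalence of $\HZ_{(2)}$-modules.

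The main obstacle is justifying the equivariant James splitting and the multiplicative transgression in the $RO(C_2)$-graded Serre spectral sequence with Mackey functor coefficients: one must verify that each divided-power class $x^{(k)}$ lives in the pure $RO(C_2)$-degree $k\rho$ rather than a mixed grading, and that no further classes in $H^\star_{C_2}(K(\underline{\mathbb{Z}}, \sigma); \underline{\mathbb{Z}}_{(2)})$ contribute additional differentials or extensions. These refinements should follow by comparison with the restriction functor, where the computation becomes the classical Serre argument, and with the $C_2$-geometric fixed point functor, where the fiber's fixed points form a discrete $\mathbb{Z}$ and the total space's fixed points are contractible, forcing the spectral sequence to collapse appropriately.
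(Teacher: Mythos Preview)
Your outline takes a genuinely different route from the paper's. You work with the principal fibration $K(\underline{\mathbb{Z}},\sigma) \to \Omega^\sigma(S^{\lambda+1}\langle\lambda+1\rangle) \to \Omega^\sigma S^{\lambda+1}$ and propose running an $RO(C_2)$-graded Serre spectral sequence with multiplicative transgression, mirroring the classical $\Omega S^3\langle 3\rangle$ argument. The paper instead rotates the fibration one step, viewing $\Omega^\sigma(S^{\lambda+1}\langle\lambda+1\rangle) \to \Omega^\sigma S^{\lambda+1} \to \mathbb{CP}^\infty_{\mathbb{R}}$, and observes that the cofiber of the first map is the Thom space of the Real line bundle over $\Omega^\sigma S^{\lambda+1}$ classified by the second map. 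Since $\HZ$ is Real-oriented, smashing with $\HZ$ and applying the equivariant James splitting $\Sigma^\infty_+\Omega^\sigma\Sigma^\sigma S^\rho \simeq \bigoplus_{k\ge 0} S^{k\rho}$ produces a cofiber sequence of $\HZ$-modules
\[
\HZ \otimes \Sigma^{\infty}_+ \Omega^{\sigma} S^{\lambda+1}\langle\lambda+1\rangle \longrightarrow \bigoplus_{k \ge 0} \Sigma^{k\rho} \HZ \longrightarrow \bigoplus_{s \ge 1} \Sigma^{s \rho} \HZ.
\]
After $2$-localization the right-hand map is determined by classes in $\pi_{k\rho}\bigl(\Sigma^{s\rho}\HZ_{(2)}\bigr)$, and a direct lookup in the known $RO(C_2)$-graded coefficients of $\HZ$ shows these vanish unless $k=s$ and are then detected on underlying spectra. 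The entire equivariant computation therefore reduces in one step to the classical $\THH(H\mathbb{Z}_{(2)})$.

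What the paper's approach buys is exactly the avoidance of the obstacles you flag. It needs no multiplicative structure on an $RO(C_2)$-graded Serre spectral sequence, no computation of $H^\star_{C_2}(K(\underline{\mathbb{Z}},\sigma); \underline{\mathbb{Z}})$ (which is not simply an exterior algebra on one class in degree~$\sigma$, since the $C_2$-fixed points of $K(\underline{\mathbb{Z}},\sigma)$ are not just $S^0$), and no separate realization/assembly step at the end. Your route is plausible in outline but would require developing precisely the machinery you yourself identify as the main obstacle; the paper's Thom-isomorphism-plus-sparsity argument sidesteps all of it.
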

We deduce Lemma \ref{app:main-lem} from the non-equivariant calculation of $\mathrm{THH}(H\mathbb{Z})$ together with the following $C_2$-equivariant fact, which we prove before $2$-localization:
\begin{lem} \label{app:red-lem}
There is a cofiber sequence of $\HZ$-module spectra
$$\HZ \otimes \Sigma^{\infty}_+ \Omega^{\sigma} S^{\lambda+1} \langle \lambda+1 \rangle \to \bigoplus_{k \ge 0} \Sigma^{k\rho} \HZ \to \bigoplus_{s \ge 1} \Sigma^{s \rho} \HZ.$$
\end{lem}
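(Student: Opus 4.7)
The plan is to first identify the middle term of the cofiber sequence via the equivariant Snaith splitting, then produce the full sequence by analyzing the fiber sequence of $C_2$-spaces coming from the definition of $S^{\lambda+1}\langle\lambda+1\rangle$, and finally to verify that the resulting cofiber matches the claim by tracking generators.

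Since $\lambda+1\cong\sigma+\rho$ as $C_2$-representations, we have $S^{\lambda+1}\simeq\Sigma^\sigma S^\rho$, and hence $\Omega^\sigma S^{\lambda+1}\simeq\Omega^\sigma\Sigma^\sigma S^\rho$. The equivariant Snaith splitting (invoked in the main text just after Corollary~\ref{cor-em}) yields
\[
  \Sigma^\infty_+\Omega^\sigma\Sigma^\sigma S^\rho\;\simeq\;\bigoplus_{k\ge 0}\mathbb{S}^{k\rho},
\]
so smashing with $\HZ$ identifies the middle term with $\bigoplus_{k\ge 0}\Sigma^{k\rho}\HZ$. Applying $\Omega^\sigma$ to the defining fiber sequence of $S^{\lambda+1}\langle\lambda+1\rangle$ produces a fiber sequence of $C_2$-spaces
\[
  \Omega^\sigma S^{\lambda+1}\langle\lambda+1\rangle\to\Omega^\sigma S^{\lambda+1}\to K(\underline{\mathbb{Z}},\rho),
\]
where I use $\lambda+1-\sigma=\rho$. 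The first map in the claimed cofiber sequence arises from the fiber inclusion on the left, smashed with $\HZ$. The right-hand term $\bigoplus_{s\ge 1}\Sigma^{s\rho}\HZ$ I propose to identify with the reduced $\HZ$-homology of $K(\underline{\mathbb{Z}},\rho)$: this equivariant Eilenberg--MacLane space admits an equivariant cell structure with a single cell in each dimension $s\rho$ for $s\ge 1$, and all attaching maps are trivial in $\HZ$-homology. The second map is then the Postnikov projection composed with this identification.

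To finish, one must verify that the resulting sequence of $\HZ$-modules is a cofiber sequence by computing the fiber of the second map. Under the identifications above, the map $\bigoplus_{k\ge 0}\Sigma^{k\rho}\HZ\to\bigoplus_{s\ge 1}\Sigma^{s\rho}\HZ$ takes the $k$-th summand ($k\ge 1$) to the $s=k$ summand by multiplication by some integer $a_k$, and vanishes on $k=0$. Granted that $a_k = k$, the fiber becomes $\HZ\oplus\bigoplus_{k\ge 2}\Sigma^{k\rho-1}H\underline{\mathbb{Z}/k}$, which is the predicted form of $\HZ\otimes\Sigma^\infty_+\Omega^\sigma S^{\lambda+1}\langle\lambda+1\rangle$ consistent with Theorem~\ref{app:main-thm}.

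\textbf{Main obstacle.} The crucial and most delicate step is computing the coefficient $a_k$. A naive Serre-spectral-sequence argument (comparing the classes $x^k$ in $H_{k\rho}(\Omega^\sigma S^{\lambda+1};\HZ)$ with the divided-power classes in $H_{k\rho}(K(\underline{\mathbb{Z}},\rho);\HZ)$) produces $a_k = k!$, which would give an incorrect fiber; the correct value $a_k = k$ arises from a more refined argument using the equivariant James--Hopf construction, mirroring B\"okstedt's classical computation of $\THH(H\mathbb{Z})$. Equivariantly, one may reduce to the non-equivariant case by restriction to the trivial subgroup and then pin down any remaining ambiguity via the $C_2$-geometric fixed points (where $\Phi^{C_2}\HZ\simeq\HF_2$). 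A subsidiary obstacle is the $\HZ$-module identification $\HZ\otimes\Sigma^\infty K(\underline{\mathbb{Z}},\rho)\simeq\bigoplus_{s\ge 1}\Sigma^{s\rho}\HZ$, which requires an explicit equivariant cell decomposition of $K(\underline{\mathbb{Z}},\rho)$ together with a vanishing argument on attaching maps; again restriction and geometric fixed points reduce this to known non-equivariant facts plus an equivariant extension check.
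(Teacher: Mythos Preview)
Your proposal has a genuine gap at the most important step. You correctly produce the fiber sequence of $C_2$-spaces
\[
  \Omega^\sigma S^{\lambda+1}\langle\lambda+1\rangle \to \Omega^\sigma S^{\lambda+1} \to K(\underline{\mathbb{Z}},\rho),
\]
but a fiber sequence of spaces $F\to E\to B$ does \emph{not} in general yield a cofiber sequence $\HZ\otimes\Sigma^\infty_+ F\to\HZ\otimes\Sigma^\infty_+ E\to\HZ\otimes\Sigma^\infty B$. Your identification of the right-hand term as ``the reduced $\HZ$-homology of $K(\underline{\mathbb{Z}},\rho)$'' therefore has no a priori relationship to the actual cofiber of the first map. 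Your plan to repair this by computing the map explicitly and then computing its fiber is circular: the fiber is exactly $\THR(\HZ_{(2)})$ (after localization), which is the quantity the whole appendix is trying to determine. The computation of the integers $a_k$ that you flag as the ``main obstacle'' is not part of this lemma at all; it belongs to the subsequent lemma, where it is settled cleanly by restricting to the underlying non-equivariant spectrum and invoking the known value of $\THH(H\mathbb{Z})$.

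The paper supplies the missing idea: since $K(\underline{\mathbb{Z}},\rho)\simeq\mathbb{CP}^\infty_{\mathbb{R}}$ classifies Real line bundles, the map $\Omega^\sigma S^{\lambda+1}\to\mathbb{CP}^\infty_{\mathbb{R}}$ classifies a Real line bundle $\mathcal{L}$, and the homotopy fiber $\Omega^\sigma S^{\lambda+1}\langle\lambda+1\rangle$ is identified with the sphere bundle of $\mathcal{L}$. Hence the cofiber of the first map is the Thom space $(\Omega^\sigma S^{\lambda+1})^{\mathcal{L}}$. Because $\HZ$ is Real oriented, the Thom isomorphism gives $\HZ\otimes(\Omega^\sigma S^{\lambda+1})^{\mathcal{L}}\simeq\Sigma^\rho\HZ\otimes\Sigma^\infty_+\Omega^\sigma S^{\lambda+1}$, and applying the same James splitting to this term yields $\bigoplus_{s\ge 1}\Sigma^{s\rho}\HZ$. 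This produces the cofiber sequence directly, with no need to compute the map or know anything about $\THR(\HZ)$ in advance.
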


\begin{proof}[Proof of Lemma \ref{app:red-lem}]
Applying $\Omega^{\sigma}$ to the definition of $S^{\lambda+1} \langle \lambda+1\rangle$ yields a fiber sequence of $C_2$-equivariant spaces
$$\Omega^{\sigma} S^{\lambda+1} \langle \lambda+1 \rangle \to \Omega^{\sigma} S^{\lambda+1} \to \Omega^{\sigma} \mathrm{K}(\lambda+1,\underline{\mathbb{Z}}),$$
where $\Omega^{\sigma} \mathrm{K}(\lambda+1,\underline{\mathbb{Z}}) = \Omega^{\sigma} \mathrm{K}(2\sigma+1,\underline{\mathbb{Z}}) \simeq \mathrm{K}(\sigma+1,\underline{\mathbb{Z}}) \simeq \mathbb{CP}^{\infty}_{\mathbb{R}}.$

In particular, since $\mathbb{CP}^{\infty}_{\mathbb{R}}$ classifies Real line bundles, it follows that the \emph{cofiber} of the map $\Omega^{\sigma} S^{\lambda+1} \langle \lambda+1\rangle \to \Omega^{\sigma} S^{\lambda+1}$ is the Thom space of a Real line bundle $\mathcal{L}$ over $\Omega^{\sigma} S^{\lambda+1}$.  Using the fact that $\HZ$ is Real oriented, we conclude that there is a cofiber sequence of $\HZ$-modules
$$\HZ \otimes \Sigma^{\infty}_+ \Omega^{\sigma} S^{\lambda+1} \langle \lambda+1 \rangle \to \HZ \otimes \Sigma^{\infty}_+ \Omega^{\sigma} S^{\lambda+1} \to \HZ \otimes \Sigma^{\infty}_+ \left( \Omega^{\sigma} S^{\lambda+1} \right)^{\mathcal{L}} \simeq \Sigma^{\rho} \HZ \otimes \Sigma^{\infty}_+ \Omega^{\sigma} S^{\lambda+1}.$$
By \cite[Theorem 4.3]{HillAlgebras}, there is a James splitting
$$\Sigma^{\infty}_+ \Omega^{\sigma} S^{\lambda+1} \simeq \Sigma^{\infty}_+ \Omega^{\sigma} \Sigma^{\sigma} S^{\rho} \simeq S^0 \oplus S^{\rho} \oplus S^{2\rho} \oplus S^{3\rho} \oplus \cdots.$$
In particular, there is a cofiber sequence of $\HZ$-modules
$$\HZ \otimes \Sigma^{\infty}_+ \Omega^{\sigma} S^{\lambda+1} \langle \lambda+1 \rangle \to \bigoplus_{k \ge 0} \Sigma^{k\rho} \HZ \to \bigoplus_{s \ge 1} \Sigma^{s \rho} \HZ,$$
as desired.
\end{proof}

\begin{proof}[Proof of Lemma \ref{app:main-lem}]
By $2$-localizing the result of Lemma \ref{app:red-lem}, we learn that
$$\THR(\HZ_{(2)}) \simeq \HZ_{(2)} \otimes \Sigma^{\infty}_+ \Omega^{\sigma} S^{\lambda+1} \langle \lambda+1 \rangle$$
may be calculated as the fiber of a map $f$ of $\HZ_{(2)}$-module spectra
$$f:\bigoplus_{k \ge 0} \Sigma^{k\rho} \HZ_{(2)}  \to \bigoplus_{s \ge 1} \Sigma^{s \rho} \HZ_{(2)}.$$
Since the domain of $f$ is a direct sum of free $\HZ_{(2)}$-module spectra, $f$ is determined by a sequence of elements
$f_k \in \pi_{k\rho} \left( \bigoplus_{s \ge 1} \Sigma^{s \rho} \HZ_{(2)} \right).$  The $RO(C_2)$-graded homotopy groups of $\HZ$, as nicely displayed for example in \cite[p.6]{GreenlessApproaches}, show that there are no classes in $\pi_{k\rho} (\Sigma^{s\rho}\HZ_{(2)})$ unless $k=s$.  Furthermore, one sees that any such class is determined by its underlying non-equivariant class in $\pi_{2k}(\Sigma^{2k} H\mathbb{Z}_{(2)}) \cong \Z_{(2)}$, and in particular the map $f$ is entirely determined by its underlying non-equivariant map
$$f_{\text{underlying}}:\bigoplus_{k \ge 0} \Sigma^{2k} H\mathbb{Z}_{(2)}   \to \bigoplus_{s \ge 1} \Sigma^{2s} H\mathbb{Z}_{(2)}.$$
The fiber of $f_{\text{underlying}}$ must agree with the known non-equivariant calculation 
$$\mathrm{THH}(H\mathbb{Z}_{(2)}) \simeq H\mathbb{Z}_{(2)} \oplus \bigoplus_{s \ge 1} \Sigma^{2s-1} (H\mathbb{Z}/s)_{(2)},$$
which determines the map $f_{\text{underlying}}$ well enough to determine the fiber of $f$ up to equivalence.
\end{proof}

\begin{proof}[Proof of Theorem \ref{app:main-thm}]
As with any $C_2$-equivariant spectrum, there is a pullback square
$$
\begin{tikzcd}
\THR(\HZ) \arrow{r} \arrow{d} & \THR(\HZ)[\frac{1}{2}]  \arrow{d} \\
\THR(\HZ)_{(2)} \arrow{r} & \THR(\HZ) \otimes H\underline{\mathbb{Q}}.
\end{tikzcd}
$$
The $2$-local Lemma \ref{app:main-lem} allows to calculate the lower left corner
of the square, while the result \cite[Corollary 5.25]{Real_THH} 
of Dotto, Moi, Patchkoria, and Reeh calculates the upper right.  From these results, we learn that the square is a direct sum of squares
$$
\begin{tikzcd}
H\underline{\mathbb{Z}} \arrow{d} \arrow{r} & H\underline{\mathbb{Z}}[1/2] \arrow{d} \\
H\underline{\mathbb{Z}}_{(2)} \arrow{r} & H\underline{\mathbb{Q}},
\end{tikzcd}
$$
and, for all $k \ge 1$,
$$
\begin{tikzcd}
\Sigma^{k \rho-1} H\underline{\mathbb{Z}/k} \arrow{d} \arrow{r} & \Sigma^{k\rho-1} H\underline{\mathbb{Z}/k}[1/2] \arrow{d} \\
\Sigma^{k\rho-1} H\underline{\mathbb{Z}/k}_{(2)} \arrow{r} & 0.
\end{tikzcd}
$$

\end{proof}

\end{appendices}

\bibliography{references}{}
\bibliographystyle{plain}

\end{document}